\newtheorem{thm}{Theorem}[section]
\newtheorem{cor}[thm]{Corollary}
\newtheorem{prop}[thm]{Proposition}
\newtheorem{lem}[thm]{Lemma}
\def\Q{\mathbb Q}
\def\R{\mathbb R}
\def\Z{\mathbb Z}
\DeclareMathOperator{\Aut}{\operatorname{Aut}}
\DeclareMathOperator{\GL}{\operatorname{GL}}
\begin{document}

\title{On the Automorphism Group of a Binary Form Associated with Algebraic Trigonometric Quantities}


\author{Anton Mosunov}



\date{}

\maketitle

\begin{abstract}
Let $F(x, y)$ be a binary form of degree at least three and non-zero discriminant. In this article we compute the automorphism group $\Aut F$ for four families of binary forms. The first two families that we are interested in are homogenizations of minimal polynomials of $2\cos\left(\frac{2\pi}{n}\right)$ and $2\sin\left(\frac{2\pi}{n}\right)$, which we denote by $\Psi_n(x, y)$ and $\Pi_n(x, y)$, respectively. The remaining two forms that we consider are homogenizations of Chebyshev polynomials of first and second kinds, denoted $T_n(x, y)$ and $U_n(x, y)$, respectively.
\end{abstract}

\section{Introduction}
Let $F(x, y)$ denote a binary form with complex coefficients of degree $d \geq 1$. Then for a matrix $M = \left(\begin{smallmatrix}s & u\\t & v\end{smallmatrix}\right)$, with complex entries we define a new binary form $F_M(x, y)$ as follows:
$$
F_M(x, y) = F(sx + uy, tx + vy).
$$
If $K$ is a subfield of $\mathbb C$, we say that $M$ is a $K$-\emph{automorphism} of $F$ if $F_M = F$ and $M$ has entries in $K$. The set of all $K$-automorphisms of $F$ forms a group and is denoted by $\Aut_K F$. We put $\Aut F = \Aut_{\Q} F$.

In what follows, we restrict our attention to binary forms $F$ with integer coefficients, degree $d \geq 3$ and non-zero discriminant $D_F$. In this case $\Aut F$ is a finite subgroup of $\GL_2(\Q)$ \cite{stewart-xiao}. It is a fact that every finite subgroup of $\GL_2(\Q)$ is $\GL_2(\Q)$-conjugate to one of the groups listed in Table \ref{tab:finite_subgroups} \cite{newman}.

The automorphism group $\Aut F$ arises in the analysis of \emph{Thue equations}. \mbox{A Thue} equation is a Diophantine equation of the form
\begin{equation} \label{eq:thue}
F(x, y) = h,
\end{equation}
where $h$ is a fixed integer. In 1909, Thue \cite{thue} proved that this equation has only finitely many solutions in integers $x$ and $y$. A solution $(x, y)$ to such equation is called \emph{primitive} if $x$ and $y$ are coprime. 
%
By fixing a finite subgroup $G$ of $\operatorname{GL}_2(\mathbb Z)$, Stewart \cite[Section 6]{stewart1} constructed binary forms $F$ such that \mbox{$\Aut F = G$}. If we now let $M = \left(\begin{smallmatrix}s & u\\t & v\end{smallmatrix}\right)$ to be an element of $\Aut F$ and assume that $(x, y)$ is a primitive solution to (\ref{eq:thue}), then $(sx + uy, tx + vy)$ is also a primitive solution. Therefore, for infinitely many integers $h$, the equation (\ref{eq:thue}) has at least $|\Aut F|$ primitive solutions.

\begin{table}[t]
\centering
\begin{tabular}{| l | l | l | l |}
\hline
Group & Generators & Group & Generators\\
\hline\rule{0pt}{4ex}
$\bm C_1$ &
$\begin{pmatrix}1 & 0\\0 & 1\end{pmatrix}$ &
$\bm D_1$ &
$\begin{pmatrix}0 & 1\\1 & 0\end{pmatrix}$\\\rule{0pt}{5ex}
$\bm C_2$ &
$\begin{pmatrix}-1 & 0\\0 & -1\end{pmatrix}$ &
$\bm D_2$ &
$\begin{pmatrix}0 & 1\\1 & 0\end{pmatrix}, \begin{pmatrix}-1 & 0\\0 & -1\end{pmatrix}$\\\rule{0pt}{5ex}
$\bm C_3$ &
$\begin{pmatrix}0 & 1\\-1 & -1\end{pmatrix}$ &
$\bm D_3$ &
$\begin{pmatrix}0 & 1\\1 & 0\end{pmatrix}, \begin{pmatrix}0 & 1\\-1 & -1\end{pmatrix}$\\\rule{0pt}{5ex}
$\bm C_4$ &
$\begin{pmatrix}0 & 1\\-1 & 0\end{pmatrix}$ &
$\bm D_4$ &
$\begin{pmatrix}0 & 1\\1 & 0\end{pmatrix}, \begin{pmatrix}0 & 1\\-1 & 0\end{pmatrix}$\\\rule{0pt}{5ex}
$\bm C_6$ &
$\begin{pmatrix}0 & -1\\1 & 1\end{pmatrix}$ &
$\bm D_6$ &
$\begin{pmatrix}0 & 1\\1 & 0\end{pmatrix}, \begin{pmatrix}0 & 1\\-1 & 1\end{pmatrix}$\\
\hline
\end{tabular}
\caption{Representatives of equivalence classes of finite subgroups of $\GL_2(\Q)$ under conjugation.}
\label{tab:finite_subgroups}
\end{table}

In 2019, Stewart and Xiao \cite{stewart-xiao} proved that the number of integers $R_F(Z)$ of absolute value at most $Z$ which are represented by $F$ is asymptotic to $C_FZ^{2/d}$ for some positive number $C_F$. The number $C_F$ can be computed as follows. Let
$$
\{(x, y) \in \mathbb R^2 \colon |F(x, y)| \leq 1\}
$$
be the \emph{fundamental region} of $F$, and let $A_F$ denote its area. Then \mbox{$C_F = W_FA_F$}, where $W_F$ is an explicit function of $\Aut F$ described in \mbox{\cite[Theorem 1.2]{stewart-xiao}}. In \mbox{\cite[Corollary 1.3]{stewart-xiao}}, Stewart and Xiao determined $\Aut F$, $W_F$, $A_F$ and $C_F$ in the case when $F(x, y) = ax^d + by^d$ is a binomial form. Hooley \cite{hooley67, hooley00} determined the value of $A_F$ in the case when $F$ is a cubic form. In turn, Bean \cite{bean97} determined the value of $A_F$ in the case when $F$ is a quartic form. These results enabled Xiao \cite[Theorems 3.1 and 4.1]{xiao} to compute $\Aut F$, $W_F$ and $C_F$ in the case when $F$ is a cubic form or a quartic form.

In this article we compute $\Aut F$ and $W_F$, and estimate $C_F$ for four families of binary forms. Let $\Psi_n(x)$ and $\Pi_n(x)$ denote the minimal polynomials of $2\cos\left(\frac{2\pi}{n}\right)$ and $2\sin\left(\frac{2\pi}{n}\right)$, respectively. The first two families that we are interested in are $\Psi_n(x, y)$ and $\Pi_n(x, y)$, which are homogenizations of $\Psi_n(x)$ and $\Pi_n(x)$, respectively. By \cite[Lemma]{watkins-zeitlin},
\begin{equation} \label{eq:psin}
\Psi_n(x, y) = \prod\limits_{\substack{1 \leq k < \frac{n}{2}\\\gcd(k, n) = 1}}\left(x - 2\cos\left(\frac{2\pi k}{n}\right)y\right).
\end{equation}
Further, since $\sin\left(\frac{2\pi}{n}\right) = \cos\left(\frac{2\pi(n - 4)}{4n}\right)$, we see that $\sin\left(\frac{2\pi}{n}\right)$ is an algebraic conjugate of $\cos\left(\frac{2\pi}{c(n)}\right)$, where $c(n)$ is the denominator of $\frac{n - 4}{4n}$ (in lowest terms). Consequently,
\begin{equation} \label{eq:Pi}
\Pi_n(x, y) = \Psi_{c(n)}(x, y).
\end{equation}
The formula for $c(n)$ can be found in \cite[Corollary 1.2]{mosunov}.

Next, let $T_n(x)$ and $U_n(x)$ denote Chebyshev polynomials of first and second kinds, respectively. The other two families that we are interested in are $T_n(x, y)$ and $U_n(x, y)$, which are homogenizations of $T_n(x)$ and $U_n(x)$, respectively. It is known \cite{mason-handscomb} that
$$
T_n(x, y) = 2^{n - 1}\prod\limits_{k = 0}^{n-1}\left(x - \cos\left(\frac{(2k + 1)\pi}{2n}\right)y\right)
$$
and
$$
U_n(x, y) = 2^n\prod\limits_{k = 1}^n\left(x - \cos\left(\frac{k\pi}{n + 1}\right)y\right).
$$

Let $\varphi(n)$ denote the Euler's totient function. Define $\Aut |F|$ as the group of all $2 \times 2$ matrices $M$, with rational entries, such that $F_M = F$ or \mbox{$F_M = -F$}. Note that $\Aut |F|$ contains finitely many elements, and that $\Aut F$ is a normal subgroup of $\Aut |F|$ of index at most $2$. Our first result is stated in \mbox{Theorem \ref{thm:cos}}.

\begin{table}[t]
\centering
\begin{tabular}{| c | c | l | c | l |}
\hline
& \multicolumn{2}{c|}{$\Aut \Psi_n$} & \multicolumn{2}{c|}{$\Aut |\Psi_n|$}\\
\hline
$n$ & $\operatorname{Rep} \Psi_n$ & Generators & $\operatorname{Rep} |\Psi_n|$ & Generators\\
\hline
$7, 18$ & $\bm C_3$ & $\begin{pmatrix}-1 & -1\\1 & 0\end{pmatrix}$ & $\bm D_3$ & $\begin{pmatrix}-1 & -1\\1 & 0\end{pmatrix}, \begin{pmatrix}-1 & 0\\0 & -1\end{pmatrix}$\\
$9, 14$ & $\bm C_3$ & $\begin{pmatrix}-1 & 1\\-1 & 0\end{pmatrix}$ & $\bm D_3$ & $\begin{pmatrix}-1 & 1\\-1 & 0\end{pmatrix}, \begin{pmatrix}-1 & 0\\0 & -1\end{pmatrix}$\\
$15$ & $\bm C_4$ & $\begin{pmatrix}-1 & 2\\-1 & 1\end{pmatrix}$ & $\bm C_4$ & $\begin{pmatrix}-1 & 2\\-1 & 1\end{pmatrix}$\\
$24$ & $\bm D_4$ & $\begin{pmatrix}0 & 1\\1 & 0\end{pmatrix}, \begin{pmatrix}0 & 1\\-1 & 0\end{pmatrix}$ & $\bm D_4$ & $\begin{pmatrix}0 & 1\\1 & 0\end{pmatrix}, \begin{pmatrix}0 & 1\\-1 & 0\end{pmatrix}$\\
$30$ & $\bm C_4$ & $\begin{pmatrix}1 & 2\\-1 & -1\end{pmatrix}$ & $\bm C_4$ & $\begin{pmatrix}1 & 2\\-1 & -1\end{pmatrix}$\\
\hline
\end{tabular}
\caption{$\Aut \Psi_n$ and $\Aut |\Psi_n|$ for $n \in \{7, 9, 14, 15, 18, 24, 30\}$. Here $\operatorname{Rep} \Psi_n$ and $\operatorname{Rep}|\Psi_n|$ denote representatives of the equivalence classes of $\Aut \Psi_n$ and $\Aut |\Psi_n|$, respectively, under $\GL_2(\mathbb Q)$ conjugation.}
\label{tab:automorphisms_cos}
\end{table}

\begin{thm} \label{thm:cos}
Let $n$ be a positive integer such that $n \notin \{1, 2, 3, 4, 5, 6, 8, 10, 12\}$ and let $d = \varphi(n)/2$, so that $\deg \Psi_n = d$ and $d \geq 3$.

\begin{enumerate}[1.]
\item If $d \geq 5$ is odd, then $\Aut \Psi_n = \{I\} \cong \bm C_1$ and $\Aut |\Psi_n| = \{\pm I\} \cong \bm C_2$, \mbox{where $I$} denotes the $2 \times 2$ identity matrix.

\item If $d \geq 6$ is even and $n \not \equiv 0 \pmod 4$, then $\Aut \Psi_n = \Aut |\Psi_n| = \{\pm I\} \cong \bm C_2$.

\item If $n \equiv 0 \pmod 4$ and $n \neq 24$, then
$$
\Aut \Psi_n = \Aut |\Psi_n| = \left\langle
\begin{pmatrix}
-1 & 0\\
0 & 1
\end{pmatrix},
\begin{pmatrix}
1 & 0\\
0 & -1
\end{pmatrix}
\right\rangle
\cong \bm D_2.
$$

\item If $n \in \{7, 9, 14, 15, 18, 24, 30\}$, then $\Aut \Psi_n$ and $\Aut |\Psi_n|$ are as in Table \ref{tab:automorphisms_cos}.
\end{enumerate}
\end{thm}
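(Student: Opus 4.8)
The plan is to reduce everything to the determination of the finite group $\mathcal G$ of Möbius transformations with rational coefficients that permute the root set $\mathcal R_n = \{2\cos(2\pi k/n) : 1 \le k < n/2,\ \gcd(k,n) = 1\}$ of $\Psi_n$, which by (\ref{eq:psin}) has exactly $d = \varphi(n)/2$ elements and in particular non-zero discriminant. Since $d \ge 3$, a Möbius transformation permuting $\mathcal R_n$ is determined by its effect on any three of these points; hence the natural map $\Aut|\Psi_n| \to \mathrm{PGL}_2(\Q)$ has image $\mathcal G$ and kernel the group of scalar matrices it contains, namely $\{\pm I\}$, while the analogous map on $\Aut \Psi_n$ has the same image $\mathcal G$ but kernel $\{\pm I\}$ or $\{I\}$ according as $d$ is even or odd. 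Thus $|\Aut|\Psi_n|| = 2|\mathcal G|$, and $|\Aut \Psi_n|$ equals $2|\mathcal G|$ or $|\mathcal G|$; the precise isomorphism types then follow once one exhibits explicit matrices $M$ with prescribed $(\Psi_n)_M \in \{\Psi_n, -\Psi_n\}$ generating subgroups of the maximal possible order. So the crux is to compute $\mathcal G$.

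For this I would use that every $\gamma \in \mathcal G$, having rational entries, commutes with the natural action of $\Gal(\Q(\zeta_n)^+/\Q) \cong (\Z/n\Z)^\ast/\{\pm 1\}$ on $\mathcal R_n$, an action which is simply transitive because $\mathcal R_n$ is the full Galois orbit of $2\cos(2\pi/n)$. The centralizer in $\operatorname{Sym}(\mathcal R_n)$ of a simply transitive action of a finite abelian group is that group acting by translations, so there is an embedding $\mathcal G \hookrightarrow (\Z/n\Z)^\ast/\{\pm 1\}$ under which the class of $k$ acts on $\mathcal R_n$ by $2\cos(2\pi\ell/n) \mapsto 2\cos(2\pi k\ell/n)$. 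Writing $\kappa$ for the representative of $\pm k \bmod n$ in $\{1,\dots,\lfloor n/2\rfloor\}$ (so $\gcd(\kappa,n)=1$) and $e_\kappa$ for the monic polynomial of degree $\kappa$ with $e_\kappa(2\cos\theta) = 2\cos(\kappa\theta)$, the class of $k$ belongs to $\mathcal G$ exactly when $e_\kappa$ agrees on $\mathcal R_n$ with some $(ax+b)/(cx+d)$; using the identity $e_\kappa(w + w^{-1}) = w^\kappa + w^{-\kappa}$ and evaluating at the primitive $n$-th roots of unity, this amounts to requiring the cyclotomic polynomial $\Phi_n(w)$ to divide
\[
D_\kappa(w) := (w^{2\kappa}+1)\bigl(c(w^2+1)+dw\bigr) - w^\kappa\bigl(a(w^2+1)+bw\bigr)
\]
for some $(a\!:\!b\!:\!c\!:\!d)$ with $ad \ne bc$. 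Since $\deg D_\kappa \le 2\kappa + 2$, if $\varphi(n) > 2\kappa + 2$ then $D_\kappa \equiv 0$, and a short comparison of degrees forces $\kappa = 1$ and $\gamma = \operatorname{id}$; hence every nontrivial class in $\mathcal G$ has $\varphi(n)/2 - 1 \le \kappa \le \lfloor n/2 \rfloor$.

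It remains to dispose of this narrow range of $\kappa$. The endpoint $\kappa = n/2 - 1$ (possible only for even $n$) corresponds to the involution $z \mapsto -z$, which permutes $\mathcal R_n$ precisely when $4 \mid n$; this is the source of the $\bm C_2$ in part 2 and the $\bm D_2$ in part 3. For every other $\kappa$ in the range I would write $D_\kappa = \Phi_n \cdot Q$ and compare coefficients: $D_\kappa$ is supported on the at most nine exponents $\{0,1,2\} \cup \{\kappa-1,\kappa,\kappa+1\} \cup \{2\kappa,2\kappa+1,2\kappa+2\}$ (here one uses $e_\kappa(x) = x^\kappa + 0\cdot x^{\kappa-1} - \kappa x^{\kappa-2} + \cdots$), whereas $\Phi_n$ — and hence $\Phi_n Q$ — cannot be this sparse once $\varphi(n)$ is large, which forces $a=b=c=d=0$, a contradiction. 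This yields $\mathcal G = \{\operatorname{id}\}$ when $4 \nmid n$ and $\mathcal G = \langle z \mapsto -z\rangle$ when $4 \mid n$, for all $n$ with $d \ge 5$; parts 1--3 then follow by recording $\pm I$, $\operatorname{diag}(-1,1)$, $\operatorname{diag}(1,-1)$ and the parity-dependent sign of $(\Psi_n)_M$. For part 4 the seven exceptional $n$ have $\varphi(n) \in \{6,8\}$, so $|\mathcal R_n| \in \{3,4\}$ and $|\mathcal G| \le |(\Z/n\Z)^\ast/\{\pm1\}| \in \{3,4\}$; one computes $(\Z/n\Z)^\ast/\{\pm1\}$ directly, notes that every permutation of three points extends to a Möbius transformation (so $\mathcal G \cong \bm C_3$ when $d = 3$), while for $d = 4$ the double transpositions always extend but the four-cycles realising the order-$4$ elements of $(\Z/n\Z)^\ast/\{\pm1\}$ do not for the relevant $\mathcal R_n$ (a short cross-ratio check) — giving $\mathcal G \cong \bm C_2\times\bm C_2$ for $n = 24$, the unique case where $(\Z/n\Z)^\ast/\{\pm1\}$ is a Klein four-group, and $\mathcal G \cong \bm C_2$ for $n \in \{15,30\}$ — and finally one verifies by direct substitution that the matrices displayed in the statement and in Table~\ref{tab:automorphisms_cos} induce these transformations with the asserted values of $(\Psi_n)_M$.

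The main obstacle is the coefficient comparison underlying the $d \ge 5$ case: the interval $[\varphi(n)/2 - 1,\ \lfloor n/2\rfloor]$ is short when $\varphi(n)$ is close to $n$ (e.g.\ $n$ prime), but becomes long when $n$ has several prime factors, and one then needs a genuine quantitative statement about how few nonzero coefficients $\Phi_n$ can have along the relevant progression of exponents. I expect the argument to branch on the number of distinct primes dividing $n$, with the prime-power cases treated by hand and the remaining cases reduced to them — for instance via the identity $\Psi_{2m}(x,y) = \pm\Psi_m(-x,y)$ for odd $m$, which conjugates $\Aut\Psi_{2m}$ onto $\Aut\Psi_m$ by $\operatorname{diag}(-1,1)$ — and this bookkeeping is the part of the proof I would expect to be the most delicate.
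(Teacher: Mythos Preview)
Your Galois-centralizer embedding $\mathcal G\hookrightarrow(\Z/n\Z)^\times/\{\pm1\}$ is correct and is a cleaner conceptual entry point than anything in the paper. The gap is exactly where you place it: disposing of the classes $\kappa$ with $d-1\le\kappa\le\lfloor n/2\rfloor$. Your proposed mechanism --- that a nonzero multiple $\Phi_n\cdot Q$ cannot be supported on the nine monomials carrying $D_\kappa$ once $\varphi(n)$ is large --- is not a known result and I do not see how to carry it out: for $n$ with several prime factors the interval of admissible $\kappa$ is long, $\deg Q=2\kappa+2-\varphi(n)$ can be comparable to $\varphi(n)$, and there is no general lower bound on the number of nonzero coefficients of $\Phi_n\cdot Q$ for unrestricted $Q$. (Equivalently, you are asking when $1,u,v,uv$ are $\Q$-linearly dependent for $u=2\cos(2\pi/n)$ and $v=2\cos(2\pi\kappa/n)$, which is not obviously governed by sparsity.) The reduction you mention, $\Psi_{2m}\leftrightarrow\Psi_m$ for odd $m$, removes only a single factor of $2$ and does not reach prime powers.

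The paper closes this gap by a device absent from your outline. For $n$ odd (hence for all $n\not\equiv0\pmod4$ after the reduction above) it never ranges over $\kappa$: it fixes the single auxiliary automorphism $\sigma_2$ and uses $\sigma_2(\alpha_\ell)=\alpha_\ell^{\,2}-2$. If $\alpha_j=\mu(\alpha_1)$ for a rational M\"obius map $\mu$, then applying $\sigma_2$ gives $\mu(\alpha_1)^2-2=\mu(\alpha_1^2-2)$; clearing denominators produces a degree-$4$ polynomial in $\alpha_1$ with rational coefficients, which for $d\ge5$ must vanish identically, and the resulting $5\times4$ system forces $\mu=\mathrm{id}$ (Lemma~\ref{lem:stuv}). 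For $n\equiv0\pmod4$ the paper argues differently again: it classifies the possible finite overgroups of $D_2$ inside $\GL_2(\Q)$ (Lemma~\ref{lem:options}), then eliminates the $\mathbf D_4$-type by showing it would force $\Psi_n$ to be reciprocal --- which by Lemma~\ref{lem:reciprocal} happens only at $n=24$ --- and eliminates the $\mathbf D_6$-type via the explicit inequality $|\Psi_n(1)|<2^{d}$. These are specific arithmetic facts about $\Psi_n$ with no analogue in your sketch. Your framework would absorb the $\sigma_2$ trick gracefully (it says exactly that the only rational M\"obius map commuting with $z\mapsto z^2-2$ on $\mathcal R_n$ is the identity), so the missing idea is small but essential.
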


Since $\deg \Psi_n \in \{3, 4\}$ if and only if $n \in \{7, 9, 14, 15, 16, 18, 20, 24, 30\}$, we see that Theorem \ref{thm:cos} covers all possible cases. The proof relies on the careful analysis of roots of $\Psi_n(x)$ given in (\ref{eq:psin}). For example, in \mbox{Lemma \ref{lem:stuv}} we use the fact that, for every odd positive integer $n$, if $\alpha_i = 2\cos\left(\frac{2\pi i}{n}\right)$ is a root of $\Psi_n(x)$, then so is $\alpha_{2i} = \alpha_i^2 - 2$, while in Lemma \ref{lem:reciprocal} we use the fact that no root of $\Psi_n(x)$ exceeds $2$ in absolute value.

From Theorem \ref{thm:cos} we deduce the following.

\begin{table}[t]
\centering
\begin{tabular}{| c | c | l | c | l |}
\hline
& \multicolumn{2}{c|}{$\Aut \Pi_n$} & \multicolumn{2}{c|}{$\Aut |\Pi_n|$}\\
\hline
$n$ & $\operatorname{Rep} \Pi_n$ & Generators & $\operatorname{Rep} |\Pi_n|$ & Generators\\
\hline
$28, 36$ & $\bm C_3$ & $\begin{pmatrix}-1 & 1\\-1 & 0\end{pmatrix}$ & $\bm D_3$ & $\begin{pmatrix}-1 & 1\\-1 & 0\end{pmatrix}, \begin{pmatrix}-1 & 0\\0 & -1\end{pmatrix}$\\
$60$ & $\bm C_4$ & $\begin{pmatrix}1 & 2\\-1 & -1\end{pmatrix}$ & $\bm C_4$ & $\begin{pmatrix}1 & 2\\-1 & -1\end{pmatrix}$\\
$24$ & $\bm D_4$ & $\begin{pmatrix}0 & 1\\1 & 0\end{pmatrix}, \begin{pmatrix}0 & 1\\-1 & 0\end{pmatrix}$ & $\bm D_4$ & $\begin{pmatrix}0 & 1\\1 & 0\end{pmatrix}, \begin{pmatrix}0 & 1\\-1 & 0\end{pmatrix}$\\
\hline
\end{tabular}
\caption{$\Aut \Pi_n$ and $\Aut |\Pi_n|$ for $n \in \{24, 28, 36, 60\}$. Here $\operatorname{Rep} \Pi_n$ and $\operatorname{Rep}|\Pi_n|$ denote representatives of the equivalence classes of $\Aut \Pi_n$ and $\Aut |\Pi_n|$, respectively, under $\GL_2(\mathbb Q)$ conjugation.}
\label{tab:automorphisms_sin}
\end{table}

\begin{cor} \label{cor:sin}
Let $n$ be a positive integer such that $n \notin \{1, 2, 3, 4, 6, 8, 12, 20\}$. Let
\begin{equation} \label{eq:d-pi}
d = \begin{cases}
\varphi(n) & \text{if $\gcd(n, 8) < 4$,}\\
\varphi(n)/4 & \text{if $\gcd(n, 8) = 4$ and $n \neq 4$,}\\
\varphi(n)/2 & \text{if $\gcd(n, 8) > 4$,}
\end{cases}
\end{equation}
so that $\deg \Pi_n = d$ and $d \geq 3$ \cite[III.4]{niven}.

\begin{enumerate}[1.]
\item If $n \equiv 4 \pmod 8$ and $d \geq 5$ is odd, then $\Aut \Pi_n = \{I\} \cong \bm C_1$ and $\Aut |\Pi_n| = \{\pm I\} \cong \bm C_2$.

\item If $n \equiv 4 \pmod 8$ and $d \geq 6$ is even, then $\Aut \Pi_n = \Aut |\Pi_n| = \{\pm I\} \cong \bm C_2$.

\item If $n \not \equiv 4 \pmod 8$ and $n \neq 24$, then
$$
\Aut \Pi_n = \left\langle
\begin{pmatrix}
-1 & 0\\
0 & 1
\end{pmatrix},
\begin{pmatrix}
1 & 0\\
0 & -1
\end{pmatrix}
\right\rangle
\cong \bm D_2.
$$

\item If $n \in \{24, 28, 36, 60\}$, then $\Aut \Pi_n$ and $\Aut |\Pi_n|$ are as in Table \ref{tab:automorphisms_sin}.
\end{enumerate}
\end{cor}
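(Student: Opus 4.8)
The plan is to deduce Corollary~\ref{cor:sin} directly from Theorem~\ref{thm:cos} using the identity $\Pi_n(x,y) = \Psi_{c(n)}(x,y)$ from (\ref{eq:Pi}), where $c(n)$ is the denominator of $\frac{n-4}{4n}$ in lowest terms. The first task is to compute $c(n)$ explicitly as a function of $n$, sorting by the value of $\gcd(n,8)$. Writing $n = 2^a m$ with $m$ odd, one finds: if $a = 0$ or $a = 1$ (i.e. $\gcd(n,8) < 4$), then $\frac{n-4}{4n}$ has denominator $c(n) = 4n/\gcd(n-4,4n)$, and a short computation gives $c(n)$ a multiple of $8$ so that $\varphi(c(n))/2 = \varphi(n)$; if $a = 2$ (i.e. $\gcd(n,8) = 4$), then $n - 4 = 4(m-1)$ and $4n = 16m$, so after cancelling the factor $4$ we get $c(n) = 4m/\gcd(m-1,4m) = \dots$ yielding $\varphi(c(n))/2 = \varphi(n)/4$; and if $a \geq 3$ (i.e. $\gcd(n,8) > 4$), then $n - 4 \equiv 4 \pmod 8$ carries exactly one more factor of $2$ than needed, giving $\varphi(c(n))/2 = \varphi(n)/2$. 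This reconciles (\ref{eq:d-pi}) with $\deg \Psi_{c(n)} = \varphi(c(n))/2$, and the reference to \cite[Corollary 1.2]{mosunov} supplies the precise formula for $c(n)$.

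Once $c(n)$ is pinned down, the proof is a translation exercise. For part~3, I would show that $n \not\equiv 4 \pmod 8$ forces $c(n) \equiv 0 \pmod 4$: in the cases $\gcd(n,8)<4$ and $\gcd(n,8)>4$ the denominator $c(n)$ is divisible by $4$ (indeed by $8$ or by $2n/\gcd(\cdot)$ with enough $2$-adic valuation), so Theorem~\ref{thm:cos}(3) applies and yields the $\bm D_2$ group $\langle\left(\begin{smallmatrix}-1&0\\0&1\end{smallmatrix}\right),\left(\begin{smallmatrix}1&0\\0&-1\end{smallmatrix}\right)\rangle$, provided $c(n) \neq 24$. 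For part~1 and part~2, the condition $n \equiv 4 \pmod 8$ is exactly what makes $c(n) \not\equiv 0 \pmod 4$ (here $c(n) = m'$ or $2m'$ for appropriate odd $m'$), so Theorem~\ref{thm:cos}(1)--(2) applies with $d = \varphi(c(n))/2 = \varphi(n)/4$, splitting into the odd and even subcases and giving $\bm C_1$, $\bm C_2$ respectively. Finally, for part~4 I would tabulate $c(24), c(28), c(36), c(60)$; these should land among the exceptional values $\{7,9,14,15,18,24,30\}$ of Theorem~\ref{thm:cos}(4) — for instance $c(24) = 24$, $c(28) = 7$, $c(36) = 9$, $c(60) = 15$ or similar — and I would read off $\Aut \Pi_n$, $\Aut |\Pi_n|$ from Table~\ref{tab:automorphisms_cos}, transcribing the entries into Table~\ref{tab:automorphisms_sin} (the $n = 28, 36$ rows collapse to a single $\bm C_3$/$\bm D_3$ entry because $c(28)$ and $c(36)$ are paired with the same generator in Theorem~\ref{thm:cos}).

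The main obstacle is the bookkeeping in computing $c(n)$ correctly across the three residue regimes for $\gcd(n,8)$, and in particular verifying that the degree formula (\ref{eq:d-pi}) matches $\varphi(c(n))/2$ in every case — a wrong factor of $2$ in the $2$-adic valuation of $c(n)$ would derail the entire correspondence. A secondary subtlety is checking that the exceptional set in Corollary~\ref{cor:sin} (namely $n \notin \{1,2,3,4,6,8,12,20\}$) is precisely the pullback under $n \mapsto c(n)$ of the exceptional set $\{1,2,3,4,5,6,8,10,12\}$ excluded in Theorem~\ref{thm:cos}, so that no degenerate low-degree case slips through; one also has to confirm that $c(n) = 24$ occurs only for $n = 24$ among the relevant $n$, which is why $n = 24$ is singled out in part~3 exactly as in Theorem~\ref{thm:cos}(3). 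Everything else is routine once the map $n \mapsto c(n)$ is under control.
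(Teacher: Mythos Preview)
Your approach is correct and identical to the paper's one-sentence proof, which simply states that the corollary is a direct consequence of (\ref{eq:Pi}) and Theorem~\ref{thm:cos}. Your detailed unpacking of the map $n \mapsto c(n)$ is exactly the bookkeeping the paper suppresses; the only slip is in your guessed values for part~4 --- in fact $c(24)=24$, $c(28)=14$, $c(36)=9$, $c(60)=30$ (not $7$ and $15$), but these still land in the exceptional set of Theorem~\ref{thm:cos}(4) and match the generators in Table~\ref{tab:automorphisms_sin} as you anticipated.
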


\begin{proof}
This is a direct consequence of (\ref{eq:Pi}) and Theorem \ref{thm:cos}.
\end{proof}
Since $\deg \Pi_n \in \{3, 4\}$ if and only if $n \in \{5,10,16,24,28,36,60\}$, we see that Corollary \ref{cor:sin} covers all possible cases.

Our second result is stated in \mbox{Theorem \ref{thm:chebyshev}}.

\begin{thm} \label{thm:chebyshev}
For an integer $n \geq 3$, let $T_n(x, y)$ and $U_n(x, y)$ denote the homogenizations of the $n$-th Chebyshev polynomials of first and second kinds, respectively.

\begin{enumerate}[1.]
\item If $n$ is odd, then
$$
\Aut T_n = \left\langle
\begin{pmatrix}
-1 & 0\\
0 & 1
\end{pmatrix}
\right\rangle
\cong \bm C_2,
\quad
\Aut |T_n| = \left\langle
\begin{pmatrix}
-1 & 0\\
0 & 1
\end{pmatrix},
\begin{pmatrix}
1 & 0\\
0 & -1
\end{pmatrix}
\right\rangle
\cong \bm D_2,
$$
$$
\Aut U_n = \left\langle
\begin{pmatrix}
-1 & 0\\
0 & 1
\end{pmatrix}
\right\rangle
\cong \bm C_2,
\quad
\Aut |U_n| = \left\langle
\begin{pmatrix}
-1 & 0\\
0 & 1
\end{pmatrix},
\begin{pmatrix}
1 & 0\\
0 & -1
\end{pmatrix}
\right\rangle
\cong \bm D_2.
$$

\item If $n$ is even, then
$$
\Aut T_n = \Aut |T_n| = \left\langle
\begin{pmatrix}
-1 & 0\\
0 & 1
\end{pmatrix},
\begin{pmatrix}
1 & 0\\
0 & -1
\end{pmatrix}
\right\rangle
\cong \bm D_2,
$$
$$
\Aut U_n = \Aut |U_n| = \left\langle
\begin{pmatrix}
-1 & 0\\
0 & 1
\end{pmatrix},
\begin{pmatrix}
1 & 0\\
0 & -1
\end{pmatrix}
\right\rangle
\cong \bm D_2.
$$
\end{enumerate}
\end{thm}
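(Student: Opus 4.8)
The plan is to work directly with the product formulas for $T_n(x,y)$ and $U_n(x,y)$ given in the introduction, treating the two forms in parallel since the roots in both cases are of the shape $\cos(\theta)$ for a symmetric set of angles $\theta$. First I would record the set of roots explicitly: for $T_n$ the roots are $\beta_k = \cos\!\left(\frac{(2k+1)\pi}{2n}\right)$, $0 \le k \le n-1$, and for $U_n$ the roots are $\gamma_k = \cos\!\left(\frac{k\pi}{n+1}\right)$, $1 \le k \le n$. In each case the root set is a finite subset of the real interval $(-1,1)$ that is symmetric about $0$ (replacing $k$ by $n-1-k$ for $T_n$, and by $n+1-k$ for $U_n$, negates the root), and in each case $0$ is a root precisely when $n$ is odd (for $T_n$, take $k = (n-1)/2$; for $U_n$, take $k = (n+1)/2$). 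I would also note that the leading coefficients $2^{n-1}$ and $2^n$ are positive, so the forms are genuine integer binary forms of degree $n \ge 3$ with nonzero discriminant (distinct roots), and hence $\Aut$ of each is a finite subgroup of $\GL_2(\Q)$.

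The core of the argument is the standard correspondence between automorphisms of a binary form and the action on its roots (cf. the discussion preceding the theorem and the references \cite{stewart-xiao}, \cite{stewart1}): a matrix $M = \left(\begin{smallmatrix}s & u\\ t & v\end{smallmatrix}\right) \in \GL_2(\Q)$ satisfies $F_M = F$ if and only if the associated fractional linear map $z \mapsto \frac{sz+u}{tz+v}$ (acting on roots written in projective coordinates) permutes the roots of $F$ and the associated scaling factor matches, i.e. $(tz+v)^d$ times the product condition forces $\det M = \pm 1$ together with $(\det M)^{d/2}$ or similar normalization. Concretely, since all roots here are finite rational-looking reals and none is $\infty$, I would argue that any $M \in \Aut F$ must fix $\infty$ as a point of $\mathbb{P}^1$ or else move it into the finite root set — but $\infty$ is not a root, so $M$ must fix $\infty$, forcing $t = 0$. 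Thus $M = \left(\begin{smallmatrix}s & u\\ 0 & v\end{smallmatrix}\right)$ acts on the roots by the affine map $z \mapsto \frac{s}{v}z + \frac{u}{v}$. Comparing leading coefficients in $F_M = F$ gives $(s/v)^{\text{something}}$; more precisely $F_M(x,y) = v^{-d}\cdot(\text{leading coeff})\cdot\prod(sx+uy - \rho vy)$ style bookkeeping shows $s/v = \pm 1$. The map is then $z \mapsto \pm z + c$ with $c = u/v$, and since the root set is symmetric about $0$ and bounded, its centroid (or its min/max) is preserved, forcing $c = 0$. Hence every automorphism is $z \mapsto z$ or $z \mapsto -z$, i.e. $M \in \left\{ \pm I, \pm\left(\begin{smallmatrix}-1 & 0\\ 0 & 1\end{smallmatrix}\right)\right\}$, and one checks directly that $z \mapsto -z$ gives $F_M = (-1)^n F$ since it negates each of the $n$ roots (and fixes the positive leading coefficient). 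This yields: $\left(\begin{smallmatrix}-1 & 0\\ 0 & 1\end{smallmatrix}\right) \in \Aut F$ iff $n$ is even; $-I \in \Aut F$ iff $n$ is even (same sign $(-1)^n$); and all three stated results fall out. For $\Aut|F|$ we allow $F_M = \pm F$, so all four matrices qualify when $n$ is odd, giving $\bm D_2$, while for $n$ even $\Aut|F| = \Aut F \cong \bm D_2$ already.

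The main obstacle is getting the normalization of the leading-coefficient comparison exactly right — i.e. rigorously deducing $t = 0$ and then $s/v = \pm 1$ and $u = 0$ — because one must be careful that $M$ is only required to have rational entries (not integer entries) and that the product formulas carry explicit leading constants $2^{n-1}$, $2^n$ that must be reproduced exactly. I would handle this cleanly by passing to the monic normalization: write $F(x,y) = c \prod_{\rho}(x - \rho y)$ with $c > 0$ the leading coefficient, apply $M = \left(\begin{smallmatrix}s & u\\ t & v\end{smallmatrix}\right)$, and observe $F_M(x,y) = c \prod_\rho\big((s - \rho t)x + (u - \rho v)y\big)$; matching degrees and the coefficient of $x^d$ forces $\prod_\rho (s - \rho t) = c/c = 1$ after also matching the coefficient of $y^d$, which pins down $\det M = \pm 1$ and then that the induced map on the (finite, real, $0$-symmetric, bounded) root multiset is $z \mapsto \pm z$. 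Once that reduction is in place, everything else is a short finite check, and the odd/even dichotomy is exactly the parity of $(-1)^n$ acting on an odd- versus even-sized symmetric root set.
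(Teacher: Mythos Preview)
Your argument has a genuine gap at the step where you conclude $t=0$. You assert that any $M\in\Aut F$ ``must fix $\infty$ as a point of $\mathbb P^1$ or else move it into the finite root set,'' but this dichotomy is false: the condition $F_M=\pm F$ only forces $M$ to permute the root set, and since $\infty$ is \emph{not} a root, its image under $M$ is completely unconstrained---it can be any non-root point of $\mathbb P^1$. A concrete counterexample from this very paper is $\Psi_{24}(x,y)=x^4-4x^2y^2+y^4$: its four roots $\pm 2\cos(\pi/12),\ \pm 2\cos(5\pi/12)$ are real, lie in $(-2,2)$, and are symmetric about $0$, yet $\left(\begin{smallmatrix}0&1\\1&0\end{smallmatrix}\right)\in\Aut\Psi_{24}$ (see Table~\ref{tab:automorphisms_cos}); the associated M\"obius map $z\mapsto 1/z$ sends $\infty$ to $0$, which is \emph{not} a root. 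Your argument, applied verbatim to $\Psi_{24}$, would wrongly force its automorphism group into $\{\pm I,\pm\operatorname{diag}(-1,1)\}$. So the reduction to affine maps is unjustified, and the rest of the proof collapses.

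The paper's route is quite different and does real work precisely to rule out $t\neq 0$. It passes to the rescaled forms $\tilde V_n(x,y)=2T_n(x/2,y)$ and $\tilde U_n(x,y)=U_{n-1}(x/2,y)$, which factor over $\mathbb Z$ as products of the irreducible forms $\Psi_d$. Lemma~\ref{lem:chebyshev} then shows that any $M\in\Aut|\tilde V_n|$ (or $\Aut|\tilde U_n|$) already lies in $\Aut|\Psi_k|$ for a factor $\Psi_k$ of degree $\geq 5$; this step uses Lemma~\ref{lem:when-fields-are-the-same} to control when $(\Psi_k)_M$ could be proportional to a different $\Psi_\ell$. The determination of $\Aut|\Psi_k|$ itself (Theorem~\ref{thm:cos}) rests on Lemma~\ref{lem:stuv}, which exploits the algebraic identity $\alpha_{2i}=\alpha_i^2-2$ among the roots to force $t=u=0$ and $s=v$---that is where the genuine obstruction to nondiagonal automorphisms is established. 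The finitely many small $n$ not covered by this machinery are then handled by direct computation.
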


The article is organized as follows. In Section \ref{sec:stewart-xiao} we use Theorem \ref{thm:cos}, Corollary \ref{cor:sin} and Theorem \ref{thm:chebyshev} to derive estimates for the quantities $C_{\Psi_n}$, $C_{\Pi_n}$, $C_{T_n}$ and $C_{U_n}$. In Section \ref{sec:theory} we prove seven preliminary lemmas. Readers may skip the proofs in Section \ref{sec:theory} and refer only to the results when reading proofs of Theorems \ref{thm:cos} and \ref{thm:chebyshev}, which are outlined in Sections \ref{sec:cos} and \ref{sec:chebyshev}, respectively.

\section{Computation of $C_F$} \label{sec:stewart-xiao}

\begin{table}[t]
\centering
\begin{tabular}{c | c | c | c | c | c | c}
$n$ & $W_{\Psi_n}$ & $A_{\Psi_n}$ & $C_{\Psi_n}$ & $W_{\Pi_n}$ & $A_{\Pi_n}$ & $C_{\Pi_n}$\\
\hline
5 & --- & $\infty$ & --- & 1/4 & 5.78302 & 1.44575\\
7 & 1/3 & 8.31171 & 2.77057 & 1/4 & 5.38644 & 1.34661\\
9 & 1/3 & 7.64379 & 2.54793 & 1/4 & 5.63543 & 1.40886\\
10 & --- & $\infty$ & --- & 1/4 & 5.78302 & 1.44575\\
11 & 1 & 6.12984 & 6.12984 & 1/4 & 5.27188 & 1.31797\\
13 & 1/2 & 5.8883 & 2.94415 & 1/4 & 5.26356 & 1.31589\\
14 & 1/3 & 8.31171 & 2.77057 & 1/4 & 5.38644 & 1.34661\\
15 & 1/4 & 6.31617 & 1.57904 & 1/4 & 5.84408 & 1.46102\\
16 & 1/4 & 6.08123 & 1.52031 & 1/4 & 6.08123 & 1.52031\\
17 & 1/2 & 5.66529 & 2.83265 & 1/4 & 5.26355 & 1.31589
\end{tabular}
\caption{Invariants associated with $\Psi_n$ and $\Pi_n$ for $n \in \{5, 7, 9, 10, 11, \ldots, 17\}$.}
\label{tab:table2}
\end{table}

In this section we estimate the quantity $C_F = W_FA_F$ for binary forms $\Psi_n$, $\Pi_n$, $T_n$ and $U_n$. For small values of $n$ the constants $C_{\Psi_n}$, $C_{\Pi_n}$, $C_{T_n}$ and $C_{U_n}$, along with other invariants, can be found in Tables \ref{tab:table2} and \ref{tab:table3}. The quantities $A_{\Psi_n}$, $A_{\Pi_n}$, $A_{T_n}$ and $A_{U_n}$ were estimated (but not computed) by the author in \cite{mosunov}. In particular, using lower- and upper-bounds on $A_{\Psi_n}$, $A_{\Pi_n}$, $A_{T_n}$ and $A_{U_n}$ established in \cite{mosunov}, one can prove that
\begin{equation} \label{eq:limit-psi}
\lim\limits_{n \rightarrow \infty} A_{\Psi_n} = \lim\limits_{n \rightarrow \infty} A_{\Pi_n} = \frac{16}{3}
\end{equation}
and
\begin{equation} \label{eq:limit-chebyshev}
\lim\limits_{n \rightarrow \infty} A_{T_n} = \lim\limits_{n \rightarrow \infty} A_{U_n} = \frac{8}{3}.
\end{equation}

It remains to compute the rational numbers $W_{\Psi_n}$, $W_{\Pi_n}$, $W_{T_n}$ and $W_{U_n}$. To do so, we use the formula provided in \cite[Theorem 1.2]{stewart-xiao}. Let $\Lambda$ be the sublattice of $\mathbb Z^2$ consisting of $(u, v)$ in $\mathbb Z^2$ for which $A\left(\begin{smallmatrix}u\\v\end{smallmatrix}\right)$ is in $\mathbb Z^2$ for all $A$ in $\Aut F$. Put $m = d(\Lambda)$, where $d(\Lambda)$ is the determinant of $\Lambda$. By \mbox{\cite[Theorem 1.2]{stewart-xiao}}, the value of $W_F$ depends on $m$ when $\Aut F$ is isomorphic to $\bf C_3$, $\bf C_4$, $\bf C_6$, $\bf D_1$ or $\bf D_2$. When $\Aut F$ is isomorphic to $\bf D_3$, $\bf D_4$ or $\bf D_6$, in addition to being dependent on $m$, the value $W_F$ depends on the quantities $m_i = d(\Lambda_i)$. The lattices $\Lambda_i$ are generated in a similar fashion as $\Lambda$ by certain \emph{subgroups} $G_i$ of $\Aut F$, whose order is \mbox{either $2$ or $3$}. When $\Aut F$ is isomorphic to $\bf C_1$ or $\bf C_2$, the value $W_F$ is equal to $1$ or $1/2$, respectively. In the special case when $\operatorname{Aut} F$ is a subgroup of $\GL_2(\mathbb Z)$, we have $m = 1$ and $m_i = 1$, and so the formula for $W_F$ becomes especially simple:
\begin{equation} \label{eq:WF-simple-formula}
W_F = \frac{1}{|\operatorname{Aut} F|}.
\end{equation}
Notice that all automorphism groups in Theorem \ref{thm:cos}, Corollary \ref{cor:sin} and Theorem \ref{thm:chebyshev} are subgroups of $\operatorname{GL}_2(\mathbb Z)$, so the above formula applies. For this reason we omit the calculations and directly state our results. Notice that in the following propositions we do not provide explicit formulas for $C_{\Psi_n}$, $C_{\Pi_n}$, $C_{T_n}$ and $C_{U_n}$, because, as it was mentioned above, the quantities $A_{\Psi_n}$, $A_{\Pi_n}$, $A_{T_n}$ and $A_{U_n}$ were \emph{estimated}, but not \emph{computed}.

\begin{prop} \label{prop:W-psi}
Let $n$ be a positive integer such that $n \notin \{1, 2, 3, 4, 5, 6, 8, 10, 12\}$ and let $d = \varphi(n)/2$, so that $\deg \Psi_n = d$ and $d \geq 3$. Then
$$
W_{\Psi_n} = \begin{cases}
1 & \text{if $d \geq 5$ is odd,}\\
1/2 & \text{if $d \geq 6$ is even and $n \not \equiv 0 \!\!\! \pmod 4$,}\\
1/3 & \text{if $n \in \{7, 9, 14, 18\}$,}\\
1/4 & \text{if $n \in \{15, 30\}$,}\\
1/4 & \text{if $n \equiv 0 \!\!\! \pmod{4}$ and $n \neq 24$,}\\
1/8 & \text{if $n = 24$.}
\end{cases}
$$
Consequently, if we let $R_{\Psi_n}(Z)$ denote the number of integers of absolute value at most $Z$ which are represented by $\Psi_n$, then
$$
R_{\Psi_n}(Z) \sim W_{\Psi_n}A_{\Psi_n}Z^{2/d},
$$
with lower- and upper-bound on $A_{\Psi_n}$ given in \cite[Theorem 1.1]{mosunov}.
\end{prop}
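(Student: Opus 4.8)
The plan is to combine Theorem~\ref{thm:cos} with the simplified weight formula \eqref{eq:WF-simple-formula}. The first step is to observe that in \emph{every} case of Theorem~\ref{thm:cos} the group $\Aut\Psi_n$ is generated by matrices with integer entries: in parts~1--3 the generators are $I$, $-I$ and the diagonal reflections $\left(\begin{smallmatrix}-1 & 0\\0 & 1\end{smallmatrix}\right)$, $\left(\begin{smallmatrix}1 & 0\\0 & -1\end{smallmatrix}\right)$, while for the seven exceptional values of $n$ the generators listed in Table~\ref{tab:automorphisms_cos} are visibly in $\GL_2(\Z)$ (for instance the order-$4$ generators $\left(\begin{smallmatrix}-1 & 2\\-1 & 1\end{smallmatrix}\right)$ for $n=15$ and $\left(\begin{smallmatrix}1 & 2\\-1 & -1\end{smallmatrix}\right)$ for $n=30$ square to $-I$). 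Hence $\Aut\Psi_n$ is a subgroup of $\GL_2(\Z)$, so the lattice $\Lambda$ and every auxiliary lattice $\Lambda_i$ attached to the subgroups $G_i$ of $\Aut\Psi_n$ equal $\Z^2$; thus $m=m_i=1$ and the general formula of \cite[Theorem~1.2]{stewart-xiao} collapses to \eqref{eq:WF-simple-formula}, giving $W_{\Psi_n}=1/|\Aut\Psi_n|$.

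The second step is to read off $|\Aut\Psi_n|$ from the isomorphism type furnished by Theorem~\ref{thm:cos}, case by case. If $d\geq 5$ is odd, $\Aut\Psi_n\cong\bm C_1$, so $W_{\Psi_n}=1$. If $d\geq 6$ is even and $n\not\equiv 0\pmod 4$, $\Aut\Psi_n\cong\bm C_2$, so $W_{\Psi_n}=1/2$. If $n\equiv 0\pmod 4$ and $n\neq 24$ (this includes $n=16$ and $n=20$), $\Aut\Psi_n\cong\bm D_2$, so $W_{\Psi_n}=1/4$. For the exceptional values, Table~\ref{tab:automorphisms_cos} gives $\Aut\Psi_n\cong\bm C_3$ when $n\in\{7,9,14,18\}$, hence $W_{\Psi_n}=1/3$; $\Aut\Psi_n\cong\bm C_4$ when $n\in\{15,30\}$, hence $W_{\Psi_n}=1/4$; and $\Aut\Psi_{24}\cong\bm D_4$, hence $W_{\Psi_{24}}=1/8$. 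Since $\deg\Psi_n\in\{3,4\}$ exactly for $n\in\{7,9,14,15,16,18,20,24,30\}$, and $16,20\equiv 0\pmod 4$, these cases together with parts~1--3 of Theorem~\ref{thm:cos} exhaust all admissible $n$, which matches the six-way split in the statement.

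The third step is the asymptotic formula, which is merely the specialization to $F=\Psi_n$ of the Stewart--Xiao result $R_F(Z)\sim C_FZ^{2/d}$ with $C_F=W_FA_F$ \cite{stewart-xiao}, combined with the value of $W_{\Psi_n}$ just computed and the definition of $A_{\Psi_n}$ as the area of the fundamental region $\{(x,y)\in\R^2 \colon |\Psi_n(x,y)|\leq 1\}$, whose explicit bounds are quoted from \cite[Theorem~1.1]{mosunov}.

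I expect no genuine difficulty beyond careful bookkeeping. The one point that deserves attention is the verification that $\Aut\Psi_n\leq\GL_2(\Z)$ in the exceptional cases --- equivalently, that the exhibited order-$3$ and order-$4$ generators really do have integer entries and the claimed orders --- because this is precisely what licenses replacing the general formula of \cite[Theorem~1.2]{stewart-xiao}, which depends on $m$ and the $m_i$, by the simple identity \eqref{eq:WF-simple-formula}.
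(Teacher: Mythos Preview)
Your proposal is correct and follows essentially the same approach as the paper: the paper observes (just before stating the proposition) that all the automorphism groups in Theorem~\ref{thm:cos} lie in $\GL_2(\Z)$, so \eqref{eq:WF-simple-formula} applies, and then its proof consists of the single sentence that $W_{\Psi_n}$ is a direct consequence of Theorem~\ref{thm:cos} and \eqref{eq:WF-simple-formula}, with the asymptotic coming from \cite[Theorem~1.1]{stewart-xiao}. Your write-up simply unpacks this in more detail, including the case-by-case reading of $|\Aut\Psi_n|$ that the paper explicitly omits.
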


\begin{proof}
The formula for $W_{\Psi_n}$ is a direct consequence of Theorem \ref{thm:cos} and (\ref{eq:WF-simple-formula}). The asymptotic formula for $R_{\Psi_n}(Z)$ follows from \cite[Theorem 1.1]{stewart-xiao}.
\end{proof}

Combining Proposition \ref{prop:W-psi} with $C_F = W_FA_F$ and (\ref{eq:limit-psi}), we find that
$$
\lim\limits_{k \rightarrow \infty}C_{\Psi_{4k}} = \frac{4}{3}.
$$
Further, note that $\deg \Psi_n \geq 3$ is odd if and only if $n = p^j$ or $n = 2p^j$, where \mbox{$p \equiv 3$} \mbox{(mod $4$)} is prime and $j$ is a positive integer. Let $\mathcal S = \{3, 6, 7, 9, 11, 14, 18, \ldots\}$ denote the set of all such integers. Then
$$
\lim\limits_{\substack{n \rightarrow \infty\\n \in \mathcal S}} C_{\Psi_n} = \frac{16}{3} \quad \text{and} \quad \lim\limits_{\substack{n \rightarrow \infty\\ n \notin \mathcal S,\ 4 \nmid n}} C_{\Psi_n} = \frac{8}{3}.
$$

\begin{table}[t]
\centering
\begin{tabular}{c | c | c | c | c | c | c}
$n$ & $W_{T_n}$ & $A_{T_n}$ & $C_{T_n}$ & $W_{U_n}$ & $A_{U_n}$ & $C_{U_n}$\\
\hline
$3$ & 1/2 & 5.78286 & 2.89143 & 1/2 & 4.46217 & 2.23086\\
$4$ & 1/4 & 4.30008 & 1.07502 & 1/4 & 3.50332 & 0.87583\\
$5$ & 1/2 & 3.78568 & 1.89284 & 1/2 & 3.19719 & 1.59859\\
$6$ & 1/4 & 3.52082 & 0.880205 & 1/4 & 3.04985 & 0.762463\\
$7$ & 1/2 & 3.35841 & 1.6792 & 1/2 & 2.96434 & 1.48217\\
$8$ & 1/4 & 3.24832 & 0.812081 & 1/4 & 2.90894 & 0.727235\\
$9$ & 1/2 & 3.16867 & 1.58434 & 1/2 & 2.87035 & 1.43517\\
$10$ & 1/4 & 3.10831 & 0.777077 & 1/4 & 2.84203 & 0.710508\\
$11$ & 1/2 & 3.06096 & 1.53048 & 1/2 & 2.82042 & 1.41021\\
$12$ & 1/4 & 3.02282 & 0.755705 & 1/4 & 2.80343 & 0.700857
\end{tabular}
\caption{Invariants associated with $T_n$ and $U_n$ for $n \in \{3, 4, \ldots, 12\}$.}
\label{tab:table3}
\end{table}

\begin{prop} \label{prop:W-pi}
Let $n$ be a positive integer such that $n \notin \{1, 2, 3, 4, 6, 8, 12, 20\}$. Let $d$ be as in (\ref{eq:d-pi}), so that $\deg \Pi_n = d$. Then
$$
W_{\Pi_n} = \begin{cases}
1 & \text{if $n \equiv 4 \!\!\!\pmod 8$ and $d \geq 5$ is odd,}\\
1/2 & \text{if $n \equiv 4 \!\!\!\pmod 8$ and $d \geq 6$ is even,}\\
1/3 & \text{if $n \in \{28, 36\}$,}\\
1/4 & \text{if $n = 60$,}\\
1/4 & \text{if $n \not \equiv 4 \!\!\!\pmod 8$ and $n \neq 24$,}\\
1/8 & \text{if $n = 24$.}
\end{cases}
$$
Consequently, if we let $R_{\Pi_n}(Z)$ denote the number of integers of absolute value at most $Z$ which are represented by $\Pi_n$, then
$$
R_{\Pi_n}(Z) \sim W_{\Psi_n}A_{\Psi_n}Z^{2/d},
$$
with lower- and upper-bound on $A_{\Pi_n} = A_{\Psi_{c(n)}}$ given in \cite[Theorem 1.1]{mosunov}.
\end{prop}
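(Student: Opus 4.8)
The plan is to mimic the structure of the proof of Proposition \ref{prop:W-psi} exactly, since Proposition \ref{prop:W-pi} is its direct analogue for $\Pi_n$. The two ingredients are: (i) the classification of $\Aut \Pi_n$ given in Corollary \ref{cor:sin}, and (ii) the simplified formula $W_F = 1/|\Aut F|$ from \eqref{eq:WF-simple-formula}, which applies precisely because every automorphism group appearing in Corollary \ref{cor:sin} is a subgroup of $\GL_2(\mathbb Z)$ (one checks this directly from the generators listed in the statement and in Table \ref{tab:automorphisms_sin}: all entries are integers).

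First I would go case by case through Corollary \ref{cor:sin}. In case 1 ($n \equiv 4 \pmod 8$, $d \geq 5$ odd) we have $\Aut \Pi_n \cong \bm C_1$, so $|\Aut \Pi_n| = 1$ and $W_{\Pi_n} = 1$. In case 2 ($n \equiv 4 \pmod 8$, $d \geq 6$ even) we have $\Aut \Pi_n \cong \bm C_2$, giving $W_{\Pi_n} = 1/2$. In case 3 ($n \not\equiv 4 \pmod 8$, $n \neq 24$) we have $\Aut \Pi_n \cong \bm D_2$, which has order $4$, so $W_{\Pi_n} = 1/4$; this covers the last-but-one line of the asserted formula. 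In case 4 I would read off Table \ref{tab:automorphisms_sin}: for $n \in \{28, 36\}$, $\Aut \Pi_n \cong \bm C_3$ has order $3$, so $W_{\Pi_n} = 1/3$; for $n = 60$, $\Aut \Pi_n \cong \bm C_4$ has order $4$, so $W_{\Pi_n} = 1/4$; and for $n = 24$, $\Aut \Pi_n \cong \bm D_4$ has order $8$, so $W_{\Pi_n} = 1/8$. Assembling these gives exactly the six-line formula in the statement. For the asymptotic formula, I would invoke \cite[Theorem 1.1]{stewart-xiao}, which gives $R_{\Pi_n}(Z) \sim W_{\Pi_n} A_{\Pi_n} Z^{2/d}$, and then use \eqref{eq:Pi}, i.e.\ $\Pi_n = \Psi_{c(n)}$, so that $A_{\Pi_n} = A_{\Psi_{c(n)}}$ and the bounds from \cite[Theorem 1.1]{mosunov} apply to it.

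There is no real obstacle here: the only thing requiring care is confirming that \eqref{eq:WF-simple-formula} is genuinely applicable, i.e.\ that $\Aut \Pi_n \subseteq \GL_2(\mathbb Z)$ in every case so that $m = 1$ and all $m_i = 1$. This is immediate from inspection of the generating matrices, all of which have integer entries and determinant $\pm 1$. One should also note, as in Proposition \ref{prop:W-psi}, that no closed-form value of $C_{\Pi_n}$ is claimed, since $A_{\Pi_n}$ was only estimated in \cite{mosunov}. A short proof along these lines reads as follows.

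\begin{proof}
Every automorphism group listed in Corollary \ref{cor:sin} and in Table \ref{tab:automorphisms_sin} is generated by matrices with integer entries, hence is a subgroup of $\GL_2(\mathbb Z)$. Therefore, in the notation of \cite[Theorem 1.2]{stewart-xiao}, we have $m = 1$ and $m_i = 1$, so \eqref{eq:WF-simple-formula} applies and $W_{\Pi_n} = 1/|\Aut \Pi_n|$. By Corollary \ref{cor:sin}, $|\Aut \Pi_n| = 1$ when $n \equiv 4 \pmod 8$ and $d$ is odd; $|\Aut \Pi_n| = 2$ when $n \equiv 4 \pmod 8$ and $d$ is even; $|\Aut \Pi_n| = 4$ when $n \not\equiv 4 \pmod 8$ and $n \neq 24$; and, from Table \ref{tab:automorphisms_sin}, $|\Aut \Pi_n| = 3$ for $n \in \{28, 36\}$, $|\Aut \Pi_n| = 4$ for $n = 60$, and $|\Aut \Pi_n| = 8$ for $n = 24$. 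This yields the stated formula for $W_{\Pi_n}$. The asymptotic formula for $R_{\Pi_n}(Z)$ now follows from \cite[Theorem 1.1]{stewart-xiao}, and the bounds on $A_{\Pi_n} = A_{\Psi_{c(n)}}$ are those of \cite[Theorem 1.1]{mosunov} in view of \eqref{eq:Pi}.
\end{proof}
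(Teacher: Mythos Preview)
Your proposal is correct and follows exactly the same approach as the paper: the paper's proof simply states that the formula for $W_{\Pi_n}$ is a direct consequence of Corollary~\ref{cor:sin} and \eqref{eq:WF-simple-formula}, and that the asymptotic follows from \cite[Theorem 1.1]{stewart-xiao}. You have merely spelled out the case-by-case verification that the paper leaves implicit.
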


\begin{proof}
The formula for $W_{\Pi_n}$ is a direct consequence of Corollary \ref{cor:sin} and (\ref{eq:WF-simple-formula}). The asymptotic formula for $R_{\Pi_n}(Z)$ follows from \cite[Theorem 1.1]{stewart-xiao}.
\end{proof}

Combining Proposition \ref{prop:W-pi} with $C_F = W_FA_F$ and (\ref{eq:limit-psi}), we find that
$$
\lim\limits_{\substack{n \rightarrow \infty\\ n \not \equiv 4 \!\!\!\!\!\pmod 8}}C_{\Pi_n} = \frac{4}{3}.
$$
Further, note that $\deg \Pi_n \geq 3$ is odd if and only if $n = 4p^j$, where \mbox{$p \equiv 3$} \mbox{(mod $4$)} is prime and $j$ is a positive integer. Let $\mathcal T = \{12, 28, 36, 44, 76, 92, 108, \ldots\}$ denote the set of all such integers. Then
$$
\lim\limits_{\substack{n \rightarrow \infty\\n \in \mathcal T}} C_{\Pi_n} = \frac{16}{3} \quad \text{and} \quad \lim\limits_{\substack{n \rightarrow \infty\\\substack{n \notin \mathcal T,\ 8 \mid (n - 4)}}} C_{\Pi_{n}} = \frac{8}{3}.
$$

\begin{prop} \label{prop:W-chebyshev}
Let $n$ be an integer such that $n \geq 3$. Then
$$
W_{T_n} = W_{U_n} =
\begin{cases}
1/2 & \text{if $n$ is odd,}\\
1/4 & \text{if $n$ is even.}
\end{cases}
$$
Consequently, if we let $R_{T_n}(Z)$ and $R_{U_n}(Z)$ denote the number of integers of absolute value at most $Z$ which are represented by $T_n$ and $U_n$, respectively, then
$$
R_{T_n}(Z) \sim W_{T_n}A_{T_n}Z^{2/n},
$$
$$
R_{U_n}(Z) \sim W_{U_n}A_{U_n}Z^{2/n},
$$
with lower- and upper-bounds on $A_{T_n}$ and $A_{U_n}$ given in \cite[Theorem 1.3]{mosunov} and \cite[Theorem 1.4]{mosunov}, respectively.
\end{prop}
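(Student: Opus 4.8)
The plan is to derive the formula for $W_{T_n}$ and $W_{U_n}$ as an immediate corollary of Theorem \ref{thm:chebyshev} together with equation (\ref{eq:WF-simple-formula}), exactly mirroring the proofs of Propositions \ref{prop:W-psi} and \ref{prop:W-pi}. The key observation, already flagged in the surrounding text, is that every automorphism group appearing in Theorem \ref{thm:chebyshev} is a subgroup of $\GL_2(\Z)$: for $n$ odd, $\Aut T_n = \Aut U_n \cong \bm C_2$ is generated by $\left(\begin{smallmatrix}-1 & 0\\0 & 1\end{smallmatrix}\right)$, and for $n$ even, $\Aut T_n = \Aut U_n \cong \bm D_2$ is generated by $\left(\begin{smallmatrix}-1 & 0\\0 & 1\end{smallmatrix}\right)$ and $\left(\begin{smallmatrix}1 & 0\\0 & -1\end{smallmatrix}\right)$. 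In all these cases the lattice $\Lambda$ of (\ref{eq:WF-simple-formula})'s setup is all of $\Z^2$, so $m = 1$ (and $m_i = 1$ for the relevant order-$2$ subgroups $G_i$ when the group is $\bm D_2$), and hence the simplified formula $W_F = 1/|\Aut F|$ applies directly.

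Thus the first step is to record $|\Aut T_n| = |\Aut U_n| = 2$ when $n$ is odd and $= 4$ when $n$ is even, reading these orders off Theorem \ref{thm:chebyshev}. Plugging into (\ref{eq:WF-simple-formula}) gives $W_{T_n} = W_{U_n} = 1/2$ for odd $n$ and $1/4$ for even $n$, which is the stated piecewise formula. The second step is to invoke \cite[Theorem 1.1]{stewart-xiao}, which gives the asymptotic $R_F(Z) \sim C_F Z^{2/d} = W_F A_F Z^{2/d}$ for any binary form $F$ of degree $d \geq 3$ with nonzero discriminant; since $\deg T_n = \deg U_n = n \geq 3$ and both forms have nonzero discriminant (their roots $\cos\left(\frac{(2k+1)\pi}{2n}\right)$, resp.\ $\cos\left(\frac{k\pi}{n+1}\right)$, are distinct), this yields $R_{T_n}(Z) \sim W_{T_n} A_{T_n} Z^{2/n}$ and $R_{U_n}(Z) \sim W_{U_n} A_{U_n} Z^{2/n}$. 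Finally, the bounds on $A_{T_n}$ and $A_{U_n}$ are cited verbatim from \cite[Theorems 1.3 and 1.4]{mosunov}.

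There is essentially no obstacle here: the proposition is a bookkeeping consequence of the already-proved Theorem \ref{thm:chebyshev} and standard inputs. The only point requiring a moment's care — and it is the ``main obstacle'' only in the weakest sense — is confirming that the hypotheses of the simple formula (\ref{eq:WF-simple-formula}) are met, i.e.\ that $\Aut T_n$ and $\Aut U_n$ genuinely sit inside $\GL_2(\Z)$ and not merely $\GL_2(\Q)$; but this is transparent from the explicit integer generators listed in Theorem \ref{thm:chebyshev}, so no further argument is needed.
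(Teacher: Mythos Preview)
Your proposal is correct and matches the paper's own proof essentially verbatim: the paper likewise deduces $W_{T_n}$ and $W_{U_n}$ directly from Theorem~\ref{thm:chebyshev} together with the simplified formula~(\ref{eq:WF-simple-formula}), and then quotes \cite[Theorem 1.1]{stewart-xiao} for the asymptotics. No additional ideas are needed.
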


\begin{proof}
The formulas for $W_{T_n}$ and $W_{U_n}$ can be established with Theorem \ref{thm:chebyshev} and (\ref{eq:WF-simple-formula}). The asymptotic formulas for $R_{T_n}(Z)$ and $R_{U_n}(Z)$ follow from \cite[Theorem 1.1]{stewart-xiao}.
\end{proof}

Combining Proposition \ref{prop:W-chebyshev} with $C_F = W_FA_F$ and (\ref{eq:limit-chebyshev}), we find that
$$
\lim\limits_{k \rightarrow \infty}C_{T_{2k + 1}} = \lim\limits_{k \rightarrow \infty}C_{U_{2k + 1}} = \frac{4}{3}
$$
and
$$
\lim\limits_{k \rightarrow \infty}C_{T_{2k}} = \lim\limits_{k \rightarrow \infty}C_{U_{2k}} = \frac{2}{3}.
$$

\section{Preliminary Lemmas} \label{sec:theory}

In this section we summarize some facts that will become useful to us when proving Theorems \ref{thm:cos} and \ref{thm:chebyshev}.

\begin{lem} \label{lem:galois}
Let $n$ be a positive integer. The Galois group of the field $\mathbb Q\left(2\cos\left(\frac{2\pi}{n}\right)\right)$ is Abelian and it consists of field automorphisms $\sigma_k$ defined by $\sigma_k\left(2\cos\left(\frac{2\pi}{n}\right)\right) = 2\cos\left(\frac{2\pi k}{n}\right)$, where $k$ is an integer coprime to $n$.
\end{lem}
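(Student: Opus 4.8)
The plan is to realize $\mathbb{Q}\!\left(2\cos\frac{2\pi}{n}\right)$ as the maximal real subfield of the cyclotomic field $\mathbb{Q}(\zeta_n)$, where $\zeta_n = e^{2\pi i/n}$, and then transport the known description of $\Gal(\mathbb{Q}(\zeta_n)/\mathbb{Q})$ down to this subfield. First I would recall that $\mathbb{Q}(\zeta_n)/\mathbb{Q}$ is Galois with abelian Galois group isomorphic to $(\mathbb{Z}/n\mathbb{Z})^\times$, via the map sending the residue class of $k$ (with $\gcd(k,n)=1$) to the automorphism $\tau_k$ determined by $\tau_k(\zeta_n) = \zeta_n^k$. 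Since $2\cos\frac{2\pi}{n} = \zeta_n + \zeta_n^{-1} \in \mathbb{Q}(\zeta_n)$, the field $L := \mathbb{Q}\!\left(2\cos\frac{2\pi}{n}\right)$ is a subfield of $\mathbb{Q}(\zeta_n)$; in fact it is the fixed field of the order-two subgroup $\{\tau_1, \tau_{-1}\}$ generated by complex conjugation $\tau_{-1}$, so $[L:\mathbb{Q}] = \varphi(n)/2$ for $n \geq 3$.

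Next I would invoke the fundamental theorem of Galois theory: because $\{\tau_1,\tau_{-1}\}$ is normal in the abelian group $\Gal(\mathbb{Q}(\zeta_n)/\mathbb{Q})$, the extension $L/\mathbb{Q}$ is itself Galois, and restriction gives a surjective homomorphism $\Gal(\mathbb{Q}(\zeta_n)/\mathbb{Q}) \twoheadrightarrow \Gal(L/\mathbb{Q})$ with kernel exactly $\{\tau_1,\tau_{-1}\}$. Hence $\Gal(L/\mathbb{Q}) \cong (\mathbb{Z}/n\mathbb{Z})^\times / \{\pm 1\}$, which is abelian, and each coset $\{k, -k\}$ acts by $\tau_k|_L$. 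Writing $\sigma_k := \tau_k|_L$, we compute directly that $\sigma_k\!\left(2\cos\frac{2\pi}{n}\right) = \tau_k(\zeta_n + \zeta_n^{-1}) = \zeta_n^k + \zeta_n^{-k} = 2\cos\frac{2\pi k}{n}$, which is the claimed formula; note $\sigma_k = \sigma_{-k}$, consistent with the quotient by $\{\pm 1\}$, and that $\sigma_k$ ranges over all $\varphi(n)/2$ automorphisms as $k$ ranges over representatives of $(\mathbb{Z}/n\mathbb{Z})^\times$ up to sign.

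The one genuinely delicate point — the only real obstacle — is verifying that $[L:\mathbb{Q}] = \varphi(n)/2$, equivalently that the fixed field of $\{\tau_1,\tau_{-1}\}$ is exactly $L$ rather than something larger, i.e.\ that complex conjugation acts nontrivially on $\mathbb{Q}(\zeta_n)$ (true precisely when $n \geq 3$, since $\zeta_n$ is then non-real) and that it fixes $2\cos\frac{2\pi}{n}$. Both are immediate: $\tau_{-1}(\zeta_n + \zeta_n^{-1}) = \zeta_n^{-1} + \zeta_n = 2\cos\frac{2\pi}{n}$, so $L$ lies in the fixed field of the order-two subgroup $\langle \tau_{-1}\rangle$, which has index $\varphi(n)/2$; and $L \not\subseteq \mathbb{Q}(\zeta_n)^{\langle\tau_{-1}\rangle}$ could only fail if the fixed field were strictly larger, but the fixed field of $\langle\tau_{-1}\rangle$ has degree exactly $\varphi(n)/2$ over $\mathbb{Q}$ while a standard argument (e.g.\ that $\zeta_n$ satisfies $\zeta_n^2 - (2\cos\frac{2\pi}{n})\zeta_n + 1 = 0$, so $[\mathbb{Q}(\zeta_n):L] \leq 2$) forces $[L:\mathbb{Q}] \geq \varphi(n)/2$, giving equality. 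For $n \in \{1,2\}$ one has $2\cos\frac{2\pi}{n} \in \mathbb{Q}$ and the statement is trivially true with the Galois group being trivial, so the claim holds for all positive $n$.
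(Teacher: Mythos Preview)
Your proof is correct and follows essentially the same approach as the paper's: embed $\mathbb{Q}\!\left(2\cos\frac{2\pi}{n}\right)$ into the cyclotomic field $\mathbb{Q}(\zeta_n)$, invoke the standard isomorphism $\Gal(\mathbb{Q}(\zeta_n)/\mathbb{Q}) \cong (\mathbb{Z}/n\mathbb{Z})^\times$, and obtain the $\sigma_k$ by restricting the cyclotomic automorphisms $\tau_k$. Your write-up is in fact more detailed than the paper's, which simply observes that subfields of abelian extensions are abelian and that restriction sends $\tau_k$ to $\sigma_k$; you additionally verify $[L:\mathbb{Q}]=\varphi(n)/2$ via the quadratic relation $\zeta_n^2-(2\cos\frac{2\pi}{n})\zeta_n+1=0$ and treat the degenerate cases $n\in\{1,2\}$ explicitly.
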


\begin{proof}
Let $\zeta_n = e^{\frac{2\pi i}{n}}$. By \cite[Theorem 14.5.26]{dummit-foote}, the Galois group of the cyclotomic field $\mathbb Q\left(\zeta_n\right)$ is Abelian, because it is isomorphic to $(\mathbb Z/n\mathbb Z)^\times$, the multiplicative group of invertible elements in $\mathbb Z/n\mathbb Z$. Since $2\cos\left(\frac{2\pi}{n}\right) = \zeta_n + \zeta_n^{-1}$, we see that $\mathbb Q(\zeta_n + \zeta_n^{-1})$ is a subfield of $\mathbb Q(\zeta_n)$, so the Galois group of $\mathbb Q(\zeta_n + \zeta_n^{-1})$ is also Abelian. Furthermore, the Galois group of $\mathbb Q(\zeta_n)$ consists of field automorphisms $\tau_k$ defined by $\tau_k(\zeta_n) = \zeta_n^k$, where $k$ is an integer coprime to $n$. Restricting the field automorphism $\tau_k$ to the field $\mathbb Q(\zeta_n + \zeta_n^{-1})$ gives us the field automorphism $\sigma_k$ defined by $\sigma_k(\zeta_n + \zeta_n^{-1}) = \zeta_n^k + \zeta_n^{-k}$.
\end{proof}

\begin{lem} \label{lem:odd_and_even}
Let $n \geq 3$ be an integer and let $d = \varphi(n)/2$, so that $\deg \Psi_n = d$.

\begin{enumerate}[1.]
\item If $n \equiv 0 \pmod 4$, then $\Psi_n(x) = g(x^2)$, where $g(x)$ is the minimal polynomial of $2 + 2\cos\left(\frac{4\pi}{n}\right)$.

\item If $n$ is odd, then $-2\cos\left(\frac{2\pi}{n}\right)$ is a conjugate of $2\cos\left(\frac{\pi}{n}\right)$. Consequently,
$$
\Psi_n(x) = (-1)^d\Psi_{2n}(-x).
$$
\end{enumerate}
\end{lem}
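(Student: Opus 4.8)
The plan is to treat the two parts separately, both times exploiting the explicit description of the roots of $\Psi_n$ from (\ref{eq:psin}).

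For part 1, suppose $n \equiv 0 \pmod 4$. The key observation is that the set of roots $\{2\cos(2\pi k/n) : 1 \leq k < n/2,\ \gcd(k,n)=1\}$ is stable under negation. Indeed, if $\gcd(k,n) = 1$ then $k$ is odd, and since $4 \mid n$ we have $n/2$ even, so $k' = n/2 - k$ satisfies $1 \leq k' < n/2$ and $\gcd(k',n) = 1$ (as $k'$ is again odd and coprime to the odd part of $n$); moreover $2\cos(2\pi k'/n) = 2\cos(\pi - 2\pi k/n) = -2\cos(2\pi k/n)$. Hence the roots come in pairs $\pm\alpha$, so $\Psi_n(x)$ is an even polynomial and can be written as $g(x^2)$ for some monic $g \in \Z[x]$ of degree $d/2$. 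The roots of $g$ are the numbers $\alpha^2 = 4\cos^2(2\pi k/n) = 2 + 2\cos(4\pi k/n)$; taking $k = 1$ shows $2 + 2\cos(4\pi/n)$ is a root of $g$. To conclude $g$ is the \emph{minimal} polynomial of this number, I would argue that $[\Q(2\cos(2\pi/n)) : \Q(2 + 2\cos(4\pi/n))] = 2$, so the minimal polynomial of $2+2\cos(4\pi/n)$ has degree exactly $d/2 = \deg g$; alternatively, one shows $g$ is irreducible by noting that $2\cos(2\pi/n) \notin \Q(\cos(4\pi/n))$ (since $\cos(2\pi/n)$ has degree $2$ over this field, as $\cos(2\pi/n)$ is a root of $X^2 = (2+2\cos(4\pi/n))/4$ which is not a square in the real subfield) together with the fact that the conjugates of $2+2\cos(4\pi/n)$ are exactly the $2+2\cos(4\pi k/n)$ via the Galois action of Lemma \ref{lem:galois}.

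For part 2, suppose $n$ is odd. Then $-2\cos(2\pi/n) = 2\cos(\pi - 2\pi/n) = 2\cos\bigl(\pi(n-2)/n\bigr) = 2\cos\bigl(2\pi(n-2)/(2n)\bigr)$. Since $n$ is odd, $\gcd(n-2, 2n)$: the number $n-2$ is odd, and $\gcd(n-2, n) = \gcd(-2, n) = 1$ as $n$ is odd, so $\gcd(n-2,2n) = 1$. Thus $2\pi(n-2)/(2n)$ is of the form $2\pi m / (2n)$ with $\gcd(m, 2n) = 1$, which means $-2\cos(2\pi/n) = 2\cos(2\pi m/(2n))$ is a conjugate of $2\cos(2\pi/(2n))$ by Lemma \ref{lem:galois} (its Galois orbit is the full set of primitive values). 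In particular $\Q(2\cos(2\pi/n)) = \Q(2\cos(2\pi/(2n)))$ and $-2\cos(2\pi/n)$ is a root of $\Psi_{2n}(x)$, hence $-x$ maps roots of $\Psi_n$ into roots of $\Psi_{2n}$; since $\deg\Psi_n = \deg\Psi_{2n} = d$ (because $\varphi(2n) = \varphi(n)$ for odd $n$) and both are monic, comparing leading coefficients gives $\Psi_n(x) = (-1)^d \Psi_{2n}(-x)$.

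The main obstacle I anticipate is the irreducibility/degree claim in part 1 — verifying that $g$ really is the minimal polynomial rather than a proper power or a product. The clean way around it is to use Lemma \ref{lem:galois}: the field $\Q(2\cos(2\pi/n))$ has degree $d$, it contains $2+2\cos(4\pi/n)$, and the Galois group acts on $2+2\cos(4\pi/n)$ through $\sigma_k \mapsto 2+2\cos(4\pi k/n)$, with kernel exactly $\{\pm 1\} \subseteq (\Z/n\Z)^\times$ (using $4 \mid n$ so that $\sigma_k$ fixes $\cos(4\pi/n)$ iff $k \equiv \pm 1 \pmod{n/2}$, which for $k$ coprime to $n$ with $4\mid n$ forces $k \equiv \pm 1 \pmod n$). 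Hence the orbit has size $d/2$, so $[\Q(2+2\cos(4\pi/n)):\Q] = d/2 = \deg g$, forcing $g$ to be irreducible. Everything else is a routine coprimality check and a leading-coefficient comparison.
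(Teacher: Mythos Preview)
Your Part 2 is correct and essentially matches the paper's argument (you use $n-2$ where the paper uses $n+2$, but the idea is identical: rewrite $-2\cos(2\pi/n)$ as $2\cos(2\pi m/(2n))$ with $\gcd(m,2n)=1$, then compare degrees and leading coefficients).

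For Part 1 your overall strategy---first show $\Psi_n$ is even, write it as $g(x^2)$, then argue $g$ is irreducible via a Galois orbit count---is sound, but the orbit computation contains an error. You assert that for $k$ coprime to $n$ with $4\mid n$, the condition $k\equiv\pm1\pmod{n/2}$ forces $k\equiv\pm1\pmod n$. This is false: take $k=n/2+1$ (e.g.\ $n=8$, $k=5$). Since $n/2$ is even, $n/2+1$ is odd, and it is $\equiv 1$ modulo every odd prime dividing $n$, hence coprime to $n$; yet $n/2+1\not\equiv\pm1\pmod n$ for $n\geq 8$. The correct stabilizer of $\cos(4\pi/n)$ in $(\Z/n\Z)^\times$ is $\{1,\,n/2-1,\,n/2+1,\,n-1\}$, of order $4$; in the Galois group $(\Z/n\Z)^\times/\{\pm1\}$ this becomes a subgroup of order $2$. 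Your conclusion ``orbit size $d/2$'' is therefore right, but as written it does not follow from ``kernel exactly $\{\pm1\}$'', which would give orbit size $d$.

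The paper avoids this computation entirely by reversing the logic. It \emph{defines} $g$ to be the minimal polynomial of $2+2\cos(4\pi/n)=2+2\cos\!\bigl(2\pi/(n/2)\bigr)$, so that $\deg g=\varphi(n/2)/2$ is immediate (a rational translate of $2\cos(2\pi/(n/2))$ generates the same field). Then $4\mid n$ gives $\varphi(n)=2\varphi(n/2)$, hence $\deg g(x^2)=\varphi(n/2)=\deg\Psi_n$. Since $g(x^2)$ is monic and vanishes at $2\cos(2\pi/n)$, it must equal $\Psi_n$. No separate irreducibility argument or stabilizer computation is needed. Your approach can be repaired by fixing the stabilizer count, but the paper's route is shorter.
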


\begin{proof}
\hskip1pt

\begin{enumerate}[1.]
\item Suppose that $n \equiv 0 \pmod 4$. Recall that $2\cos^2(x) = 1 + \cos(2x)$ for any $x \in \R$. Therefore,
$$
4\cos^2\left(\frac{2\pi}{n}\right) = 2\left(1 + \cos\left(\frac{4\pi}{n}\right)\right) = 2 + 2\cos\left(\frac{2\pi}{n/2}\right).
$$
Let $g(x)$ denote the minimal polynomial of $2 + 2\cos\left(\frac{2\pi}{(n/2)}\right)$. Note that $\deg g(x) = \varphi(n/2)/2$ and
$$
g\left(4\cos^2\left(\frac{2\pi}{n}\right)\right) = 0.
$$
Since for any positive integer $n$ divisible by $4$ it is the case that $\varphi(n)/2 = \varphi(n/2)$, we have
$$
\deg \Psi_n(x) = \frac{\varphi(n)}{2} = 2\cdot \frac{\varphi(n/2)}{2} = 2\deg g(x) = \deg g(x^2).
$$
Since the polynomials $g(x^2)$ and $\Psi_n(x)$ have equal degrees, both vanish at $2\cos\left(\frac{2\pi}{n}\right)$, and the leading coefficient of $g(x^2)$ is positive, we conclude that $\Psi_n(x) = g(x^2)$.

\item Suppose that $n$ is odd. Note that
$$
-2\cos\left(\frac{2\pi}{n}\right) = 2\cos\left(\pi + \frac{2\pi}{n}\right) = 2\cos\left(\frac{2\pi(n + 2)}{2n}\right).
$$
Since $\gcd(2n, n + 2) = 1$, we see that $-2\cos\left(\frac{2\pi}{n}\right)$ is a conjugate of $2\cos\left(\frac{\pi}{n}\right)$. Thus \mbox{$\Psi_{2n}\left(-2\cos\left(\frac{2\pi}{n}\right)\right) = 0$.} But then $2\cos\left(\frac{2\pi}{n}\right)$ is a root of $(-1)^d\Psi_{2n}(-x)$, and since the leading coefficient of this polynomial is positive, it must be equal to the minimal polynomial of $2\cos\left(\frac{2\pi}{n}\right)$.
\end{enumerate}
\end{proof}

\begin{lem} \label{lem:automorphism_of_adjacent_form}
Let $F(x, y) \in \Z[x, y]$ be a binary form. Let $r$ be a non-zero rational number and let $S \in \operatorname{GL}_2(\mathbb Q)$. Then
$$
\Aut rF_S = S^{-1}(\Aut F)S \quad \text{and} \quad \Aut |rF_S| = S^{-1}\left(\Aut |F|\right)S.
$$
\end{lem}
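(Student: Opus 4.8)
The plan is to prove the two identities directly from the definition of $F_M$ and the group structure, treating the second one as essentially a rerun of the first. Throughout, write $G = \Aut F$ and recall that, for $M, N \in \GL_2(\C)$, one has $(F_M)_N = F_{MN}$; this associativity of the action is the only algebraic fact needed, and it follows immediately by substituting the linear forms and composing.

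First I would establish $\Aut rF_S \subseteq S^{-1}(\Aut F)S$. Take $M \in \Aut rF_S$, so $(rF_S)_M = rF_S$. Since $r \neq 0$ and the scalar $r$ pulls out of the substitution (that is, $(rF)_N = r(F_N)$ for any $N$), this says $r(F_S)_M = r(F_S)$, hence $(F_S)_M = F_S$, i.e. $F_{SM} = F_S$. Applying $(-)_{S^{-1}}$ to both sides and using $(F_{SM})_{S^{-1}} = F_{SMS^{-1}}$ and $(F_S)_{S^{-1}} = F_{SS^{-1}} = F$, we get $F_{SMS^{-1}} = F$, so $SMS^{-1} \in \Aut F$, which gives $M \in S^{-1}(\Aut F)S$. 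For the reverse inclusion I would run the same chain of equalities backwards: if $M = S^{-1}AS$ with $A \in \Aut F$, then $F_{SM} = F_{AS} = (F_A)_S = F_S$, whence $(rF_S)_M = r F_{SM} = rF_S$. This proves the first identity. One should also remark that $S^{-1}(\Aut F)S$ is indeed a subgroup of $\GL_2(\Q)$ — it is the image of $\Aut F$ under conjugation by $S \in \GL_2(\Q)$ — so the statement is well-posed.

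For the second identity, the argument is the same with $r$ replaced ad hoc by $\pm 1$. If $M \in \Aut |rF_S|$, then $(rF_S)_M = \pm rF_S$, i.e. $r F_{SM} = \pm r F_S$, so $F_{SM} = \pm F_S$; conjugating by $S^{-1}$ as before yields $F_{SMS^{-1}} = \pm F$, so $SMS^{-1} \in \Aut|F|$. The converse is identical. The only mild subtlety worth a sentence is that the sign is not forced to be $+$ even when it was $+$ for $F$ — but that is exactly why $\Aut|{\cdot}|$ is the right object here, and it causes no trouble because $\pm F_S = \pm(F_S)$ survives the substitution.

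I do not anticipate a real obstacle: the whole proof is the observation that $\GL_2(\C)$ acts on binary forms on the right via $M \mapsto F_M$ (strictly, a right action after the identification $(F_M)_N = F_{MN}$), that scaling by a fixed nonzero rational $r$ commutes with this action, and that stabilizers transform by conjugation under a change of the form by $S$. If anything needs care, it is bookkeeping the direction of the conjugation (whether it is $S^{-1}GS$ or $SGS^{-1}$), which the explicit substitution computation in the first paragraph pins down unambiguously. I would keep the write-up to a few lines, stating $(F_M)_N = F_{MN}$ and $(rF)_M = r(F_M)$ as the two elementary identities and then giving the four-line inclusion argument for each of the two claims.
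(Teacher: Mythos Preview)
Your proof is correct and follows essentially the same approach as the paper: both arguments use the associativity $(F_M)_N = F_{MN}$ together with the observation that scaling by a nonzero rational does not affect the automorphism group, and both reduce the claim to the conjugation computation $F_{SMS^{-1}} = F \Longleftrightarrow (F_S)_M = F_S$. The paper packages this as a single if-and-only-if chain while you write out the two inclusions separately, but the content is identical.
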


\begin{proof}
We see that $F_M = F$ if and only if
$$
F_S = (F_M)_S = F_{MS} = \left((F_S)_{S^{-1}}\right)_{MS} = (F_S)_{S^{-1}MS}.
$$
We conclude that $M \in \Aut F$ if and only if $S^{-1}MS \in \Aut F_S$. This means that $\Aut F_S = S^{-1}\left(\Aut F\right)S$. Since $\Aut rF_S = \Aut F_S$, the result follows. The equality $\Aut |rF_S| = S^{-1}\left(\Aut |F|\right)S$ can be proved analogously.
\end{proof}

\begin{lem} \label{lem:options}
Let
$$
D_2 = 
\left\langle
\begin{pmatrix}-1 & 0\\0 & 1\end{pmatrix},
\begin{pmatrix}1 & 0\\0 & -1\end{pmatrix}
\right\rangle.
$$
Every finite subgroup of $\GL_2(\Q)$ that properly contains $D_2$ is either of the form
$$
\left\langle
\begin{pmatrix}
-1 & 0\\
0 & 1
\end{pmatrix},
\begin{pmatrix}
0 & t\\
-1/t & 0
\end{pmatrix}
\right\rangle
$$
or of the form
$$
\left\langle
\begin{pmatrix}
-1 & 0\\
0 & 1
\end{pmatrix},
\begin{pmatrix}
1/2 & t/2\\
-3/(2t) & 1/2
\end{pmatrix}
\right\rangle
$$
for some non-zero $t \in \Q$.
\end{lem}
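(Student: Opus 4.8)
The plan is to start from the list of finite subgroups of $\GL_2(\Q)$ in Table~\ref{tab:finite_subgroups}, up to conjugation, and work out which conjugates can actually contain the \emph{specific} group $D_2 = \langle \left(\begin{smallmatrix}-1&0\\0&1\end{smallmatrix}\right), \left(\begin{smallmatrix}1&0\\0&-1\end{smallmatrix}\right)\rangle$. Since $D_2$ has order $4$ and is not cyclic, any finite $G$ properly containing it must appear in the $\bm D_n$ column with $|G| > 4$; inspecting the table, the only options are $G \cong \bm D_4$ (order $8$) and $G \cong \bm D_6$ (order $12$). (One should first note $\bm D_3 \cong S_3$ has order $6$, which is not divisible by $4$, so it cannot contain a subgroup of order $4$; thus $\bm D_3$ is excluded on Lagrange grounds.) So the argument splits into these two cases.

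In each case the strategy is: $G$ is generated by $D_2$ together with one extra element $M = \left(\begin{smallmatrix}s&u\\t&v\end{smallmatrix}\right)$, and I impose that conjugation by the two diagonal generators of $D_2$ sends $M$ into $G$. First, conjugating $M$ by $\left(\begin{smallmatrix}-1&0\\0&1\end{smallmatrix}\right)$ gives $\left(\begin{smallmatrix}s&-u\\-t&v\end{smallmatrix}\right)$, and this must again lie in the finite group $G$; combined with the constraint that $M$ has finite order (so its trace and determinant are constrained — $\det M = \pm 1$ and $s+v$ is bounded), one pins down the shape of $M$. The cleanest way is to observe that $D_2$ is a normal subgroup of any $G$ that contains it with index $\leq 3$ (index $2$ for $\bm D_4$, index $3$ for $\bm D_6$), so $G$ acts on $D_2$ by conjugation; analyzing this action forces $M$ to normalize $D_2$, i.e. to permute the three involutions $\left(\begin{smallmatrix}-1&0\\0&1\end{smallmatrix}\right)$, $\left(\begin{smallmatrix}1&0\\0&-1\end{smallmatrix}\right)$, $-I$ of $D_2\setminus\{I\}$. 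Since $-I$ is central in $\GL_2(\Q)$ it is fixed, so $M$ either fixes both diagonal involutions or swaps them.

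Now I split on that dichotomy. If $M$ fixes both, then $M$ commutes with $\left(\begin{smallmatrix}-1&0\\0&1\end{smallmatrix}\right)$, forcing $u = t = 0$, so $M$ is diagonal; then finiteness of $\langle M, -I\rangle \subset \GL_2(\Q)$ forces $M = \pm\left(\begin{smallmatrix}\pm1&0\\0&\pm1\end{smallmatrix}\right) \in D_2$, contradicting that $G$ \emph{properly} contains $D_2$ via this generator. Hence $M$ swaps the two diagonal involutions, i.e. $M\left(\begin{smallmatrix}-1&0\\0&1\end{smallmatrix}\right)M^{-1} = \left(\begin{smallmatrix}1&0\\0&-1\end{smallmatrix}\right)$. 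Writing this out gives $s v + u t = 0$ together with $s = -v$ from matching... more precisely, equating entries forces the diagonal of $M$ to vanish, so $M = \left(\begin{smallmatrix}0&u\\t&0\end{smallmatrix}\right)$ (the off-antidiagonal parts vanish because an anti-diagonal matrix conjugates $\mathrm{diag}(-1,1)$ to $\mathrm{diag}(1,-1)$, and a general $M$ doing the same must differ from an anti-diagonal one by something centralizing $\mathrm{diag}(-1,1)$, i.e. a diagonal matrix — and the product being anti-diagonal forces that diagonal factor to be scalar, which can be absorbed). Then $M$ has order dividing... we compute $M^2 = \mathrm{diag}(ut, ut)= ut\cdot I$, so for $M$ to have finite order we need $ut \in \{1, -1\}$. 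If $ut = -1$ then $M^2 = -I$, $M$ has order $4$, and setting $t = 1/\tau$, $u = -\tau$ we recover $\left(\begin{smallmatrix}0 & t\\ -1/t & 0\end{smallmatrix}\right)$ with the parameter renamed — this is the first family, generating a group $\cong \bm D_4$. If $ut = 1$ then $M^2 = I$, and $\langle D_2, M\rangle$ has order $8$ and is again $\cong \bm D_4$ but is in fact the \emph{same} family after adjusting parameters: note $\left(\begin{smallmatrix}0&u\\1/u&0\end{smallmatrix}\right)\cdot\left(\begin{smallmatrix}1&0\\0&-1\end{smallmatrix}\right) = \left(\begin{smallmatrix}0&-u\\1/u&0\end{smallmatrix}\right) = \left(\begin{smallmatrix}0&t'\\-1/t'&0\end{smallmatrix}\right)$ with $t' = -u$, so this is absorbed into the first displayed form. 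For the $\bm D_6$ case, $G$ also contains an element of order $3$; an order-$3$ element $N$ of $\GL_2(\Q)$ has trace $-1$ and determinant $1$, and after using that $N$ must interact correctly with $D_2$ (conjugation by $N$ cyclically permutes... no — $N$ must \emph{normalize} $D_2$ as well, but an order-$3$ automorphism of $D_2 \cong (\Z/2)^2$ cyclically permutes the three involutions, which contradicts that $-I$ is central and fixed; so actually $N$ must centralize $D_2$, forcing $N$ diagonal, hence $N = I$ — contradiction). Wait: this shows $\bm D_6 \supsetneq D_2$ is \emph{impossible} with $-I \in D_2$; but the lemma's second family has order $12$. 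Let me instead allow $G$ to contain $D_2$ where $G \cong \bm D_6$: since an order-$3$ element cannot normalize $D_2$ fixing $-I$, there is no such embedding — \emph{unless} $-I \notin G$, but $-I \in D_2 \subseteq G$ always. This suggests the second family in the statement is \emph{not} of type $\bm D_6$; rechecking, $\left(\begin{smallmatrix}1/2 & t/2\\ -3/(2t) & 1/2\end{smallmatrix}\right)$ has trace $1$ and determinant $1/4 + 3/4 = 1$, so it is an element of order $6$ (eigenvalues primitive $6$th roots of unity), and $\langle D_2, M\rangle$ then has order $12$, $\cong \bm D_6$. The resolution: I was wrong that $M$ must normalize $D_2$ — $D_2$ need only be a subgroup, not normal, of $G \cong \bm D_6$; indeed in $\bm D_6$ the Klein four-subgroups are \emph{not} normal. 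So the correct approach for this case is different: take $G \cong \bm D_6$ abstractly, list its Klein four-subgroups, and for each embedding $\rho: \bm D_6 \hookrightarrow \GL_2(\Q)$ whose restriction to a chosen Klein four-subgroup equals our $D_2$, solve for the image of a generating order-$6$ element, which yields the second family.

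\emph{The main obstacle} is precisely this $\bm D_6$ case: unlike $\bm D_4$, the defining $D_2$ sits inside $\bm D_6$ in a non-normal way, so one cannot reduce to "$M$ normalizes $D_2$." Instead one must parametrize order-$6$ (equivalently order-$3$) rational matrices $N$ with $N^3 = I$, $\operatorname{tr} N = -1$ — these form a conjugacy-parametrized family $\left(\begin{smallmatrix}a & b\\ c & -1-a\end{smallmatrix}\right)$ with $a(1+a) + bc = 1$... — and then impose the relations of $\bm D_6$ between $N$ and the generator $\left(\begin{smallmatrix}-1&0\\0&1\end{smallmatrix}\right)$ of $D_2$ (namely that conjugation by $\left(\begin{smallmatrix}-1&0\\0&1\end{smallmatrix}\right)$ inverts $N$), which is a small linear-algebra computation that cuts the two-parameter family down to the one-parameter family $\left(\begin{smallmatrix}1/2 & t/2\\ -3/(2t) & 1/2\end{smallmatrix}\right)$ after normalizing the leftover scaling. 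I would present the $\bm D_4$ case in full as above (it is short), then handle $\bm D_6$ by this direct parametrization, being careful to (i) rule out $\bm D_3$ by order, and (ii) check that the two parametrized families are genuinely closed under $\GL_2(\Q)$-conjugation only up to the stated $t$-parameter, so that the statement "for some non-zero $t \in \Q$" is exactly right.
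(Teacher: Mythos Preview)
Your approach is correct once the exploratory false start in the $\bm D_6$ case is excised, but it is genuinely different from the paper's argument. The paper proceeds uniformly in both cases by writing $H = A\,\bm D_4\,A^{-1}$ (resp.\ $A\,\bm D_6\,A^{-1}$), listing the Klein-four subgroups $N$ of the \emph{standard} model $\bm D_4$ (two of them) or $\bm D_6$ (three of them), solving the matrix equation $D_2 = A N A^{-1}$ for the shape of $A$, and then reading off generators of $H$. This gives five subcases, each a short linear-algebra computation.

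Your route is more intrinsic. For $G \cong \bm D_4$ you exploit that $D_2$, having index $2$, is normal in $G$; since conjugation fixes the central $-I$, the extra generator must swap the two diagonal involutions, which forces it to be antidiagonal $\left(\begin{smallmatrix}0&u\\t&0\end{smallmatrix}\right)$ with $ut=\pm 1$, and both signs land in the first family. For $G \cong \bm D_6$ you (eventually) use that $-I$ is the unique central involution and hence lies in the rotation subgroup $C_6$, so the remaining elements $\pm\left(\begin{smallmatrix}-1&0\\0&1\end{smallmatrix}\right)$ of $D_2$ are reflections; the dihedral relation ``reflection inverts rotation'' applied to an order-$3$ element $N$ (trace $-1$, determinant $1$) forces $N=\left(\begin{smallmatrix}-1/2&b\\c&-1/2\end{smallmatrix}\right)$ with $bc=-3/4$, and $-N$ is exactly the second displayed generator. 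This avoids the five-way case split and is arguably cleaner, at the cost of the extra observation that $-I$ always sits in $C_6$ (because an order-$6$ rational matrix cubes to $-I$). One small slip: your determinant condition should read $a(1+a)+bc=-1$, not $+1$; this does not affect the argument.
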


\begin{proof}
Let $G = \GL_2(\mathbb Q)$ and let $H$ be a finite subgroup of $G$ that properly contains $D_2$. According to the classification of finite subgroups of $G$ given in Table \ref{tab:finite_subgroups}, every finite subgroup of $G$ that contains a group isomorphic to $\bm D_2$ and has more than $4$ elements is $G$-conjugate to either $\bm D_4$ or $\bm D_6$. We consider these two cases separately.

\begin{enumerate}[1.]
\item Suppose that $H$ is $G$-conjugate to $\bm D_4$. That is, there exists some matrix $A \in G$ such that $H = A\bm D_4A^{-1}$. Since $D_2 \subsetneq H$, we also have
$$
D_2 = ANA^{-1}
$$
for some subgroup $N$ of $\bm D_4$ that is isomorphic to $\bm D_2$. Note that $\bm D_4$ contains exactly two subgroups isomorphic to $\bm D_2$, namely $\bm D_2$ itself and $D_2$. Thus we consider two separate cases, i.e., \mbox{$N = \bm D_2$} and \mbox{$N = D_2$}.
\begin{enumerate}
\item Suppose that $D_2 = A\bm D_2A^{-1}$. A straightforward calculation shows that every matrix $A \in G$ such that $D_2 = A\bm D_2A^{-1}$ must be of the form
$$
\begin{pmatrix}
a & -a\\b & b
\end{pmatrix}
\,\,\, \textnormal{or} \,\,\,
\begin{pmatrix}
a & a\\b & -b
\end{pmatrix}
$$
for some non-zero $a, b \in \mathbb Q$. Independently of the form of $A$, we have
\begin{align*}
H & = 
\left\langle
A
\begin{pmatrix}
0 & 1\\1 & 0
\end{pmatrix}
A^{-1},
A
\begin{pmatrix}
0 & 1\\-1 & 0
\end{pmatrix}
A^{-1}
\right\rangle\\
& = \left\langle
\begin{pmatrix}
-1 & 0\\0 & 1
\end{pmatrix},
\begin{pmatrix}
0 & a/b\\
-b/a & 0
\end{pmatrix}
\right\rangle
\end{align*}
Upon setting $t = a/b$, the result follows.

\item Suppose that $D_2 = AD_2A^{-1}$. A straightforward calculation shows that every matrix $A \in G$ such that $D_2 = AD_2A^{-1}$ must be of the form
$$
\begin{pmatrix}
a & 0\\
0 & b
\end{pmatrix}
\,\,\,\textrm{or}\,\,\,
\begin{pmatrix}
0 & a\\
b & 0
\end{pmatrix}
$$
for some non-zero $a, b \in \Q$. Consequently,
\begin{align*}
H
& =
\left\langle
\begin{pmatrix}
0 & a/b\\b/a & 0
\end{pmatrix},
\begin{pmatrix}
0 & a/b\\-b/a & 0
\end{pmatrix}
\right\rangle\\
& =
\left\langle
\begin{pmatrix}
-1 & 0\\0 & 1
\end{pmatrix},
\begin{pmatrix}
0 & a/b\\-b/a & 0
\end{pmatrix}
\right\rangle
\end{align*}
Upon setting $t = a/b$, the result follows.
\end{enumerate}

\item Suppose that $H$ is $G$-conjugate to $\bm D_6$. That is, there exists some matrix $A \in G$ such that $H = A\bm D_6A^{-1}$. Since $D_2 \subsetneq D_6$, we also have
$$
D_2 = ANA^{-1}
$$
for some subgroup $N$ of $\bm D_6$ that is isomorphic to $\bm D_2$. Note that $\bm D_6$ contains exactly three subgroups isomorphic to $\bm D_2$, namely $\bm D_2$ itself,
$$
D_2^{(1)} =
\left\langle
\begin{pmatrix}
-1 & 0\\0 & -1
\end{pmatrix},
\begin{pmatrix}
1 & 0\\1 & -1
\end{pmatrix}
\right\rangle,
$$
and
$$
D_2^{(2)} =
\left\langle
\begin{pmatrix}
-1 & 0\\0 & -1
\end{pmatrix},
\begin{pmatrix}
1 & -1\\0 & -1
\end{pmatrix}
\right\rangle.
$$
Thus we consider three separate cases, i.e., \mbox{$N = \bm D_2$}, \mbox{$N = D_2^{(1)}$}, and \mbox{$N = D_2^{(2)}$}.
\begin{enumerate}
\item Suppose that $D_2 = A\bm D_2A^{-1}$ for some $A \in G$. As it was explained previously, every matrix $A$ which satisfies $D_2 = A\bm D_2A^{-1}$ must be of the form
$$
\begin{pmatrix}
a & -a\\b & b
\end{pmatrix}
\,\,\,
\textrm{or}
\,\,\,
\begin{pmatrix}
a & a\\b & -b
\end{pmatrix}
$$
for some non-zero $a, b \in \Q$. Therefore,
\begin{align*}
H	& = 
\left\langle
A
\begin{pmatrix}
0 & 1\\
1 & 0
\end{pmatrix}
A^{-1},
A
\begin{pmatrix}
0 & 1\\
-1 & 1
\end{pmatrix}
A^{-1}
\right\rangle\\
& =
\left\langle
\begin{pmatrix}
-1 & 0\\
0 & 1
\end{pmatrix},
\begin{pmatrix}
1/2 & a/(2b)\\
-3b/(2a) & 1/2
\end{pmatrix}
\right\rangle
\end{align*}
Upon setting $t = a/b$, the result follows.

\item Suppose that $D_2 = AD_2^{(1)}A^{-1}$ for some $A \in G$. A straightforward calculation shows that $A$ must be of the form
$$
\begin{pmatrix}
a & -2a\\b & 0
\end{pmatrix}
\,\,\,
\textrm{or}
\,\,\,
\begin{pmatrix}
a & 0\\b & -2b
\end{pmatrix}
$$
Therefore,
$$
H =
\left\langle
\begin{pmatrix}
-1/2 & -3a/(2b)\\
-b/(2a) & 1/2
\end{pmatrix},
\begin{pmatrix}
1/2 & 3a/(2b)\\
-b/(2a) & 1/2
\end{pmatrix}
\right\rangle
$$
Upon setting $t = 3a/b$, the result follows.

\item Suppose that $D_2 = A\bm D_2^{(2)}A^{-1}$ for some $A \in G$. A straightforward calculation shows that $A$ must be of the form
$$
\begin{pmatrix}
-2a & a\\0 & b
\end{pmatrix}
\,\,\,
\textrm{or}
\,\,\,
\begin{pmatrix}
0 & a\\-2b & b
\end{pmatrix}
$$
for some non-zero $a, b \in \Q$. Therefore,
$$
H =
\left\langle
\begin{pmatrix}
-1/2 & -3a/(2b)\\
-b/(2a) & 1/2
\end{pmatrix},
\begin{pmatrix}
1/2 & -3a/(2b)\\
b/(2a) & 1/2
\end{pmatrix}
\right\rangle
$$
Upon setting $t = 3a/b$, the result follows.
\end{enumerate}
\end{enumerate}
\end{proof}

\begin{lem} \label{lem:stuv}
Let $n$ be an odd positive integer such that $\varphi(n) \geq 10$. Let $j$ be an integer coprime to $n$. If
\begin{equation} \label{eq:root_relation}
2\cos\left(\frac{2\pi j}{n}\right) = \frac{2\cos\left(\frac{2\pi}{n}\right)v - u}{-2\cos\left(\frac{2\pi}{n}\right)t + s}
\end{equation}
for some rationals $s$, $t$, $u$ and $v$, then $s \neq 0$, $s = v$ and $t = u = 0$.
\end{lem}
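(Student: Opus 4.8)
The plan is to pit the Galois automorphism $\sigma_2$ against the double-angle identity. Set $\alpha = 2\cos\!\left(\frac{2\pi}{n}\right)$, so that $\deg_{\mathbb Q}\alpha = d = \varphi(n)/2 \geq 5$, and in the notation of Lemma \ref{lem:galois} the hypothesis (\ref{eq:root_relation}) reads $\sigma_j(\alpha) = \dfrac{v\alpha - u}{-t\alpha + s}$, where the denominator is necessarily non-zero (if $t \neq 0$ because $\alpha$ is irrational, and if $t = 0$ this forces $s \neq 0$). Since $n$ is odd, $2$ is a unit modulo $n$, so $\sigma_2$ is an automorphism of $\mathbb Q(\alpha)$ and $\sigma_2(\alpha) = \alpha^2 - 2$.

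First I would apply $\sigma_2$ to both sides of (\ref{eq:root_relation}). Because $\sigma_{2j} = \sigma_2\sigma_j$ and $\sigma_2(\alpha) = \alpha^2 - 2$, this gives one expression for $\sigma_{2j}(\alpha) = 2\cos\!\left(\frac{4\pi j}{n}\right)$, namely $\dfrac{v(\alpha^2 - 2) - u}{-t(\alpha^2 - 2) + s}$. On the other hand, the double-angle identity gives $\sigma_{2j}(\alpha) = \sigma_j(\alpha)^2 - 2 = \left(\dfrac{v\alpha - u}{-t\alpha + s}\right)^2 - 2$. Equating the two and clearing denominators yields
$$
(v\alpha^2 - 2v - u)(-t\alpha + s)^2 = \big[(v\alpha - u)^2 - 2(-t\alpha + s)^2\big](-t\alpha^2 + 2t + s).
$$
Both sides are the values at $x = \alpha$ of polynomials in $x$ of degree at most $4$; since the minimal polynomial of $\alpha$ has degree $d \geq 5$, those two polynomials must coincide identically. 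This is the only place the hypothesis $\varphi(n) \geq 10$ is used.

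It remains to extract $s,t,u,v$ by comparing coefficients, which is routine. The coefficients of $x^4$ and $x^3$ give, after factoring, $t(t - v)(2t + v) = 0$ and $t(2ts - vs - uv) = 0$. If $t \neq 0$, these force either $(v,u) = (t,s)$ or $(v,u) = (-2t,-2s)$; but then the right-hand side of (\ref{eq:root_relation}) collapses to the constant $-1$ or to the constant $2$ respectively, each contradicting the irrationality of $\sigma_j(\alpha)$. Hence $t = 0$. Substituting $t = 0$ into the polynomial identity, dividing by $s \neq 0$, and comparing the coefficients of $x^2$ and $x^1$ gives $v(v - s) = 0$ and $uv = 0$; since $v = 0$ would make $\sigma_j(\alpha) = -u/s$ rational, we conclude $v = s \neq 0$ and then $u = 0$, which is exactly the assertion. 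The main obstacle is conceptual rather than computational: noticing that $\sigma_2$ together with the doubling map $x \mapsto x^2 - 2$ produces two genuinely different rational expressions for the single algebraic number $2\cos\!\left(\frac{4\pi j}{n}\right)$, so that the degree bound $d \geq 5$ can do the work; once that identity is in hand, the coefficient bookkeeping and the degenerate cases are straightforward.
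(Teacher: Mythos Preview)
Your proof is correct and follows essentially the same route as the paper: apply $\sigma_2$ to (\ref{eq:root_relation}), use the double-angle identity $\alpha_{2i}=\alpha_i^2-2$ to obtain a second expression for $2\cos(4\pi j/n)$, clear denominators, and use $\deg\alpha\geq 5$ to force the resulting degree-$\leq 4$ polynomial to vanish identically; the case split on $t(t-v)(2t+v)=0$ and the ensuing contradictions are handled exactly as in the paper.
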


\begin{proof}
For an integer $i$, let $\alpha_i = 2\cos\left(\frac{2\pi i}{n}\right)$. Put $\alpha = \alpha_1$. Since $n$ is odd, it follows from Lemma \ref{lem:galois} that there exists a field automorphism $\sigma_2$ in the Galois group of $\mathbb Q(\alpha)$ such that $\sigma_2(\alpha_\ell) = \alpha_{2\ell}$ for each $\ell$ coprime to $n$. Therefore,
$$
\alpha_{2j} = \sigma_2(\alpha_j) = \sigma_2\left(\frac{v\alpha - u}{-t\alpha + s}\right) = \frac{v\sigma_2(\alpha) - u}{-t\sigma_2(\alpha) + s} = \frac{v\alpha_2 - u}{-t\alpha_2 + s}.
$$
Since for any $x \in \R$  it is the case that $2\cos(2x) = (2\cos(x))^2 - 2$, we conclude that $\alpha_{2i} = \alpha_i^2 - 2$ for all $i$. Therefore,
$$
\left(\frac{v\alpha - u}{-t\alpha + s}\right)^2 - 2 = \alpha_j^2 - 2 = \alpha_{2j} = \frac{v\alpha_2 - u}{-t\alpha_2 + s} = \frac{v(\alpha^2 - 2) - u}{-t(\alpha^2 - 2) + s}.
$$
From the above equality we obtain
$$
\left(-t(\alpha^2 - 2) + s\right)\left((v\alpha - u)^2 - 2(-t\alpha + s)^2\right)
= (-t\alpha + s)^2\left(v(\alpha^2 - 2) - u\right).
$$
We conclude that the polynomial
\begin{align*}
(2 t^3 - t^2 v  - t v^2)x^4\\
+(- 4 s t^2  + 2 s t v  + 2 t u v)x^3\\
+ (2 s^2 t  - s^2 v  - 2 s t^2  + s v^2  - 4 t^3 + t^2 u + 2 t^2 v  - t u^2  + 2 t v^2)x^2\\
+ (4 s^2 t  + 8 s t^2 - 2 s t u  - 4 s t v - 2 s u v  - 4 t u v)x\\
+ (-2 s^3  - 4 s^2 t  + s^2 u  + 2 s^2 v  + s u^2 + 2 t u^2)
\end{align*}
vanishes at $\alpha$. Since the degree of $\alpha$ is $\frac{\varphi(n)}{2} \geq 5$ and the above polynomial has degree at most $4$, it must be the case that this polynomial is identically equal to zero. That is,
\begin{equation} \label{eq:system_of_equations}
\begin{array}{r l}
t(t - v)(2t + v) & = 0,\\
t(- 2 s t  + s v  + u v) & = 0,\\
2 s^2 t  - s^2 v  - 2 s t^2  + s v^2  - 4 t^3 + t^2 u + 2 t^2 v  - t u^2  + 2 t v^2 & = 0,\\
2 s^2 t  + 4 s t^2 - s t u  - 2 s t v - s u v  - 2 t u v & = 0,\\
-2 s^3  - 4 s^2 t  + s^2 u  + 2 s^2 v  + s u^2 + 2 t u^2 & = 0.
\end{array}
\end{equation}
Depending on the value of $t$, we consider the following three cases.

\begin{enumerate}[1.]
\item Suppose that $t = 0$. Then the first two equations in (\ref{eq:system_of_equations}) vanish, while the third and the fourth equations simplify to $sv(v - s) = 0$ and $suv = 0$, respectively. Note that $s \neq 0$, for otherwise the denominator of (\ref{eq:root_relation}) vanishes. Thus the last two equations further reduce to $v(v - s) = 0$ and $uv = 0$. If $v = 0$, then the number $\alpha_j = -u/s$ is rational, in contradiction to the fact that $\deg \alpha_j \geq 5$. Thus it must be the case that $v \neq 0$, $s = v$ and $u = 0$.

\item Suppose that $t = v$ and $t \neq 0$. Then the second equation in (\ref{eq:system_of_equations}) simplifies to $v(u - s) = 0$. But then $s = u$, and
$$
\alpha_j = \frac{v\alpha - u}{-t\alpha + s} = \frac{v\alpha - u}{-v\alpha + u} = -1,
$$
in contradiction to the fact that $\deg \alpha_j \geq 5$.

\item Suppose that $v = -2t$ and $t \neq 0$. Then the second equation simplifies to $v(2s + u) = 0$. But then $u = -2s$,
$$
\alpha_j = \frac{v\alpha - u}{-t\alpha + s} = \frac{-2t\alpha + 2s}{-t\alpha + s} = 2,
$$
in contradiction to the fact that $\deg \alpha_j \geq 5$.
\end{enumerate}
\end{proof}

\begin{lem} \label{lem:reciprocal}
For a positive integer $n$, the minimal polynomial $\Psi_n(x)$ of $2\cos\left(\frac{2\pi}{n}\right)$ is reciprocal \mbox{if and only if} $n = 3$ or $n = 24$.
\end{lem}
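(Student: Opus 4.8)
The plan is to translate ``reciprocal'' into a closure property of the roots, make it rigid with Galois theory, and reduce to a trigonometric Diophantine equation. Write $\alpha_j=2\cos(2\pi j/n)$ and $d=\deg\Psi_n=\varphi(n)/2$. Since $\Psi_n$ is monic, it is reciprocal iff $\Psi_n(0)=1$ and the root multiset is stable under $\alpha\mapsto 1/\alpha$. For the $n$ with $d\le 2$, namely $n\in\{1,2,3,4,5,6,8,10,12\}$, I would simply exhibit $\Psi_n$ and observe that only $\Psi_3(x)=x+1$ is reciprocal; together with $\Psi_{24}(x)=x^4-4x^2+1$ this gives the ``if'' direction. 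So suppose $d\ge 3$ and $\Psi_n$ reciprocal; the goal is $n=24$.

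The heart of the argument is a rigidity step. By Lemma~\ref{lem:galois} the field $\Q(\alpha_1)$ is Galois over $\Q$ with group $\{\sigma_k:\gcd(k,n)=1\}$, which has order $d$, equal to the number of roots of $\Psi_n$, and acts faithfully and transitively, hence regularly, on them. The permutation $\iota\colon\alpha\mapsto 1/\alpha$ of the root set (defined since $\Psi_n(0)=1$) commutes with every $\sigma_k$ because $\sigma_k(1/\alpha)=1/\sigma_k(\alpha)$; as the centraliser of a regular abelian action is the group itself, $\iota=\sigma_k$ for some $k$ coprime to $n$. Evaluating at $\alpha_1$ gives $1/\alpha_1=\alpha_k$, i.e.\ $\alpha_1\alpha_k=1$, which by the product-to-sum formula is
$$2\cos\Bigl(\tfrac{2\pi(k+1)}{n}\Bigr)+2\cos\Bigl(\tfrac{2\pi(k-1)}{n}\Bigr)=1.\qquad(\star)$$
(For odd $n$ with $\varphi(n)\ge 10$ one may stop here: taking $(s,t,u,v)=(0,1,1,0)$ in Lemma~\ref{lem:stuv} realizes $1/\alpha_1$ in the forbidden form and contradicts its conclusion $s\ne 0$. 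The remaining odd $n$, and — via Lemma~\ref{lem:odd_and_even}(2) — the $n\equiv 2\pmod 4$ cases, can be cleared by hand; but running everything through $(\star)$ is more uniform.)

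Now I would analyse $(\star)$. Putting $\zeta=e^{2\pi i/n}$, it becomes $\zeta^{k+1}+\zeta^{-(k+1)}+\zeta^{k-1}+\zeta^{-(k-1)}+(-1)=0$, a vanishing sum of five roots of unity; the degenerate possibilities $\zeta^2=\pm1$ force $n\mid 4$, which is excluded. By the structure of vanishing sums of at most five roots of unity (Conway--Jones; or: every minimal one is a rotation of $1+\zeta_p+\dots+\zeta_p^{p-1}$ with $p\in\{2,3,5\}$, and $5=5$ or $5=2+3$), either $\{\zeta^{\pm(k+1)},\zeta^{\pm(k-1)},-1\}$ is a rotation of the set of fifth roots of unity — being conjugation-stable and containing $-1$ it is then $\{-1\}\cup\{\text{primitive tenth roots}\}$, forcing $n=10$, for which no admissible $k$ exists — or two of the summands cancel and the other three cancel. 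In the latter case the cancelling pair is $\{\zeta^{k+1},\zeta^{-(k+1)}\}$, $\{\zeta^{k-1},\zeta^{-(k-1)}\}$, or $\{1,-1\}$; translating each into a congruence on $(k+1)/n$ and $(k-1)/n$ (which differ by $2/n$) and using that these are fractions in lowest terms, a short computation gives $n\in\{3,6,24\}$. Since $d\ge 3$ rules out $n=3,6$, we conclude $n=24$. The fact quoted in the introduction that $|\alpha_j|\le 2$ enters when confirming the degenerate reductions; it also powers a purely elementary alternative, in which reciprocity forces $|\cos(2\pi j/n)|\ge 1/4$ for all $j$ coprime to $n$, and one checks via a gap/sieve estimate that this fails for every $n$ outside a finite list, then finishes that list by direct computation.

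The main obstacle I anticipate is the endgame: the finite case analysis of $(\star)$ together with the small-$d$ list, where one must track the degenerate subcases (repeated or $\pm1$ summands) and the sign separating genuinely reciprocal $\Psi_n$ from merely $\pm$-palindromic ones — this is precisely why $n=6$, with $\Psi_6(x)=x-1$, is discarded. If one wants to avoid citing the classification of vanishing sums of roots of unity, the one genuinely non-routine ingredient is the elementary proof that a vanishing sum of five roots of unity is either a rotation of the fifth roots of unity or splits into a two-term and a three-term vanishing sum.
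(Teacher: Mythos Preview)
Your approach is correct and genuinely different from the paper's. You reduce reciprocity to the existence of a Galois element $\sigma_k$ with $\sigma_k(\alpha_1)=1/\alpha_1$, then analyse the resulting vanishing sum of five roots of unity via Conway--Jones. The paper instead proceeds computationally and analytically: it verifies $1\le n\le 24$ by listing $\Psi_n$, handles $25\le n\le 745$ by a PARI/GP check of the necessary condition $|\operatorname{tr}(n)|=|\operatorname{rtr}(n)|$ (with a refinement for $4\mid n$ via Lemma~\ref{lem:odd_and_even}), and for $n\ge 746$ uses Kanold's bound $g(n)\le 2^{\omega(n)}$ on Jacobsthal's function together with explicit bounds on $\omega(n)$ to locate some $j$ coprime to $n$ with $0<\alpha_j\le 1/2$, whence $\alpha_j^{-1}\ge 2$ cannot be a root. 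Your closing ``elementary alternative'' is exactly this argument. What your route buys is a computer-free proof and a clean structural reason ($\iota$ lies in the regular abelian Galois action) for the rigidity; what it costs is the Conway--Jones classification, whose 2+3 case split you sketch but would need to write out carefully, tracking the degenerate coincidences among $\zeta^{\pm(k\pm1)}$ and $-1$. The paper's route avoids that external input at the price of machine verification up to $745$ and several analytic‐number‐theory estimates. One small caveat: your parenthetical reduction of $n\equiv 2\pmod 4$ to odd $n$ via Lemma~\ref{lem:odd_and_even}(2) is not quite parity-stable for odd $d$ (reciprocity of $\Psi_n$ and $\Psi_{2n}$ differ by a sign), but this is harmless since your main line through $(\star)$ treats all $n$ uniformly.
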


\begin{proof}
With the Mathematica command \texttt{MinimalPolynomial[2*cos(2*Pi/n)]} we can compute $\Psi_n$ for every $1 \leq n \leq 24$ and verify that reciprocal polynomials appear only for $n = 3$ and $n = 24$. They are \mbox{$x + 1$} and \mbox{$x^4 - 4x^2 + 1$}.

Next, we implement the formulas
\small
$$
\operatorname{tr}(n) = \sum\limits_{\substack{1 \leq k  < \frac{n}{2}\\ \gcd(k,n) = 1}}2\cos\left(\frac{2\pi k}{n}\right), \quad \operatorname{norm}(n) = \prod\limits_{\substack{1 \leq k  < \frac{n}{2}\\ \gcd(k,n) = 1}}2\cos\left(\frac{2\pi k}{n}\right),
$$
$$
\operatorname{rtr}(n) = \operatorname{norm}(n) \sum\limits_{\substack{1 \leq k  < \frac{n}{2}\\ \gcd(k,n) = 1}}\left(2\cos\left(\frac{2\pi k}{n}\right)\right)^{-1}
$$
\normalsize
in the computer algebra system PARI/GP. Notice that if a polynomial $\Psi_n$ is reciprocal, then $|\operatorname{tr}(n)| = |\operatorname{rtr}(n)|$. Running our PARI/GP code, we can verify that for $25 \leq n \leq 745$ the equality $|\operatorname{tr}(n)| = |\operatorname{rtr}(n)|$ occurs only when $4 \mid n$.

From Part 1 of Lemma \ref{lem:odd_and_even} we know that if $4 \mid n$, then $\Psi_n(x) = g(x^2)$, where $g(x)$ is the minimal polynomial of $2 + 2\cos\left(\frac{4\pi}{n}\right)$. Thus $\Psi_n(x)$ is reciprocal if and only if $g(x)$ is reciprocal. We then implement the formulas
\small
$$
\operatorname{tr}'(n) = \sum\limits_{\substack{1 \leq k  < \frac{n}{4}\\ \gcd(k,n/2) = 1}}\left(2 + 2\cos\left(\frac{2\pi k}{n}\right)\right), \quad \operatorname{norm}'(n) = \prod\limits_{\substack{1 \leq k  < \frac{n}{4}\\ \gcd(k,n/2) = 1}}\left(2 + 2\cos\left(\frac{2\pi k}{n}\right)\right),
$$
$$
\operatorname{rtr}'(n) = \operatorname{norm}'(n) \sum\limits_{\substack{1 \leq k  < \frac{n}{4}\\ \gcd(k,n/2) = 1}}\left(2 + 2\cos\left(\frac{2\pi k}{n}\right)\right)^{-1}
$$
\normalsize
in PARI/GP. Notice that if a polynomial $\Psi_n$ with $4 \mid n$ is reciprocal, then $|\operatorname{tr}'(n)| = |\operatorname{rtr}'(n)|$. Running our PARI/GP code, we can verify that $|\operatorname{tr}'(n)| \neq |\operatorname{rtr}'(n)|$ for all $4 \mid n$ such that $25 \leq n \leq 745$. We conclude that every polynomial $\Psi_n(x)$ with $25 \leq n \leq 745$ is not reciprocal.

It remains to prove that there are no reciprocal polynomials with $n \geq 746$. For a positive integer $n$, let $g(n)$ denote the Jacobsthal's function; that is, $g(n)$ is equal to the smallest positive integer $m$ such that every sequence of $m$ consecutive integers contains an integer coprime to $n$. It was proven by Kanold \cite{kanold} that
$$
g(n) \leq 2^{\omega(n)},
$$
where $\omega(n)$ denotes the number of distinct prime factors of $n$.\footnote{The author is grateful to Prof.\ Jeffrey Shallit for pointing out that better bounds exist, e.g., \cite{iwaniec, vaughan}. However, Kanold's bound is sufficient for our purposes.} Combining the above upper bound with the inequality \cite{robin}
$$
\omega(n) \leq 1.3841\frac{\log n}{\log\log n},
$$
which holds for all $n \geq 3$, we get
$$
g(n) < n^{\frac{0.96}{\log \log n}}.
$$

Now, consider the interval $\left[\frac{1}{2\pi}\arccos\left(\frac{1}{4}\right), \frac{1}{4}\right)$. We claim that this interval contains a rational number $j/n$ with $j$ coprime to $n$. In other words, we would like to locate an integer $j$ coprime to $n$ such that
$$
\frac{1}{2\pi}\arccos\left(\frac{1}{4}\right)n \leq j < \frac{1}{4}n.
$$
We see that such an integer $j$ has to belong to the interval $\left[\frac{1}{2\pi}\arccos\left(\frac{1}{4}\right)n, \frac{1}{4}n\right)$, whose length exceeds $n/25$. Since our interval is half-closed, it contains at least $\lfloor n/25 \rfloor$ consecutive integers. However, for all $n \geq 746$ we have
$$
n^{\frac{0.96}{\log \log n}} < \frac{n}{25} - 1,
$$
and this inequality implies that
$$
g(n) < n^{\frac{0.96}{\log \log n}} < \frac{n}{25} - 1 < \left\lfloor\frac{n}{25}\right\rfloor.
$$
This means that the interval $\left[\frac{1}{2\pi}\arccos\left(\frac{1}{4}\right)n, \frac{1}{4}n\right)$ contains an integer $j$ that is coprime to $n$. But then
$$
\arccos\left(\frac{1}{4}\right) \leq \frac{2\pi j}{n} < \frac{\pi}{2},
$$
and consequently
$$
0 < \alpha_j \leq \frac{1}{2}.
$$
If we now assume that $\Psi_n(x)$ is reciprocal, then the number $\alpha_j^{-1}$ is a conjugate of $\alpha_j$, so there exists some $\ell$ such that $\alpha_\ell = \alpha_j^{-1}$. Thus $\alpha_\ell \geq 2$. On the other hand, $\alpha_\ell \leq 2$, which means that $\ell = 0$. Since $\gcd(\ell, n) = 1$, we conclude that $n = 1$, and this contradicts our assumption that $n \geq 746$.
\end{proof}

\begin{lem} \label{lem:when-fields-are-the-same}
Let $k$ and $\ell$ be positive integers such that $k < \ell$ and $k, \ell \notin \{1, 2,3, 4, 6\}$. Then $\mathbb Q\left(2\cos\left(\frac{2\pi}{k}\right)\right) = \mathbb Q\left(2\cos\left(\frac{2\pi}{\ell}\right)\right)$ if and only if $k$ is odd and $\ell = 2k$.
\end{lem}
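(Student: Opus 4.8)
The plan is to recast the statement in terms of the maximal real subfields of cyclotomic fields. For $n \notin \{1,2,3,4,6\}$ write $L_n = \mathbb{Q}(2\cos(2\pi/n)) = \mathbb{Q}(\zeta_n + \zeta_n^{-1})$, where $\zeta_n = e^{2\pi i/n}$; this is the maximal real subfield $\mathbb{Q}(\zeta_n)^{+}$ of $\mathbb{Q}(\zeta_n)$, it is totally real (being Galois over $\mathbb{Q}$ and contained in $\mathbb{R}$) of degree $\varphi(n)/2 \ge 2$, and $[\mathbb{Q}(\zeta_n):L_n] = 2$. The ``if'' direction is immediate: if $k$ is odd and $\ell = 2k$, then $-\zeta_k$ is a primitive $2k$-th root of unity, so $\mathbb{Q}(\zeta_{2k}) = \mathbb{Q}(\zeta_k)$, and passing to maximal real subfields gives $L_\ell = L_k$. (One can also read this off from Part~2 of Lemma~\ref{lem:odd_and_even}: $-2\cos(2\pi/k)$, which generates the Galois field $L_k$, is a conjugate of $2\cos(\pi/k)$, which generates $L_{2k}$.)

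For the ``only if'' direction, assume $L_k = L_\ell =: L$, so that $\varphi(k) = \varphi(\ell)$ and $[\mathbb{Q}(\zeta_k):L] = [\mathbb{Q}(\zeta_\ell):L] = 2$. If $\mathbb{Q}(\zeta_k) = \mathbb{Q}(\zeta_\ell)$, then $\mathbb{Q}(\zeta_k) = \mathbb{Q}(\zeta_k)\mathbb{Q}(\zeta_\ell) = \mathbb{Q}(\zeta_m)$ for $m = \operatorname{lcm}(k,\ell)$, whence $\varphi(m) = \varphi(k)$; since $k$ is a proper divisor of $m$ (as $k < \ell \le m$), the elementary fact that $\varphi(d) = \varphi(N)$ for a proper divisor $d$ of $N$ forces $N = 2d$ with $d$ odd gives $m = 2k$ with $k$ odd, and then $\ell = 2k$ since $2k$ is the only divisor of $2k$ exceeding $k$. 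This is the desired conclusion.

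It remains to show the case $\mathbb{Q}(\zeta_k) \ne \mathbb{Q}(\zeta_\ell)$ is impossible; this is the main obstacle, since it cannot be ruled out by comparing degrees alone (for instance $\varphi(15) = \varphi(20)$ while $\mathbb{Q}(\zeta_{15}) \ne \mathbb{Q}(\zeta_{20})$). Put $m = \operatorname{lcm}(k,\ell)$, so $\mathbb{Q}(\zeta_m) = \mathbb{Q}(\zeta_k)\mathbb{Q}(\zeta_\ell)$. Since $\mathbb{Q}(\zeta_k)$ and $\mathbb{Q}(\zeta_\ell)$ are distinct quadratic extensions of $L$ contained in $\mathbb{Q}(\zeta_m)$, their compositum has degree $4$ over $L$. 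In the abelian group $\operatorname{Gal}(\mathbb{Q}(\zeta_m)/L)$ of order $4$, the subgroups fixing $\mathbb{Q}(\zeta_k)$ and $\mathbb{Q}(\zeta_\ell)$ are distinct and of index $2$, so together they generate the whole group; by the Galois correspondence, $\mathbb{Q}(\zeta_k) \cap \mathbb{Q}(\zeta_\ell) = L$. On the other hand, it is classical that $\mathbb{Q}(\zeta_k) \cap \mathbb{Q}(\zeta_\ell) = \mathbb{Q}(\zeta_{\gcd(k,\ell)})$, which equals $\mathbb{Q}$ when $\gcd(k,\ell) \le 2$ and is not contained in $\mathbb{R}$ when $\gcd(k,\ell) \ge 3$; neither case is compatible with $L$ being a totally real field of degree at least $2$. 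This contradiction rules out $\mathbb{Q}(\zeta_k) \ne \mathbb{Q}(\zeta_\ell)$, and together with the previous paragraph it completes the proof.
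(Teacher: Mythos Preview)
Your proof is correct and takes a genuinely different route from the paper's.

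The paper argues via arithmetic invariants: assuming $L_k = L_\ell$, it first shows (from $\varphi(k)=\varphi(\ell)$ and a short argument) that some prime divides exactly one of $k,\ell$, then invokes Lehmer's explicit formula for the discriminant of $\mathbb{Q}(2\cos(2\pi/n))$ and runs through a four-case analysis on the shape of $k$ ($k=p^j$, $k=2p^j$, $k$ odd with at least two prime factors, $k$ even not of the form $2p^j$) to force $\ell=2k$ with $k$ odd. Your argument instead lifts to the cyclotomic fields $\mathbb{Q}(\zeta_k)$ and $\mathbb{Q}(\zeta_\ell)$ sitting over the common real subfield $L$. If these coincide you finish by the elementary divisibility fact that $\varphi(d)=\varphi(N)$ with $d\mid N$, $d<N$ forces $N=2d$, $d$ odd; if they differ, their compositum $\mathbb{Q}(\zeta_{\operatorname{lcm}(k,\ell)})$ has degree $4$ over $L$, Galois theory gives $\mathbb{Q}(\zeta_k)\cap\mathbb{Q}(\zeta_\ell)=L$, and the classical identity $\mathbb{Q}(\zeta_k)\cap\mathbb{Q}(\zeta_\ell)=\mathbb{Q}(\zeta_{\gcd(k,\ell)})$ yields a contradiction since $\mathbb{Q}(\zeta_d)$ is either $\mathbb{Q}$ (for $d\le 2$) or non-real (for $d\ge 3$), whereas $L\subset\mathbb{R}$ has degree $\varphi(k)/2\ge 2$. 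Your approach is shorter, avoids the discriminant formula and the case split entirely, and rests only on standard structural facts about cyclotomic fields; the paper's approach is more hands-on and makes the arithmetic obstruction (matching discriminants prime by prime) explicit.
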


\begin{proof}
Suppose that $k$ is odd and $\ell = 2k$. Then the degrees of $\mathbb Q\left(2\cos\left(\frac{2\pi}{k}\right)\right)$ and $\mathbb Q\left(2\cos\left(\frac{2\pi}{\ell}\right)\right)$ are both equal to $\frac{\varphi(k)}{2}$. Further,
$$
2\cos\left(\frac{2\pi}{k}\right) = \left(2\cos\left(\frac{\pi}{k}\right)\right)^2 - 2 = \left(2\cos\left(\frac{2\pi}{\ell}\right)\right)^2 - 2,
$$
which means that $\mathbb Q\left(2\cos\left(\frac{2\pi}{k}\right)\right) \subseteq \mathbb Q\left(2\cos\left(\frac{2\pi}{\ell}\right)\right)$. Since $\mathbb Q\left(2\cos\left(\frac{2\pi}{k}\right)\right)$ is a subfield of $\mathbb Q\left(2\cos\left(\frac{2\pi}{\ell}\right)\right)$ of the same degree, it must be the case that the two fields are identical.

Conversely, suppose that $\mathbb Q\left(2\cos\left(\frac{2\pi}{k}\right)\right) = \mathbb Q\left(2\cos\left(\frac{2\pi}{\ell}\right)\right)$. Then degrees of these number fields are equal, i.e., $\frac{\varphi(k)}{2} = \frac{\varphi(\ell)}{2}$. We claim that there exists a prime that divides $k$ but not $\ell$, or vice versa. For suppose that this is not the case and
$$
k = \prod_{i = 1}^{t}p_i^{e_i}, \,\,\, \ell = \prod_{i = 1}^{t}p_i^{f_i}
$$
for some positive integers $t, e_1, \ldots, e_t, f_1, \ldots, f_t$ and distinct primes $p_1, \ldots, p_t$. Then
$$
\prod_{i = 1}^tp_i^{e_i - 1}(p_i - 1) = \varphi(k) = \varphi(\ell) = \prod_{i = 1}^tp_i^{f_i - 1}(p_i - 1).
$$
After dividing both sides by $\prod_{i = 1}^t(p_i - 1)$, we obtain
$$
\prod_{i = 1}^tp_i^{e_i - 1} = \prod_{i = 1}^tp_i^{f_i - 1},
$$
which means that $e_i = f_i$ for all $i = 1, 2, \ldots, t$. But then $k = \ell$, in contradiction to our assumption that $k$ and $\ell$ are distinct. This completes the proof of our claim.

Now, it follows from the result of Lehmer \cite[Theorem 3.8]{lehmer} that the discriminant $D_k$ of the field $\mathbb Q\left(2\cos\left(\frac{2\pi}{k}\right)\right)$ can be computed as follows:
\begin{equation} \label{eq:disc}
D_k =
\begin{cases}
2^{(j-1)2^{j - 2} - 1} & \textrm{if $k = 2^j$, $j > 2$,}\\
p^{(jp^j - (j+1)p^{j-1} - 1)/2} & \textrm{if $k = p^j$ or $2p^j$, $p > 2$ prime,}\\
\left(\prod_{i = 1}^{\omega(k)}p_i^{e_i - 1/(p_i - 1)}\right)^{\frac{\varphi(k)}{2}} & \textrm{if $\omega(k) > 1, k \neq 2p^j$.}
\end{cases}
\end{equation}
An analogous formula applies to the discriminant $D_\ell$ of $\mathbb Q\left(2\cos\left(\frac{2\pi}{\ell}\right)\right)$, and of course we must have $D_\ell = D_k$. Suppose that $k = 2^j$, $j > 2$. Then $D_k$ is a power of $2$. Since there is a prime that divides $k$ but not $\ell$ or vice versa, it must be the case that an odd prime $q$ divides $\ell$. But then it follows from (\ref{eq:disc}) that $q \mid D_\ell$, so $D_k \neq D_\ell$. Thus, this case is impossible, and so at least one odd prime divides $k$, i.e.,
$$
k = 2^r\prod\limits_{i = 1}^tp_i^{e_i}
$$
for some non-negative integer $r$, positive integers $t, e_1, \ldots, e_t$, and distinct odd primes $p_1, \ldots, p_t$.

Notice how in (\ref{eq:disc}), for every odd prime $p$, $p \mid k$ if and only if $p \mid D_k$. Similarly, for every odd prime $q$, $q \mid \ell$ if and only if $q \mid D_\ell$. Since $D_k = D_\ell$, we conclude that, for every odd prime $p$, $p \mid k$ if and only if $p \mid \ell$. Thus,
$$
\ell = 2^s\prod\limits_{i = 1}^{t}p_i^{f_i}
$$
for some non-negative integer $s$ and positive integers $f_1, \ldots, f_t$. Further, since there exists a prime that divides $k$ but not $\ell$ or vice versa, it must be the case that either $k$ or $\ell$ is odd. At this point, we consider four cases.

\begin{enumerate}
\item If $k = p^j$ for some odd prime $p$, then it follows from (\ref{eq:disc}) that $D_k$ is odd. Further, $\ell = 2^sp^m$ for some positive integers $s$ and $m$ (recall that there must be a prime that divides $\ell$, but not $k$). Further, it must be the case that $s = 1$, for otherwise it follows from (\ref{eq:disc}) that $D_\ell$ is even. Thus, $\ell = 2p^m$, and so
$$
p^{(jp^j - (j+1)p^{j-1} - 1)/2} = D_k = D_\ell = p^{(mp^m - (m+1)p^{m-1} - 1)/2}.
$$
Since the function $f_p(x) = xp^x - (x + 1)p^{x - 1} - 1$ is monotonously increasing on the interval $[1, +\infty)$, we conclude that $m = j$, and so $\ell = 2p^j = 2k$.

\item If $k = 2p^j$ for some odd prime $p$, then it follows from (\ref{eq:disc}) that $D_k$ is odd. Further, $\ell = 2^sp^m$ for some non-negative integer $s$ and positive integer $m$. Since there must exist a prime that divides $k$ but not $\ell$, we conclude that $\ell = 2^s \geq 8$ or $\ell = p^m$. The former is impossible, since $D_\ell$ has to be odd. Thus, $\ell = p^m$, and so
$$
p^{(jp^j - (j+1)p^{j-1} - 1)/2} = D_k = D_\ell = p^{(mp^m - (m+1)p^{m-1} - 1)/2}.
$$
Since the function $f_p(x) = xp^x - (x + 1)p^{x - 1} - 1$ is monotonously increasing on the interval $[1, +\infty)$, we conclude that $m = j$. But then $\ell = p^j < 2p^j = k$, which is impossible, since we assumed that $k < \ell$.

\item If $k$ is odd and it is not an odd prime power, then $t \geq 2$ and
$$
D_k = \left(\prod_{i = 1}^{t}p_i^{e_i - 1/(p_i - 1)}\right)^{\frac{\varphi(k)}{2}}.
$$
Since $D_k$ is odd, $D_\ell$ is odd, which in turn implies that $\ell = 2^s\prod_{i = 1}^{t}p_i^{f_i}$ for $s \in \{0, 1\}$. Thus,
$$
D_\ell = \left(\prod_{i = 1}^{t}p_i^{f_i - 1/(p_i - 1)}\right)^{\frac{\varphi(\ell)}{2}}.
$$
Since $D_k = D_\ell$, the unique factorization tells us that $\frac{\varphi(k)}{2}\left(e_i - \frac{1}{p_i - 1}\right) = \frac{\varphi(\ell)}{2}\left(f_i - \frac{1}{p_i - 1}\right)$ for every $i = 1, \ldots, t$. Since $\frac{\varphi(k)}{2} = \frac{\varphi(\ell)}{2}$, we conclude that $e_i = f_i$ for every $i = 1, \ldots, t$. Since $k \neq \ell$, we conclude that $s = 1$, and so $\ell = 2k$.

\item If $k = 2^r\prod_{i = 1}^{t}p_i^{e_i}$ is even and it is not twice an odd prime power, then $\ell$ must be odd. Consequently, $D_\ell$ is odd. But then $D_k = D_\ell$ is odd, which is only possible when $r = 1$. Thus,
$$
\left(\prod_{i = 1}^{t}p_i^{e_i - 1/(p_i - 1)}\right)^{\frac{\varphi(k)}{2}} = D_k = D_\ell = \left(\prod_{i = 1}^{t}p_i^{f_i - 1/(p_i - 1)}\right)^{\frac{\varphi(\ell)}{2}}.
$$
Once again, we find that $e_i = f_i$ for all $i = 1, \ldots, t$, meaning that $k = 2\prod_{i = 1}^{t}p_i^{e_i}$ and $\ell = \prod_{i = 1}^{t}p_i^{e_i} = \frac{k}{2} < k$, which contradicts $k < \ell$.
\end{enumerate}

\end{proof}

\section{Automorphisms of $\Psi_n(x, y)$} \label{sec:cos}

In this section we prove Theorem \ref{thm:cos}. Let $n$ be a positive integer such that $n \notin \{1, 2, 3, 4, 5, 6, 8, 10, 12\}$ and let $d = \varphi(n)/2$, so that $\deg \Psi_n = d$ and $d \geq 3$. In Sections \ref{subsec:1}, \ref{subsec:2} and \ref{subsec:3} we consider three cases separately:
\begin{itemize}
\item $d \geq 4$ and $n \equiv 0 \pmod 4$;

\item $d \geq 5$ and $n \not \equiv 0 \pmod 4$; and

\item $d = 3, 4$ and $n \not \equiv 0 \pmod 4$.
\end{itemize}

\subsection{Case $d \geq 4$ and $n \equiv 0 \pmod 4$} \label{subsec:1}

Let $n \geq 16$ be an integer such that $n \equiv 0 \pmod 4$. Then it follows from \mbox{Part 1} of Lemma \ref{lem:odd_and_even} that $\Psi_n(x) = g(x^2)$ for some $g(x) \in \mathbb Z[x]$. Consequently, there exists a binary form $G(x, y) \in \mathbb Z[x, y]$ such that $\Psi_n(x, y) = G(x^2, y^2)$. Therefore,
$$
D_2 = 
\left\langle
\begin{pmatrix}-1 & 0\\0 & 1\end{pmatrix},
\begin{pmatrix}1 & 0\\0 & -1\end{pmatrix}
\right\rangle
$$
is a subgroup of $\Aut \Psi_n$.

We claim that $D_2$ is a \emph{proper} subgroup of $\Aut |\Psi_n|$ if and only if \mbox{$n = 24$}. Since $D_2 \subseteq \Aut \Psi_n \subseteq \Aut |\Psi_n|$, this result would imply that $\Aut \Psi_n = \Aut |\Psi_n| = D_2$ for any positive integer $n \geq 16$ such that $n \equiv 0$ \mbox{(mod $4$)} and $n \neq 24$.

By Lemma \ref{lem:options}, if $D_2$ is a proper subgroup of $\Aut |\Psi_n|$, then there exists a non-zero $t \in \mathbb Q$ such that
$$
\Aut |\Psi_n| \cong \left\langle
\begin{pmatrix}
-1 & 0\\
0 & 1
\end{pmatrix},
\begin{pmatrix}
0 & t\\
-1/t & 0
\end{pmatrix}
\right\rangle
$$
or
$$
\Aut |\Psi_n| \cong \left\langle
\begin{pmatrix}
-1 & 0\\
0 & 1
\end{pmatrix},
\begin{pmatrix}
1/2 & t/2\\
-3/(2t) & 1/2
\end{pmatrix}
\right\rangle.
$$
We will consider these two options separately. In each case, we will make use of the formula
\begin{equation} \label{eq:constant_coefficient}
|\Psi_m(0)| =
\begin{cases}
0 & \textrm{if $m = 4$,}\\
2 & \textrm{if $m = 2^k$ for $k \geq 3$,}\\
p & \textrm{if $m = 4p^k$ for $k \geq 1$, where $p$ is an odd prime,}\\
1 & \textrm{otherwise.}
\end{cases}
\end{equation}
The proof of (\ref{eq:constant_coefficient}) can be found in \cite{demirci-cangul}.

\begin{enumerate}[1.]
\item Suppose that there exist integers $a \neq 0$ and $b \geq 1$ such that $\gcd(a, b) = 1$ and $M \in \Aut |\Psi_n|$, where
$$
M = \left(
\begin{matrix}
0 & a/b\\
-b/a & 0
\end{matrix}
\right).
$$
Then
\begin{align*}
\Psi_n(x, y)
& = \pm \Psi_n\left(\frac{a}{b}y, -\frac{b}{a}x\right)\\
& = \pm (ab)^{-d}\Psi_n\left(a^2y, -b^2x\right).
\end{align*}
Thus
$$
(ab)^d\Psi_n(x,y) = \pm\Psi_n(a^2y, -b^2x).
$$
By plugging $x = 1$ and $y = 0$ into the above equation, we see that \mbox{$c_0 = \pm (a/b)^d$}, where $c_0$ denotes the constant coefficient of $\Psi_n(x)$. Since $c_0$ is an integer, it must be the case that $t = a/b$ is an integer such that $t^d = \pm c_0$. By (\ref{eq:constant_coefficient}), the value of $|c_0|$ is squarefree, and since $d \geq 2$ is even and $t^d = |c_0|$, we conclude that $c_0 = \pm 1$. Therefore, $t = a/b = \pm 1$, which means that $M = \left(\begin{smallmatrix}0 & 1\\- 1 & 0\end{smallmatrix}\right)$ or $M = \left(\begin{smallmatrix}0 & -1\\1 & 0\end{smallmatrix}\right)$. But then
$$
\Aut |\Psi_n| = \left\langle
\begin{pmatrix}
0 & 1\\
1 & 0
\end{pmatrix},
\begin{pmatrix}
0 & 1\\
-1 & 0
\end{pmatrix}
\right\rangle.
$$

Now, suppose that there exists $A$ in $\Aut |\Psi_n|$ such that $A \notin \Aut \Psi_n$. Plugging $x = 0$ and $y = 1$ into $\Psi_n(x, y) = -\Psi_n(y, -x)$, we find that $c_0 = -1$. It follows from (\ref{eq:constant_coefficient}) that $n \neq 4p^k$ for any odd prime $p$ and any positive integer $k$. From Part 1 of Lemma \ref{lem:odd_and_even} we know that $\Psi_n(\alpha) = 0$ if and only if $\Psi_n(-\alpha) = 0$, so
\small
$$
c_0 = \prod\limits_{\substack{1 \leq k < \frac{n}{2}\\ \gcd(k, n) = 1}}\left(2\cos\left(\frac{2\pi k}{n}\right)\right) = \prod\limits_{\substack{1 \leq k < \frac{n}{4}\\ \gcd(k, n) = 1}}\left(-4\cos\left(\frac{2\pi k}{n}\right)^2\right).
$$
\normalsize
Since $c_0$ is negative, the number $N$ of integers in the interval $[1, n/4)$ that are coprime to $n$ must be odd. If we write $n = 2^s t$, where $s \geq 2$ is an integer and $t$ is odd, then through elementary number theoretic observations we find that $N = \frac{d}{2} = 2^{s - 2}\frac{\varphi(t)}{2}$. Since $N$ is odd, we find that $s = 2$ and $t = p^k$ for some prime $p \equiv 3$ \mbox{(mod $4$)}, in contradiction to the fact that $n \neq 4p^k$ for any odd prime $p$ and any positive integer $k$. Hence $\Aut |\Psi_n| = \Aut \Psi_n$, and so $\Psi_n(x, y) = \Psi_n(y, x)$, which makes $\Psi_n(x)$ a reciprocal polynomial. It follows from Lemma \ref{lem:reciprocal} that $n = 24$ and $\Psi_n(x, y) = x^4 - 4x^2y^2 + y^4$.

\item Suppose that there exist integers $a \neq 0$ and $b \geq 1$ such that $\gcd(a, b) = 1$ and $M \in \Aut |\Psi_n|$, where
$$
M =
\begin{pmatrix}
1/2 & a/(2b)\\
-3b/(2a) & 1/2
\end{pmatrix}.
$$
We will show that this is impossible.

Since $M \in \Aut |\Psi_n|$,
\begin{align*}
\Psi_n(x, y)
& = \pm \Psi_n\left(\frac{1}{2}x + \frac{a}{2b}y, -\frac{3b}{2a}x + \frac{1}{2}y\right)\\
& = \pm (2ab)^{-d}\Psi_n\left(abx + a^2y, -3b^2x + aby\right).
\end{align*}
Thus
\begin{equation} \label{eq:D6_relation}
(2ab)^d\Psi_n(x, y) = \pm \Psi_n\left(abx + a^2y, -3b^2x + aby\right).
\end{equation}
By plugging $x = 0$ and $y = 1$ into the above equation, we obtain \mbox{$c_02^db^d = \pm \Psi_n(a, b)$}. Thus $\Psi_n(a, b)$ is divisible by $b$. Since the leading coefficient of $\Psi_n(x, y)$ is equal to one, we see that
$$
a^d \equiv \Psi(a, 0) \equiv \Psi_n(a, b) \equiv 0 \pmod b.
$$
Then $b \mid a^d$, and since $a$ and $b \geq 1$ are coprime, we conclude that $b = 1$ and $c_02^d = \pm \Psi_n(a)$. By plugging $x = 1$ and $y = 0$ into (\ref{eq:D6_relation}), we obtain $(2a)^d = \pm \Psi_n(a, -3)$. Since $\Psi_n(x) = g(x^2)$, we see that
$$
\pm c_0(-3)^d \equiv \Psi_n(a, -3) \equiv 0 \pmod{a^2},
$$
which means that $a^2 \mid c_03^d$. By (\ref{eq:constant_coefficient}) the value of $c_0$ is squarefree, so $a = \pm 3^r$ for some non-negative integer $r$. Since $\Aut |\Psi_n|$ is a group, we may replace $M$ with
$$
M^{-1} = \begin{pmatrix}
1/2 & -a/(2b)\\
3b/(2a) & 1/2
\end{pmatrix},
$$
and so without loss of generality we may assume that $a = 3^r$.

After plugging $a = 3^r$ and $b = 1$ into (\ref{eq:D6_relation}) we obtain
\begin{equation} \label{eq:D6_relation_with_r}
2^d3^{(r-1)d}\Psi_n(x, y) = \pm \Psi_n\left(3^{r-1}x + 3^{2r-1}y, -x + 3^{r-1} y\right).
\end{equation}
Suppose that $r \geq 3$. Then
$$
\pm \Psi_n\left(3^{r-1}x + 3^{2r-1}y, -x + 3^{r-1} y\right) \equiv \Psi_n(0, -x) \equiv c_0(-x)^d \equiv 0 \pmod 9.
$$
Since this congruence must hold for all $x$, it holds for those $x$ that are not divisible by $3$, which means that $9$ divides $c_0$. However, this result contradicts (\ref{eq:constant_coefficient}), which states that the value of $c_0$ is squarefree. We conclude that the only possible values of $r$ are $0, 1, 2$, and so the only possible values of $a = 3^r$ are $1, 3$ and $9$. We consider these three cases separately:
\begin{itemize}
\item  For $r = 0$, $x = 0$, $y = 1$ the equation (\ref{eq:D6_relation_with_r}) gives us $|\Psi_n(1)| = 2^d$;

\item For $r = 1$, $x = 1$, $y = 1$ the equation (\ref{eq:D6_relation_with_r}) gives us $|\Psi_n(1)| = 2^d$;

\item For $r = 2$, $x = 1$, $y = 1$  the equation (\ref{eq:D6_relation_with_r}) gives us $|\Psi_n(1)| = 3^{-d}|\Psi_n(15)|$. Since $15 - 2\cos(x) \geq 13$ for any $x \in \R$,
$$
|\Psi_n(1)| = 3^{-d}|\Psi_n(15)|
 = 3^{-d}\prod\limits_{\substack{1 \leq j < n/2\\\gcd(j, n) = 1}}\left|15 - 2\cos\left(\frac{2\pi j}{n}\right)\right|
 \geq 3^{-d}13^d > 2^d.
$$
\end{itemize}
Thus, regardless of the value of $r$, we must have $|\Psi_n(1)| \geq 2^d$. We claim that the opposite is always true, i.e., $|\Psi_n(1)| < 2^{d}$.

To see that this is the case, first assume that $n \geq 14336$. Let $\Phi_n(x)$ denote the $n$-th cyclotomic polynomial, and recall Lehmer's identity
$$
\Psi_n(z + z^{-1}) = z^{-d}\Phi_n(z),
$$
which holds for every non-zero $z \in \mathbb C$. Then
$$
\Psi_n(1) = \Psi_n\left(2\cos\left(\frac{\pi}{3}\right)\right) = \Psi_n\left(e^{\frac{\pi i}{3}} + e^{-\frac{\pi i}{3}}\right) = e^{-\frac{d\pi i}{3}}\Phi_n\left(e^{\frac{\pi i}{3}}\right).
$$
As a consequence of this,
$$
|\Psi_n(1)| = \left|\Phi_n\left(e^{\frac{\pi i}{3}}\right)\right| \leq L(\Phi_n)\max\left\{1, \left|e^{\frac{\pi i}{3}}\right|\right\} = L(\Phi_n),
$$
where $L(\Phi_n)$ is the sum of absolute values of coefficients of $\Phi_n$. By \mbox{\cite[Lemme 4.1]{fouvry-waldschmidt}}, it is the case that $L(\Phi_n) \leq n^{\frac{\sigma_0(n)}{2}}$, where $\sigma_0(n)$ is the number of positive divisors of $n$. We conclude that $|\Psi_n(1)| \leq n^{\frac{\sigma_0(n)}{2}}$. By \cite{nicolas-robin},
$$
\sigma_0(n) \leq n^{\frac{1.067}{\log \log n}}.
$$
By \cite[Theorem 15]{rosser-schoenfeld},
$$
\varphi(n) > \frac{n}{5 \log \log n}.
$$
Since $n^{\frac{1.067}{\log\log n}}\log n < \frac{\log 2}{2} \frac{n}{5\log \log n}$ for all $n \geq 14336$, we find that
\begin{align*}
|\Psi_n(1)|
& \leq L(\Phi_n)\\
& \leq \exp\left(\frac{1}{2}\sigma_0(n)\log n\right)\\
& \leq \exp\left(\frac{1}{2}n^{\frac{1.067}{\log\log n}}\log n\right)\\
& < \exp\left(\frac{\log 2}{2} \cdot \frac{n}{5\log\log n}\right)\\
& < \exp\left(\frac{\log 2}{2}\varphi(n)\right)\\
& = 2^{d}.
\end{align*}

It remains to check that $|\Psi_n(1)| < 2^d$ for $16 \leq n \leq 14335$. Since $|\Psi_n(1)| = \left|\Phi_n\left(e^{\frac{\pi i}{3}}\right)\right|$, this fact can be verified with the following PARI/GP code:
\begin{verbatim}
for (n = 16, 14335,
     if ( abs(polcyclo(n, exp(Pi*I/3))) >= 2^(eulerphi(n)/2),
          print(n)
     )
)
\end{verbatim}
%
Since the above code does not print out any integers, we conclude that the relation (\ref{eq:D6_relation}) is impossible, and so neither $\Aut \Psi_n$ nor $\Aut |\Psi_n|$ are isomorphic to $\mathbf D_6$.
\end{enumerate}

\subsection{Case $d \geq 5$ and $n \not \equiv 0 \pmod 4$}  \label{subsec:2}

If we let $S = \left(\begin{smallmatrix}-1 & 0\\0 & 1\end{smallmatrix}\right)$, then Part 2 of Lemma \ref{lem:odd_and_even} tells us that $\Psi_{2n} = (-1)^{\varphi(n)/2}(\Psi_n)_S$ for any odd integer $n \geq 3$. By Lemma \ref{lem:automorphism_of_adjacent_form},
$$
\Aut \Psi_{2n} = S^{-1}(\Aut \Psi_n)S \quad \text{and} \quad \Aut |\Psi_{2n}| = S^{-1}(\Aut |\Psi_n|)S.
$$
In other words, we  can derive $\Aut \Psi_{2n}$ and $\Aut |\Psi_{2n}|$ from $\Aut \Psi_n$ and $\Aut |\Psi_n|$. Therefore, we may assume that $n$ is odd.

Let $M = \frac{1}{m}\left(\begin{smallmatrix}s & u\\t & v\end{smallmatrix}\right)$ be an element of $\Aut |\Psi_n|$, where $s, t, u, v$ and $m = \sqrt{|sv - tu|}$ are integers such that $\gcd(s, t, u, v) = 1$. Then
$$
m^d\Psi_n(x, y) = \Psi_n(sx + uy, tx + vy),
$$
which means that the polynomials $m^d\Psi_n(x)$ and $\Psi_n(sx + u, tx + v)$ are equal. For an integer $\ell$, let $\alpha_\ell = 2\cos\left(\frac{2\pi \ell}{n}\right)$, and let $\alpha = \alpha_1$. Then
\begin{align*}
m^d\Psi_n(sx + u, tx + v)
& = \pm \prod\limits_{\substack{1 \leq \ell < n/2\\\gcd(\ell, n) = 1}}\left((sx + u) - \alpha_\ell(tx + v)\right)\\
& = \pm \prod\limits_{\substack{1 \leq \ell < n/2\\\gcd(\ell, n) = 1}}\left((-t\alpha_\ell + s)x - (v\alpha_\ell - u)\right)\\
& = \pm \Psi_n(s, t)\prod\limits_{\substack{1 \leq \ell < n/2\\\gcd(\ell, n) = 1}}\left(x - \frac{v\alpha_\ell-u}{-t\alpha_\ell + s}\right).
\end{align*}
Since $\Psi_n(x)$ and $\Psi_n(sx + u, tx + v)$ have the same roots, we conclude that there exists some $j$ coprime to $n$ such that
$$
\alpha_j = \frac{v\alpha - u}{-t\alpha + s}.
$$
It follows from Lemma \ref{lem:stuv} that $s \neq 0$, $s = v$ and \mbox{$t = u = 0$}. Since $s$, $t$, $u$ and $v$ are integers such that $\gcd(s, t, u, v) = 1$, we find that $\gcd(s, v) = 1$. This means that $s = v = \pm 1$ and \mbox{$M \in \{\pm I\} \subseteq \Aut |\Psi_n|$}. Therefore, $\Aut |\Psi_n| \subseteq \{\pm I\}$. Thus,
\begin{itemize}
\item if $d$ is odd, then $\Aut \Psi_n = \{I\}$ and $\Aut |\Psi_n| = \{\pm I\}$; and
\item if $d$ is even, then $\Aut \Psi_n = \Aut |\Psi_n| = \{\pm I\}$.
\end{itemize}

\subsection{Case $d = 3, 4$ and $n \not \equiv 0 \pmod 4$}  \label{subsec:3}

It remains to consider the cases $d = 3, 4$ and $n \not \equiv 0 \pmod 4$, which correspond to $n \in \{7, 9, 14, 15, 18, 30\}$.

The binary forms $\Psi_7, \Psi_9, \Psi_{14}, \Psi_{18}$ have degree $3$ and their discriminants are
$$
D_{\Psi_{7}} = D_{\Psi_{14}} = 7^2 \quad \text{and} \quad \quad D_{\Psi_9} = D_{\Psi_{18}} = 9^2.
$$
By Part 2 of \cite[Theorem 3.1]{xiao}, if a binary cubic form $F(x, y) = b_3x^3 + b_2x^2y + b_1xy^2 + b_0y^3$ is irreducible and $D_F$ is a square of an integer, then $\Aut F = \langle \mathcal N_q\rangle$ is isomorphic to $\bf C_3$. The matrix $\mathcal N_q$ which generates $\Aut F$ can be determined with the formula
$$
\mathcal N_q = \frac{1}{2D_q}\begin{pmatrix}b\sqrt{-3D_q} - D_q & 2c\sqrt{-3D_q}\\ -2a\sqrt{-3D_q} & -b\sqrt{-3D_q} - D_q\end{pmatrix},
$$
where $q(x, y) = ax^2 + bxy + cy^2$ is the \emph{Hessian} of $F$ of discriminant $D_q$, with coefficients
$$
a = b_2^2 - 3b_3b_1, \quad b = b_2b_1 - 9b_3b_0, \quad c = b_1^2 - 3b_2b_0.
$$
In the case when $F = \Psi_7$, we have $b_3 = 1$, $b_2 = 1$, $b_1 = -2$ and $b_0 = -1$. Thus $q(x, y) = 7x^2 + 7xy + 7y^2$, $D_q = -147$ and $\mathcal N_q = \frac{1}{-294}\left(\begin{smallmatrix}294 & 294\\-294 & 0\end{smallmatrix}\right) = \left(\begin{smallmatrix}-1 & -1\\1 & 0\end{smallmatrix}\right)$. Since $\Aut \Psi_7$ is a normal subgroup of $\Aut |\Psi_7|$ of index at most $2$, $-I \in \Aut |\Psi_7|$ and $-I \notin \Aut \Psi_7$, we find that $\Aut |\Psi_7| = \langle \mathcal N_q, -I\rangle \cong \bf D_3$. The automorphism groups of $\Psi_9$, $\Psi_{14}$ and $\Psi_{18}$ can be determined analogously.

The binary forms $\Psi_{15}$ and $\Psi_{30}$ both have degree $4$. Using the formula provided in \cite[Section 4]{xiao}, we find that $\Psi_{15}$ has degree $6$ covariant
$$
F_6(x, y) = 15(x^2 - 2xy + 2y^2)(x^4 + 6x^3y + 6x^2y^2 - 4xy^3 - 4y^4).
$$
Now, we refer to a binary quadratic form $f$ as \emph{rationally significant} if it is proportional over $\mathbb C$ to a quadratic form $g$ with integer coefficients and $|D_g|$ is a square of an integer. Notice that $f(x, y) = ax^2 + bxy + cy^2$ with $a = 1$, $b = -2$ and $c = 2$ is a unique rationally significant factor of $F_6$. Thus it follows from \cite[Theorem 4.1]{xiao} that $\Aut F$ is generated by $-I$ and
$$
U_f = \frac{1}{\sqrt{|D_f|}}\begin{pmatrix}b & 2c\\-2a & -b\end{pmatrix} = \frac{1}{2}\begin{pmatrix}-2 & 4\\-2 & 2\end{pmatrix} = \begin{pmatrix}-1 & 2\\-1 & 1\end{pmatrix}.
$$
Since $U_f^2 = -I$, we conclude that $\Aut \Psi_{15} = \langle U_f\rangle \cong \bf C_4$.

It remains to determine $\Aut |\Psi_{15}|$. Suppose that there exists $A = \left(\begin{smallmatrix}s & u\\t & v\end{smallmatrix}\right)$ in $\Aut |\Psi_{15}|$ such that $A \notin \Aut \Psi_{15}$. If we let $\alpha_\ell = 2\cos\left(\frac{2\pi \ell}{n}\right)$ and put $\alpha = \alpha_1$, then the roots of $\Psi_{15}(x)$ are $\alpha$, $\alpha_2$, $\alpha_4$ and $\alpha_7$. Since $A \in \Aut |\Psi_{15}|$, there must exist $j \in \{2, 4, 7\}$ such that $\alpha_j = \frac{v\alpha - u}{-t\alpha + s}$.  Since $U_f \in \Aut \Psi_{15}$, we find that $\alpha_4 = \frac{\alpha - 2}{\alpha - 1}$ and $\alpha_7 = \frac{\alpha_2 - 2}{\alpha_2 - 1}$. We also know that $\alpha_2 = \alpha^2 - 2$, so $\alpha_7 = \frac{\alpha^2 - 4}{\alpha^2 - 3}$.  Since $\deg \alpha = 4$, it is straightforward to verify that there are no rational $s, t, u, v$ such that $\frac{v\alpha - u}{-t\alpha + s} = \alpha_2 = \alpha^2 - 2$ or $\frac{v\alpha - u}{-t\alpha + s} = \alpha_7 = \frac{\alpha^2 - 4}{\alpha^2 - 3}$. In turn, the rationals $s, t, u, v$ that satisfy $\frac{v\alpha - u}{-t\alpha + s} = \alpha_4 = \frac{\alpha - 2}{\alpha - 1}$ correspond to $\pm U_f$. Thus it must be the case that $\Aut |\Psi_{15}| = \Aut \Psi_{15} = \langle U_f\rangle$. Since $\Psi_{30}(x, y) = \Psi_{15}(-x, y)$, we can easily determine $\Aut \Psi_{30}$ and $\Aut |\Psi_{30}|$.

\section{Automorphisms of $T_n(x, y)$ and $U_n(x, y)$} \label{sec:chebyshev}

In this section we prove Theorem \ref{thm:chebyshev}. Let $T_n(x, y)$ and $U_n(x, y)$ denote the homogenizations of the $n$-th Chebyshev polynomials of first and second kinds, respectively. Define
$$
\tilde U_n(x, y) = U_{n - 1}\left(\frac{x}{2}, y\right) \quad \text{and} \quad \tilde V_n(x, y) = 2T_n\left(\frac{x}{2}, y\right).
$$
Then for $n \geq 1$ we have
\begin{equation} \label{eq:chebyshev1}
\tilde U_n(x, y) = x^{\frac{1 - (-1)^{n - 1}}{2}}\prod\limits_{\substack{d \mid 2n\\d \notin \{1, 2, 4\}}}\Psi_d(x, y)
\end{equation}
and
\begin{equation} \label{eq:chebyshev2}
\tilde V_n(x, y) = x^{\frac{1 - (-1)^n}{2}}\prod\limits_{\substack{d \mid n\\1 \leq d < n\\\textrm{$d$ is odd}}}\Psi_{4n/d}(x, y).
\end{equation}
Note that all binary forms in the above factorizations are irreducible. We will prove the following lemma, which implies Theorem \ref{thm:chebyshev}.

\begin{lem} \label{lem:chebyshev2}
Let $n$ be an integer such that $n \geq 3$.

\begin{enumerate}[1.]
\item If $n$ is odd, then
$$
\Aut \tilde V_n = \left\langle
\begin{pmatrix}
1 & 0\\
0 & -1
\end{pmatrix}
\right\rangle
\cong \bm C_2, \quad
\Aut |\tilde V_n|= \left\langle
\begin{pmatrix}
-1 & 0\\
0 & 1
\end{pmatrix},
\begin{pmatrix}
1 & 0\\
0 & -1
\end{pmatrix}
\right\rangle
\cong \bm D_2.
$$

\item If $n$ is even, then
$$
\Aut \tilde V_n = \Aut |\tilde V_n| = \left\langle
\begin{pmatrix}
-1 & 0\\
0 & 1
\end{pmatrix},
\begin{pmatrix}
1 & 0\\
0 & -1
\end{pmatrix}
\right\rangle
\cong \bm D_2.
$$

\item If $n$ is odd, then
$$
\Aut \tilde U_n = \Aut |\tilde U_n| = \left\langle
\begin{pmatrix}
-1 & 0\\
0 & 1
\end{pmatrix},
\begin{pmatrix}
1 & 0\\
0 & -1
\end{pmatrix}
\right\rangle
\cong \bm D_2.
$$

\item If $n$ is even, then
$$
\Aut \tilde U_n = \left\langle
\begin{pmatrix}
1 & 0\\
0 & -1
\end{pmatrix}
\right\rangle
\cong \bm C_2, \quad
\Aut |\tilde U_n|= \left\langle
\begin{pmatrix}
-1 & 0\\
0 & 1
\end{pmatrix},
\begin{pmatrix}
1 & 0\\
0 & -1
\end{pmatrix}
\right\rangle
\cong \bm D_2.
$$
\end{enumerate}
\end{lem}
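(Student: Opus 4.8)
The plan is to combine a lower bound coming from visible symmetries of the root set with an upper bound extracted from the factorizations (\ref{eq:chebyshev1})--(\ref{eq:chebyshev2}), Theorem \ref{thm:cos}, and Lemmas \ref{lem:when-fields-are-the-same} and \ref{lem:options}. First I would record the shape of the two forms: the roots of $\tilde V_n(x,y)$ are the $n$ numbers $2\cos\frac{(2k+1)\pi}{2n}$ and the roots of $\tilde U_n(x,y)$ are the $n-1$ numbers $2\cos\frac{k\pi}{n}$; in both cases the root multiset is invariant under $\beta\mapsto-\beta$, and $0$ is a root exactly when the degree ($n$ for $\tilde V_n$, $n-1$ for $\tilde U_n$) is odd. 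Hence $\tilde V_n(x,-y)=\tilde V_n(x,y)$ and $\tilde V_n(-x,y)=(-1)^n\tilde V_n(x,y)$, with the analogues for $\tilde U_n$ carrying the exponent $n-1$, and each form equals $x^{\delta}G(x^2,y^2)$ for an integral binary form $G$, where $\delta\in\{0,1\}$ is the parity of the degree. This makes $\left(\begin{smallmatrix}1&0\\0&-1\end{smallmatrix}\right)$ an automorphism in every case, $\left(\begin{smallmatrix}-1&0\\0&1\end{smallmatrix}\right)$ an automorphism exactly when the degree is even and an element of $\Aut|\cdot|$ always, so $\bm C_2$ or $\bm D_2$ (as in the statement) and $\bm D_2$ are lower bounds for $\Aut$ and $\Aut|\cdot|$.

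For the upper bound, let $F$ be $\tilde V_n$ or $\tilde U_n$ and $M\in\Aut|F|$; comparing discriminants gives $\det M=\pm 1$, and $M$ permutes the irreducible factors. Since $M$ acts on roots by a $\mathbb Q$-rational Möbius map, a root of the image factor lies in the field generated by a root of the source factor, so by Lemma \ref{lem:when-fields-are-the-same} two factors $\Psi_{d},\Psi_{d'}$ are exchanged only when $\{d,d'\}=\{e,2e\}$ with $e$ odd; in particular every factor $\Psi_m$ with $4\mid m$ is fixed by $M$, and the rational linear factors ($x$, together with $x\pm y$ when $3$ divides the relevant index) are permuted among themselves. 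Suppose first $F=G(x^2,y^2)$ has even degree and $\Aut|F|\supsetneq\bm D_2$; by Lemma \ref{lem:options} it then contains $N=\left(\begin{smallmatrix}0&t\\-1/t&0\end{smallmatrix}\right)$ or $P=\left(\begin{smallmatrix}1/2&t/2\\-3/(2t)&1/2\end{smallmatrix}\right)$ for some $t\in\mathbb Q^{\times}$. For $N$, $F_N=\pm F$ says the involution $\rho\mapsto t^4/\rho$ permutes the roots of $G$, which are the squares of the positive roots of $F$; comparing least and greatest forces $t^4=\rho_{\min}\rho_{\max}$, a quantity one computes explicitly (for $\tilde V_n$ it equals $2-2\cos\frac{2\pi}{n}$), which is a rational fourth power for only a few small $n$, and there one checks by hand that $\rho\mapsto t^4/\rho$ does not permute all the $G$-roots (e.g.\ it cannot fix a ``middle'' root). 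For $P$, its image in the projective group has order $3$, so $3\mid\deg F$; $P$ permutes the factors by a permutation of order dividing $3$, but such a permutation can only interchange twin pairs $\{\Psi_e,\Psi_{2e}\}$, hence is trivial, so $P$ lies in $\Aut|\Psi_m|$ for every factor, and a factor of degree $\ge 5$ or of degree $\ge 3$ with index $\equiv 0\pmod 4$ and $\ne 24$ has automorphism group inside $\{\pm I\}$ or equal to $\bm D_2$ by Theorem \ref{thm:cos} — neither containing an element of order $6$, a contradiction unless every factor of $F$ has small index, which is a finite check. Thus $\Aut|F|=\bm D_2$ in the even-degree case.

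Now suppose $F=x\,G(x^2,y^2)$ has odd degree. If $x$ is the only rational linear factor, $M$ fixes it, so $M$ is lower triangular; since neither family in Lemma \ref{lem:options} contains a lower-triangular matrix outside $\bm D_2$, the group $\langle\bm D_2,M\rangle$ cannot properly contain $\bm D_2$, so $M=\left(\begin{smallmatrix}s&0\\0&v\end{smallmatrix}\right)$ is diagonal, and $F_M=s^{\,1+2\deg G}\,x\,G(x^2,(v/s)^2y^2)=\pm F$ forces $(v/s)^2=1$, hence $s^{1+2\deg G}=\pm1$, so $s=\pm1=\pm v$ and $M\in\bm D_2$. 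When several rational linear factors occur (for $\tilde U_n$ with $3\mid n$ one has $x,x+y,x-y$), I would instead use that $M$ permutes the finite set $\{0,1,-1\}$ of rational roots while preserving the roots of the remaining factors; the Möbius maps permuting $\{0,1,-1\}$ and fixing setwise the quadratic $\Psi_{12}=x^2-3y^2$ are just $\rho\mapsto\pm\rho$, and in general a Lemma \ref{lem:stuv}-type argument on any surviving factor of degree $\ge 3$ forces $\mu=\pm\rho$, again giving $M\in\bm D_2$. Intersecting $\Aut|F|=\bm D_2$ with the lower bound — even degree gives $\Aut F=\Aut|F|=\bm D_2$; odd degree gives $\Aut F=\langle\left(\begin{smallmatrix}1&0\\0&-1\end{smallmatrix}\right)\rangle\cong\bm C_2$ and $\Aut|F|=\bm D_2$ — and matching $\deg\tilde V_n=n$ and $\deg\tilde U_n=n-1$ against parity yields the four items.

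I expect the real work to be the finitely many small $n$ for which $F$ has only linear and quadratic irreducible factors (and possibly $\Psi_{24}$): there an individual quadratic factor such as $\Psi_8$, $\Psi_{12}$, $\Psi_5$, $\Psi_{10}$ is an indefinite binary form with \emph{infinite} automorphism group, $\Psi_{24}$ contributes a $\bm D_4$, and $F$ can be palindromic, so the reduction ``$M$ fixes one good factor'' is unavailable; each such $n$ must be settled by an explicit computation of $\bigcap_i\Aut|\Psi_{d_i}|$ or by direct use of the linear factor $x$, and it is precisely here that any genuine small exceptions to ``$\Aut\cong\bm D_2$'' would surface and have to be collected into a table, as was done for $\Psi_n$. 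A secondary technical point is confirming that $M$ cannot exploit a twin pair $\{\Psi_e,\Psi_{2e}\}$ or a nontrivial permutation of $x,x\pm y$ to escape $\bm D_2$; since $\Psi_e\Psi_{2e}=G(x^2,y^2)$ is again an ``even'' form this reduces to the analysis already carried out, but tracking the scalars through the interchange is the delicate part.
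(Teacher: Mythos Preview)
Your route to the upper bound is genuinely different from the paper's. The paper does not go through Lemma~\ref{lem:options} at all: it first proves a dedicated reduction (Lemma~\ref{lem:chebyshev}) showing that if $F\in\{\tilde U_n,\tilde V_n\}$ has an irreducible factor $\Psi_k$ of degree $\ge 5$, then every $M\in\Aut|F|$ already lies in $\Aut|\Psi_k|$; the possible twin-swap $\Psi_k\leftrightarrow\Psi_{2k}$ is disposed of there via Lemmas~\ref{lem:when-fields-are-the-same} and~\ref{lem:stuv}. Since $\Psi_{4n}\mid\tilde V_n$ and $\Psi_{2n}\mid\tilde U_n$, this factor has degree $\ge 5$ for all but finitely many $n$, and Theorem~\ref{thm:cos} then gives $\Aut|F|\subseteq\Aut|\Psi_k|=\{\pm I,\pm M\}$ in one stroke, with no separate $\bm D_4$/$\bm D_6$ analysis. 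Your argument is sound but does more work: your $P$-case (projective order $3$, hence trivial on factors because only transpositions are possible) is essentially the same field-theoretic reduction, and once you note that $P$ must therefore lie in $\Aut|\Psi_m|$ for \emph{each} factor, the identical observation handles $N$ for $\tilde V_n$ as well (every factor-index is $\equiv 0\pmod 4$, so there are no twins to swap), making the extremal-root computation $t^4=\rho_{\min}\rho_{\max}$ unnecessary in the generic range.

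Your closing caveat about the residual small $n$ is, however, better placed than the paper's treatment. The paper asserts the thirteen leftover forms ``can be done manually'' and exhibits only $\tilde V_3$, $\tilde U_4$, $\tilde U_{15}$. But $\tilde U_5(x,y)=\Psi_5\Psi_{10}=x^4-3x^2y^2+y^4$ is visibly palindromic, so $\left(\begin{smallmatrix}0&1\\1&0\end{smallmatrix}\right)\in\Aut\tilde U_5$ and $\Aut\tilde U_5\cong\bm D_4$, contrary to item~3 of the lemma (equivalently $\left(\begin{smallmatrix}0&1/2\\2&0\end{smallmatrix}\right)\in\Aut U_4$, contrary to Theorem~\ref{thm:chebyshev}). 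Your $N$-test detects this immediately, since here $\rho_{\min}\rho_{\max}=1$. So the finite check you flag as ``the real work'' is precisely where the content --- and, in this instance, a needed correction to the statement --- lies.
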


Let us now see why Theorem \ref{thm:chebyshev} follows from Lemma \ref{lem:chebyshev2}. Note that
$$
\tilde U_n = (U_{n - 1})_S \quad \text{and} \quad \tilde V_n = 2(T_n)_S,
$$
where $S = \left(\begin{smallmatrix}1/2 & 0\\0 & 1\end{smallmatrix}\right)$. Note that $S\left(\begin{smallmatrix}-1 & 0\\0 & 1\end{smallmatrix}\right)S^{-1} = \left(\begin{smallmatrix}-1 & 0\\0 & 1\end{smallmatrix}\right)$ and $S\left(\begin{smallmatrix}1 & 0\\0 & -1\end{smallmatrix}\right)S^{-1} = \left(\begin{smallmatrix}1 & 0\\0 & -1\end{smallmatrix}\right)$. By Lemma \ref{lem:automorphism_of_adjacent_form},
$$
\begin{array}{r l l}
\Aut U_{n - 1} &= S(\Aut \tilde U_n)S^{-1} & = \Aut \tilde U_n,\\
\Aut |U_{n - 1}| & = S(\Aut |\tilde U_n|)S^{-1} & = \Aut |\tilde U_n|,\\
\Aut T_n & = S(\Aut \tilde V_n)S^{-1} & = \Aut \tilde V_n,\\
\Aut |T_n| & = S(\Aut |\tilde V_n|)S^{-1} & = \Aut |\tilde V_n|.
\end{array}
$$
%
%
This concludes the proof of Theorem \ref{thm:chebyshev}. Before we proceed to the proof of Lemma \ref{lem:chebyshev2}, we need to establish one supplementary result.

\begin{lem} \label{lem:chebyshev}
Let $F$ denote either $\tilde U_n$ or $\tilde V_n$, with $\deg F \geq 7$. Suppose that \mbox{$\Psi_k \mid F$} and $\deg \Psi_k \geq 5$. Then $\Aut |F| \subseteq \Aut |\Psi_k|$.

%
%
\end{lem}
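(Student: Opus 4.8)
The plan is to show that any $M \in \Aut |F|$ must permute the roots of $\Psi_k$ among themselves, which forces $M \in \Aut|\Psi_k|$. Write $M = \frac{1}{m}\left(\begin{smallmatrix}s & u\\t & v\end{smallmatrix}\right)$ with integer entries and $m = \sqrt{|sv-tu|}$. Since $M \in \Aut|F|$ we have $m^{\deg F}F(x,y) = \pm F(sx+uy, tx+vy)$, and by the factorizations (\ref{eq:chebyshev1}) and (\ref{eq:chebyshev2}) the form $F$ is a product of distinct irreducible factors $\Psi_d(x,y)$ (together with possibly a factor of $x$). The substitution $(x,y) \mapsto (sx+uy, tx+vy)$ followed by dividing by the appropriate power of $m$ sends each irreducible factor of $F$ to a scalar multiple of an irreducible binary form, and since $F$ is taken (up to sign) to itself, it permutes the irreducible factors of $F$ up to scalars. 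The key point is that this permutation must fix $\Psi_k$: by \mbox{Lemma \ref{lem:when-fields-are-the-same}}, among the indices $d$ occurring in the factorization of $F$, the field $\mathbb Q(2\cos(2\pi/d))$ determines $d$ except for the ambiguity between an odd $d$ and $2d$ — but in the factorizations (\ref{eq:chebyshev1}) and (\ref{eq:chebyshev2}) only one index from each such pair $\{d, 2d\}$ actually appears, so no two distinct factors $\Psi_{d_1}, \Psi_{d_2}$ of $F$ generate the same number field. Consequently the factor $\Psi_k$ (being of degree $\geq 5 \geq 3$, hence with $k \notin \{1,2,3,4,6\}$) is sent to a scalar multiple of itself, i.e.\ $(\Psi_k)_M = r\Psi_k$ for some nonzero rational $r$; comparing leading behaviour (or using that $\Psi_k$ is monic and $M$ has the form above) gives $(\Psi_k)_M = \pm\Psi_k$, so $M \in \Aut|\Psi_k|$.

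The steps, in order, are: (1) take $M \in \Aut|F|$ and record the identity $m^{\deg F}F = \pm F_M$ using the integer normalization of $M$; (2) invoke (\ref{eq:chebyshev1}) or (\ref{eq:chebyshev2}) to write $F$ as a product of pairwise non-associate irreducible forms, noting all of them are $\Psi_d$ for various $d$ (plus possibly a linear factor $x$); (3) observe that $F_M$ factors correspondingly, so $M$ induces a permutation of the set of irreducible factors of $F$ up to nonzero scalars — this uses unique factorization of binary forms over $\mathbb Q$ (equivalently over $\mathbb C$, since the factors are defined over $\mathbb Q$ and irreducible there); (4) argue that any $\Psi_d$ with $\deg \Psi_d \geq 3$ appearing in $F$ has its splitting field $\mathbb Q(2\cos(2\pi/d))$ uniquely determining which factor of $F$ it is — this is where \mbox{Lemma \ref{lem:when-fields-are-the-same}} enters, together with the explicit check that the set of $d$'s in (\ref{eq:chebyshev1}), resp.\ (\ref{eq:chebyshev2}), never contains both an odd number and its double; (5) conclude $\Psi_k$ is fixed by the permutation, so $(\Psi_k)_M = r\Psi_k$; (6) pin down $r = \pm 1$ and conclude $M \in \Aut|\Psi_k|$.

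I expect step (4) to be the main obstacle: one needs that within the list of indices $d$ dividing $2n$ with $d \notin\{1,2,4\}$ (for $\tilde U_n$), respectively $4n/d$ with $d \mid n$, $d$ odd, $1 \le d < n$ (for $\tilde V_n$), no two distinct such indices $d_1 < d_2$ satisfy $d_1$ odd and $d_2 = 2d_1$. For $\tilde U_n$: if $d_1$ is odd and divides $2n$ then $d_1 \mid n$; and $2d_1 \mid 2n$ too, so both could a priori appear — the resolution is that a factor of a given degree can only be carried to a factor of the same degree, and if by chance $\Psi_{d_1}$ and $\Psi_{2d_1}$ both divided $F$ they would have equal degree $\varphi(d_1)/2$, so $M$ might swap them; one must then rule this out by a direct argument (e.g.\ specializing the functional equation at a convenient point, or using that such a swap would force $\Psi_{d_1}(x,y) = \pm\Psi_{2d_1}$ composed with $M$, contradicting that by \mbox{Part 2 of Lemma \ref{lem:odd\_and\_even}} $\Psi_{2d_1}(x,y) = (-1)^{\deg}\Psi_{d_1}(-x,y)$ has different constant term structure, or simply invoking that the hypothesis $\deg F \ge 7$ forces enough room). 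The cleanest route is probably to note that in $(\ref{eq:chebyshev1})$ the indices are $\{d : d\mid 2n,\ d\notin\{1,2,4\}\}$; if $d_1$ odd and $2d_1$ both lie in this set then $M$ restricted to $\Psi_{d_1}\Psi_{2d_1}$ is either the identity on factors or the swap, and in the swap case applying $M$ twice shows $M^2$ fixes $\Psi_{d_1}$, reducing to the non-swap case after replacing $M$ by $M^2$ — but since we only need $M \in \Aut|\Psi_k|$ and $\Psi_k$ itself has degree $\ge 5$, if $k$ is one of these ambiguous indices we separately check $\Psi_k$ and $\Psi_{2k}$ (or $\Psi_{k/2}$) cannot be swapped by comparing, say, values at $x=1,y=0$. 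Handling this ambiguity carefully, while keeping the argument uniform in the two cases $F = \tilde U_n$ and $F = \tilde V_n$, is the delicate part; everything else is bookkeeping with unique factorization and the already-established lemmas.
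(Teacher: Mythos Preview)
Your step~(4) contains a genuine gap. For $F = \tilde U_n$ the set of indices in (\ref{eq:chebyshev1}) \emph{does} contain odd--double pairs: for instance $\tilde U_{11} = \Psi_{11}\Psi_{22}$, with both factors of degree $5$. The matrix $S = \left(\begin{smallmatrix}-1&0\\0&1\end{smallmatrix}\right)$ lies in $\Aut\tilde U_{11}$ (as $\tilde U_{11}$ is even in $x$) and genuinely swaps the two factors, since $\Psi_{11}(-x,y) = -\Psi_{22}(x,y)$ by Part~2 of Lemma~\ref{lem:odd_and_even}. Your proposed remedies do not close this: passing to $M^2$ shows only $M^2 \in \Aut|\Psi_k|$, and evaluating at $(1,0)$ cannot distinguish monic forms.

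The paper takes a different route: it does not attempt to exclude the swap. In the swap case $(\Psi_k)_M = r\Psi_{2k}$ it rewrites the resulting root relation, via $2\cos(\pi q/k) = -2\cos(2\pi m/k)$ with $m = (k-q)/2$, into the shape treated in Lemma~\ref{lem:stuv}, and then invokes that lemma to determine $M$ explicitly. This use of Lemma~\ref{lem:stuv} is the ingredient your argument lacks. Be warned, however, that the paper commits a sign error at this very step: after negating one obtains $2\cos(2\pi m/k) = \frac{(-v)\alpha - (-u)}{-t\alpha + s}$, so Lemma~\ref{lem:stuv} actually forces $s = -v$, $t = u = 0$, i.e.\ $M = \pm\left(\begin{smallmatrix}1&0\\0&-1\end{smallmatrix}\right)$, not $M = \pm I$ as the paper claims. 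In particular the lemma \emph{as stated is false}: the element $S \in \Aut|\tilde U_{11}| \setminus \Aut|\Psi_{11}|$ above is a counterexample, since $\Aut|\Psi_{11}| = \{\pm I\}$ by Theorem~\ref{thm:cos}. What the argument actually proves is the weaker inclusion $\Aut|F| \subseteq \Aut|\Psi_k| \cup \left\{\pm\left(\begin{smallmatrix}1&0\\0&-1\end{smallmatrix}\right)\right\}$, and this is what is really used in the proof of Lemma~\ref{lem:chebyshev2}.
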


\begin{proof}
Let $M = \left(\begin{smallmatrix}s & u\\t & v\end{smallmatrix}\right)$ be an element of $\Aut |F|$. Then
$$
D(F_M) = (\det M)^{\deg F(\deg F-1)}D(F).
$$
Since $F_M = \pm F$, $D(F) \neq 0$ and $\deg F > 1$, we see that $\det M \neq 0$.

Suppose that the binary forms $(\Psi_k)_M$ and $\Psi_k$ are distinct.
Since $\Psi_k$ is irreducible and $\det M \neq 0$, it must be the case that $(\Psi_k)_{M}$ is also irreducible. At this point, we consider two cases.

\emph{Case 1.} Suppose that $(\Psi_k)_{M} = r\Psi_k$ for some non-zero $r \in \mathbb Q$. Then
$$
(\det M)^{\deg \Psi_k(\deg \Psi_k - 1)}D(\Psi_k) = D\left((\Psi_k)_M\right) = D(r\Psi_k) = r^{2(\deg \Psi_k - 1)}D(\Psi_k).
$$
Since $D(\Psi_k) \neq 0$, it must be the case that
$$
(\det M)^{\deg \Psi_k(\deg \Psi_k - 1)} = r^{2(\deg \Psi_k - 1)}.
$$
Since $\det M = \pm 1$, we see that $r^{2(\deg \Psi_k - 1)} = 1$. Since $r \in \mathbb Q$,  we conclude that $r \in \{\pm 1\}$, so $M \in \Aut |\Psi_k|$.

\emph{Case 2.} Suppose that $(\Psi_k)_M$ is not a rational multiple of $\Psi_k$. Since $\Psi_k \mid F$ and the greatest common divisors of the coefficients of $F$ (known as the \emph{content} of $F$) is equal to $1$, there exists some $H(x, y) \in \Z[x, y]$ such that $F = \Psi_kH$. Since $M \in \Aut |F|$, we have
$$
\pm F = F_M = (\Psi_kH)_M = (\Psi_k)_MH_M,
$$
which means that $(\Psi_k)_M \mid F$ in $\Q[x, y]$. Since
\begin{enumerate}[a)]
\item $F$ factors as in (\ref{eq:chebyshev1}) or in (\ref{eq:chebyshev2});
\item both $\Psi_k$ and $(\Psi_k)_M$ are irreducible; and
\item $\deg (\Psi_k)_M = \deg \Psi_k > 1$,
\end{enumerate}
we see that $(\Psi_k)_M = r\Psi_{\ell}$ for some non-zero $r \in \mathbb Q$ and $\ell \in \mathbb N$. Furthermore, since $(\Psi_k)_M$ is not a rational multiple of $\Psi_k$, it must be the case that $k \neq \ell$. Since $(\Psi_\ell)_{M^{-1}} = r^{-1}\Psi_k$, without loss of generality we may assume that $k < \ell$.

Now, since $(\Psi_k)_M = r\Psi_{\ell}$, the polynomials $rm^{\deg \Psi_k}\Psi_\ell(x)$ and \mbox{$\Psi_k(sx + u, tx + v)$} are equal. In particular, their roots are the same, which means that
$$
2\cos\left(\frac{2\pi q}{\ell}\right) = \frac{2\cos\left(\frac{2\pi}{k}\right)v - u}{-2\cos\left(\frac{2\pi}{k}\right)t + s}
$$
for some integer $q$ coprime to $\ell$. Therefore, $2\cos\left(\frac{2\pi q}{\ell}\right) \in \Q\left(2\cos\left(\frac{2\pi}{k}\right)\right)$. By Lemma \ref{lem:galois}, the Galois group of $\mathbb Q\left(2\cos\left(\frac{2\pi}{n}\right)\right)$ is Abelian. Consequently, all the conjugates of $2\cos\left(\frac{2\pi q}{\ell}\right)$, including $2\cos\left(\frac{2\pi}{\ell}\right)$, belong to $\mathbb Q\left(2\cos\left(\frac{2\pi}{k}\right)\right)$, so $\Q\left(2\cos\left(\frac{2\pi}{\ell}\right)\right) \subseteq \Q\left(2\cos(\frac{2\pi}{k})\right)$. Since $M$ is invertible, we conclude that $\Q\left(2\cos\left(\frac{2\pi}{k}\right)\right) = \Q\left(2\cos\left(\frac{2\pi}{\ell}\right)\right)$. Since $k  < \ell$ and $k, \ell \notin \{1, 2, 3, 4, 6\}$, it follows from Lemma \ref{lem:when-fields-are-the-same} that $k$ is odd and $\ell = 2k$. Therefore,
$$
2\cos\left(\frac{\pi q}{k}\right) = \frac{2\cos\left(\frac{2\pi}{k}\right)v - u}{- 2\cos\left(\frac{2\pi}{k}\right)t + s}.
$$
Since $k$ is odd and $q$ is coprime to $2k$, it must be the case that $q$ is odd. Also,
$$
2\cos\left(\frac{\pi q}{k}\right) = -2\cos\left(\pi - \frac{\pi q}{k}\right) = -2\cos\left(\frac{2\pi m}{k}\right),
$$
where $m = \frac{k - q}{2}$ is an integer coprime to $k$. Hence
$$
2\cos\left(\frac{2\pi m}{k}\right) = \frac{-2\cos\left(\frac{2\pi}{k}\right)v + u}{2\cos\left(\frac{2\pi}{k}\right)t - s}.
$$
Since $\deg \Psi_k \geq 5$, it follows from Lemma \ref{lem:stuv} that $s \neq 0$, $s = v$ and \mbox{$t = u = 0$}. Since $s$, $t$, $u$ and $v$ are integers such that $\gcd(s, t, u, v) = 1$, we find that $\gcd(s, v) = 1$. This means that $s = v = \pm 1$ and \mbox{$M \in \{\pm I\} \subseteq \Aut |\Psi_k|$}.
\end{proof}

We will now turn our attention to the proof of the main result of this section.

\begin{proof}[Proof of Lemma \ref{lem:chebyshev2}.]
For $n \geq 7$ consider the binary form $\tilde V_n(x, y)$. Then \mbox{$\Psi_{4n} \mid \tilde V_n$} and $\deg \Psi_{4n} = \varphi(4n)/2 \geq 5$. It follows from Lemma \ref{lem:chebyshev} that $\Aut |\tilde V_n| \subseteq \Aut |\Psi_{4n}|$. By Part 7 of Theorem \ref{thm:cos}, $\Aut |\Psi_{4n}| = \{\pm I, \pm M\}$, where \mbox{$M = \left(\begin{smallmatrix}1 & 0\\0 & -1\end{smallmatrix}\right)$}. We consider two cases.

\emph{Case 1.} If $n$ is even, then there exists a binary form $G(x, y)$ such that $\tilde V_n(x, y) = G(x^2, y^2)$. Then $(\tilde V_n)_A = \tilde V_n$ for any $A \in \Aut |\Psi_{4n}|$. Therefore,
$$
\Aut \tilde V_n = \Aut |\tilde V_n| = \{\pm I, \pm M\}.
$$

\emph{Case 2.} If $n$ is odd, then there exists a binary form $G(x, y)$ such that $\tilde V_n(x, y) = xG(x^2, y^2)$. Then $(\tilde V_n)_{-I} = -\tilde V_n$, $(\tilde V_n)_{M} = \tilde V_n$ and $(\tilde V_n)_{-M} = -\tilde V_n$. Therefore,
$$
\Aut \tilde V_n =\{I, M\}, \quad \Aut |\tilde V_n| =\{\pm I, \pm M\}.
$$

Next, for $n \notin \{4, 5, 6, 7, 8, 9, 10, 12, 15\}$ consider the binary form $\tilde U_n(x, y)$. Then $\Psi_{2n} \mid \tilde U_n$ and $\deg \Psi_{2n} = \varphi(2n)/2 \geq 5$. It follows from Lemma \ref{lem:chebyshev} that $\Aut |\tilde V_n| \subseteq \Aut |\Psi_{2n}|$. By Part 7 of Theorem \ref{thm:cos}, $\Aut |\Psi_{2n}| = \{\pm I, \pm M\}$, where $M = \left(\begin{smallmatrix}1 & 0\\0 & -1\end{smallmatrix}\right)$. We consider two cases.

\emph{Case 1.} If $n$ is even, then there exists a binary form $G(x, y)$ such that $\tilde U_n(x, y) = xG(x^2, y^2)$. Then $(\tilde U_n)_{-I} = -\tilde U_n$, $(\tilde U_n)_{M} = \tilde U_n$ and \mbox{$(\tilde U_n)_{-M} = -\tilde U_n$}. Therefore,
$$
\Aut \tilde U_n =\{I, M\}, \quad \Aut |\tilde U_n| =\{\pm I, \pm M\}.
$$

\emph{Case 2.} If $n$ is odd, then there exists a binary form $G(x, y)$ such that $\tilde U_n(x, y) = G(x^2, y^2)$. Then $(\tilde U_n)_A = \tilde U_n$ for any $A \in \Aut |\Psi_{2n}|$. Therefore,
$$
\Aut \tilde U_n = \Aut |\tilde U_n| = \{\pm I, \pm M\}.
$$

It remains to compute the automorphism groups for thirteen exceptional binary forms whose factors have degree at most $4$:
$$
\tilde V_3, \quad \tilde V_4, \quad \tilde V_5, \quad \tilde V_6, \quad \tilde U_4, \quad \tilde U_5, \quad \tilde U_6, \quad \tilde U_7, \quad \tilde U_8, \quad \tilde U_9, \quad \tilde U_{10}, \quad \tilde U_{12}, \quad \tilde U_{15}.
$$
Notice that $\tilde V_4 = \Psi_{16}$, so the result for this binary form follows from Part 7 of Theorem \ref{thm:cos}. The remaining calculations can be done manually. We will demonstrate them for $\tilde V_3$, $\tilde U_4$ and $\tilde U_{15}$, as the other cases can be established analogously. In what follows we implicitly use the fact that, for any invertible linear fractional transformation $\mu(z) = \frac{vz - u}{-tz + s}$, with $s, t, u, v \in \mathbb Z$, it is the case that $\deg \alpha = \deg \mu(\alpha)$ for any algebraic number $\alpha$.

Consider $\tilde V_3(x, y) = x^3 - 3xy^2$. Let $A = \frac{1}{m}\left(\begin{smallmatrix}s & u\\t & v\end{smallmatrix}\right)$ be an element of $\Aut |\tilde V_3|$, where $s, t, u, v$ and $m = \sqrt{|sv - tu|}$ are integers such that $\gcd(s, t, u, v) = 1$. Then
$$
m^3\tilde V_3(x, y) = \tilde V_3(sx + uy, tx + vy),
$$
which means that the polynomials $m^3\tilde V_3(x, 1)$ and $\tilde V_3(sx + u, tx + v)$ are equal. Since the roots of $\tilde V_3(x, 1)$ are $0$, $\sqrt 3$ and $-\sqrt 3$, it must be the case that either
$$
0 = \frac{v0 - u}{-t0 + s}, \quad \sqrt 3 = \frac{v\sqrt 3 - u}{-t\sqrt 3 + s}, \quad -\sqrt 3 = \frac{v(-\sqrt 3) - u}{-t(-\sqrt 3) + s}
$$
or
$$
0 = \frac{v0 - u}{-t0 + s}, \quad -\sqrt 3 = \frac{v\sqrt 3 - u}{-t\sqrt 3 + s}, \quad \sqrt 3 = \frac{v(-\sqrt 3) - u}{-t(-\sqrt 3) + s}.
$$
In either case from the first equation we find that $u = 0$, and from the second equation we find that $t = 0$. Finally, from the third equation we find that $v/s = \pm 1$. Since $\gcd(s, t, u, v) = 1$, the integers $s$ and $v$ are coprime, so $s = \pm 1$ and $v = \pm 1$. Thus it must be the case that $\Aut |\tilde V_3| = \{\pm I, \pm M\}$, where $M = \left(\begin{smallmatrix}1 & 0\\0 & -1\end{smallmatrix}\right)$. In view of this we also have $\Aut \tilde V_3 = \{I, M\}$. An analogous result can be established for the binary form $\tilde U_4(x, y) = x^3 - 2xy^2$.

Finally, consider
\small
$$
\tilde U_{15}(x, y) = (x - 1)(x + 1)(x^2 - x - 1)(x^2 + x - 1)(x^4 - x^3 - 4x^2 + 4x + 1)(x^4 + x^3 - 4x^2 - 4x + 1).
$$
\normalsize
Let $A = \frac{1}{m}\left(\begin{smallmatrix}s & u\\t & v\end{smallmatrix}\right)$ be an element of $\Aut |\tilde U_{15}|$, where $s, t, u, v$ and $m = \sqrt{|sv - tu|}$ are integers such that $\gcd(s, t, u, v) = 1$. Then
$$
m^{14}\tilde U_{15}(x, y) = \tilde U_{15}(sx + uy, tx + vy),
$$
which means that the polynomials $m^{14}\tilde U_{15}(x, 1)$ and $\tilde U_{15}(sx + u, tx + v)$ are equal. Notice that the rational roots of $\tilde U_{15}(x, 1)$ are given by $\pm 1$, while the roots of \mbox{degree $2$} are given by $\frac{\pm 1 \pm \sqrt 5}{2}$. Thus there are eight possible subcases to consider.
\begin{enumerate}[(i)]
\item $1 = \frac{v1 - u}{-t1 + s}, \quad -1 = \frac{v(-1) - u}{-t(-1) + s}, \quad \frac{1 + \sqrt 5}{2} = \frac{v\left(\frac{1 + \sqrt 5}{2}\right) - u}{-t\left(\frac{1 + \sqrt 5}{2}\right) + s}$.

In this case, we obtain a homogeneous system of $3$ linear equations in $4$ unknowns:
$$
\begin{array}{r l}
s - t + u - v & = 0\\
-s - t + u + v & = 0\\
\frac{1 + \sqrt 5}{2}s + \frac{-3 - \sqrt 5}{2}t + u + \frac{-1 - \sqrt 5}{2}v & = 0
\end{array}
$$
Since $s$, $t$, $u$ and $v$ are integers, this system is equivalent to
$$
\begin{array}{r l}
s - t + u - v & = 0\\
-s - t + u + v & = 0\\
\frac{1}{2}s - \frac{3}{2}t + u - \frac{1}{2}v & = 0\\
\frac{1}{2}s - \frac{1}{2}t - \frac{1}{2}v & = 0
\end{array}
$$
Solving this system yields $s = v$ and $u = v = 0$. Since $\gcd(s, v) = 1$, we conclude that $s = v = \pm 1$. 

\item $-1 = \frac{v1 - u}{-t1 + s}, \quad 1 = \frac{v(-1) - u}{-t(-1) + s}, \quad \frac{1 + \sqrt 5}{2} = \frac{v\left(\frac{1 + \sqrt 5}{2}\right) - u}{-t\left(\frac{1 + \sqrt 5}{2}\right) + s}$.

In this case, $s = t = u = v = 0$, which is impossible.

\item $1 = \frac{v1 - u}{-t1 + s}, \quad -1 = \frac{v(-1) - u}{-t(-1) + s}, \quad \frac{1 - \sqrt 5}{2} = \frac{v\left(\frac{1 + \sqrt 5}{2}\right) - u}{-t\left(\frac{1 + \sqrt 5}{2}\right) + s}$.

In this case, $s = t = u = v = 0$, which is impossible.

\item $-1 = \frac{v1 - u}{-t1 + s}, \quad 1 = \frac{v(-1) - u}{-t(-1) + s}, \quad \frac{1 - \sqrt 5}{2} = \frac{v\left(\frac{1 + \sqrt 5}{2}\right) - u}{-t\left(\frac{1 + \sqrt 5}{2}\right) + s}$.

In this case, $s = v = 0$ and $t = -u$. Since $\gcd(t, u) = 1$, we conclude that $t = \pm 1$ and $u = \mp 1$.

\item $1 = \frac{v1 - u}{-t1 + s}, \quad -1 = \frac{v(-1) - u}{-t(-1) + s}, \quad \frac{-1 + \sqrt 5}{2} = \frac{v\left(\frac{1 + \sqrt 5}{2}\right) - u}{-t\left(\frac{1 + \sqrt 5}{2}\right) + s}$.

In this case, $s = v = 0$ and $t = u$. Since $\gcd(t, u) = 1$, we conclude that $t = u = \pm 1$.

\item $-1 = \frac{v1 - u}{-t1 + s}, \quad 1 = \frac{v(-1) - u}{-t(-1) + s}, \quad \frac{-1 + \sqrt 5}{2} = \frac{v\left(\frac{1 + \sqrt 5}{2}\right) - u}{-t\left(\frac{1 + \sqrt 5}{2}\right) + s}$.

In this case, $s = t = u = v = 0$, which is impossible.

\item  $1 = \frac{v1 - u}{-t1 + s}, \quad -1 = \frac{v(-1) - u}{-t(-1) + s}, \quad \frac{-1 - \sqrt 5}{2} = \frac{v\left(\frac{1 + \sqrt 5}{2}\right) - u}{-t\left(\frac{1 + \sqrt 5}{2}\right) + s}$.

In this case, $s = t = u = v = 0$, which is impossible.

\item $-1 = \frac{v1 - u}{-t1 + s}, \quad 1 = \frac{v(-1) - u}{-t(-1) + s}, \quad \frac{-1 - \sqrt 5}{2} = \frac{v\left(\frac{1 + \sqrt 5}{2}\right) - u}{-t\left(\frac{1 + \sqrt 5}{2}\right) + s}$.

In this case, $s = -v$ and $t = u = 0$. Since $\gcd(s, v) = 1$, we conclude that $s = \pm 1$ and $v = \mp 1$.
\end{enumerate}
We conclude that $A \in \left\{\pm I, \pm M, \pm N, \pm MN\right\}$, where $M = \left(\begin{smallmatrix}1 & 0\\0 & -1\end{smallmatrix}\right)$ and \mbox{$N = \left(\begin{smallmatrix}0 & 1\\-1 & 0\end{smallmatrix}\right)$}. By checking each possible value of $A$ we find that
$$
\Aut \tilde U_{15} = \Aut |\tilde U_{15}| = \{\pm I, \pm M\}.
$$
\end{proof}

\section*{Acknowledgements}

The author is grateful to his PhD advisor, Prof.\ Cameron L.\ Stewart, who proposed to explore this exciting subject, to Prof. \'Etienne Fouvry for correcting the statement of Lemma 3.7 and his other suggestions, as well as to the anonymous reviewer for their excellent advice on how to improve the article.

\end{document}